\documentclass[11pt]{amsart}

\usepackage{hyperref}

\setlength{\textwidth}{16cm} \setlength{\textheight}{21.5cm}
\setlength{\oddsidemargin}{0.0cm}
\setlength{\evensidemargin}{0.0cm}
\def\R{{\mathbb {R}}}
\def\N{{\mathbb {N}}}

\def\F{{\mathcal {F}}}

\def\diver{\operatorname {\mathrm{div}}}
\def\supp{\operatorname {\mathrm{supp}}}
\def\pint{\operatorname {--\!\!\!\!\!\int\!\!\!\!\!--}}

\def\ve{\varepsilon}

\newtheorem{teo}{Theorem}[section]
\newtheorem{lema}[teo]{Lemma}

\newtheorem{corol}[teo]{Corollary}

\theoremstyle{remark}
\newtheorem{remark}[teo]{Remark}

\theoremstyle{definition}
\newtheorem{defi}[teo]{Definition}

\numberwithin{equation}{section}

\parskip 3pt

\title[Concentration-Compactness for Orlicz spaces]{The concentration-compactness principle for Orlicz spaces and applications}
\author[J. Fern\'andez Bonder and A. Silva]{Juli\'an Fern\'andez Bonder and Anal\'{\i}a Silva}

\address{Juli\'an Fern\'andez Bonder \hfill\break\indent
Instituto de C\'alculo (UBA - CONICET) and \hfill\break\indent
Departamento  de Matem\'atica, FCEyN, Universidad de Buenos Aires, \hfill\break\indent 
Pabell\'on I, Ciudad Universitaria (1428), Buenos Aires, Argentina.}

\email{{\tt jfbonder@dm.uba.ar}\hfill\break\indent {\it Web page:} {\tt http://mate.dm.uba.ar/$\sim$jfbonder}}

\address{Anal\'ia Silva \hfill\break\indent Departamento de Matem\'atica, FCFMyN, Universidad Nacional de San Luis and\hfill\break\indent 
Instituto de Matem\'atica Aplicada San Luis (IMASL), UNSL - CONICET.\hfill\break\indent 
Ejercito de los Andes 950, D5700HHW, San Luis, Argentina.}

\email{{\tt acsilva@unsl.edu.ar}\hfill\break\indent {\it Web page:} {\tt analiasilva.weebly.com}}

\subjclass[2020]{35J62; 46E30}


\keywords{Concentration-compactness principle; Orlicz spaces}

\begin{document}

\begin{abstract}
In this paper we extend the well-known concentration -- compactness principle of P.L. Lions to Orlicz spaces. As an application we show an existence result to some critical elliptic problem with nonstandard growth. 
\end{abstract}

\maketitle

\section{Introduction}
The study of Orlicz and Orlicz-Sobolev spaces is a subject that has a long history in analysis since the begginning of the 1930s starting with the work of Orlicz himself and the famous Polish school.

These spaces apear naturally in several applications in physics and engineering  when one need to deal with the so-called {\em nonstandard growth} differential equations. Some prototypical examples of such problems are equations of the form
\begin{equation}\label{eq.intro}
-\Delta_a u := -\diver\left(\frac{a(|\nabla u|)}{|\nabla u|}\nabla u\right) = f,
\end{equation}
posed in a domain $\Omega\subset\R^n$ and complemented with some boundary conditions.

The study of these problems is connected to Orlicz and Orlicz-Sobolev spaces since the natural space for solutions is $W^{1,A}(\Omega)$ where $A'(t)=a(t)$.

The regularity problem for \eqref{eq.intro} was analyzed in the classical work of Lieberman \cite{Lieberman} where it is shown, under adequate assumptions on $A$ and $f$, that bounded solutions are H\"older continuous.

The existence problem of \eqref{eq.intro} is related to the growth of the source term $f$ and therefore related to the integrability properties of functions in $W^{1,A}(\Omega)$ and for that purpose it is extremely relevant the study of the Sobolev immersions for these spaces. Namely, what is needed is to find all Young functions $B$ (see next section for precise definitions) such that
\begin{equation}\label{immersion.intro}
W^{1,A}(\Omega)\subset L^B(\Omega).
\end{equation}

As far as we know, the first article that treated this problem was \cite{Trudinger} and then it was refined in \cite{Cianchi} where the author finds the optimal Young function such that \eqref{immersion.intro} holds. In \cite{Cianchi} this optimal Young function is denoted by $A_n$ (it depends only on $A$ and $n$) and it is shown that \eqref{immersion.intro} holds if and only if $B\le A_n$ (in the sense of Young funcions). Moreover, if $B\ll A_n$ then the immersion \eqref{immersion.intro} is compact. This {\em critical} Young function $A_n$ has a precise formula given in \eqref{An}.

This compactness property of \eqref{immersion.intro} for $B\ll A_n$ is crucial for the existence problem of \eqref{eq.intro}. Namely, if $F'(t)=f(t)$ and $|F(t)|\le B(t)$ with $B\ll A_n$ then the standard variational methods (under adequate assumptions both on $A$ and $F$) can yield existence results for
$$
-\Delta_a u = f(u).
$$
However, for {\em critical-type} problems, where $F\sim A_n$ the existence problem becomes much more delicate.

In the classical setting (when the differential operator is the Laplace operator), this problem is very much related with the so-called Yamabe problem in differential geometry and the literature is so vast that is impossible to give an extensive review in this short introduction.

One extremely important tool to deal with such critical problems was developed by P.L. Lions in his famous article \cite{Lions}. P.L. Lions developed what is called as the {\em concentration-compactness principle} that consists in analyze the lack of compactness for bounded sequences in $W^{1,p}(\Omega)$. What Lions proved is that for bounded domains $\Omega$ the only possibility is the appearance of {\em concentration points}.

The concentration-compactness principle has been proved to be an extremely powerful tool and has been used by several authors in too many different problems and also it has been generalized to different settings. See \cite{Bonder-Saintier-Silva2, Bonder-Saintier-Silva, Bonder-Silva, Fu, Squassina} and references therein.

The main point of this article is therefore to generalized Lions' concentration-compactness principle to the context of Orlicz spaces. We want to remark that a first step in this direction was done in \cite{FIN}, where the concentration estimates were obtained by using global bounds on the Orlicz function $A$. Here we refine these results and get sharper estimates on the concentration of blowing-up sequences that depend only on the behavior of $A$ at infinity.

Then, as an application of the method, we give a proof of existence of solutions to
\begin{equation}\label{eq.a}
\begin{cases}
-\Delta_a u = a_n(u)+ \lambda f(u) & \text{in }\Omega\\
u=0 & \text{on }\partial\Omega,
\end{cases}
\end{equation}
where $f$ is a subcritical nonlinearity in the sense that $|F(t)|\le B(t)$ with $B\ll A_n$ (here $A_n'=a_n$). We want to remark that a very similar application was already analyze in \cite{FIN}.

\subsection*{Organization of the paper}
In section 2, we give a review of Young functions and Orlicz and Orlicz-Sobolev spaces that are needed in the course of the arguments. There are no new results there so any expert in the subject can safely skip this section.

In section 3, we prove some preliminary technical lemmas and in section 4 we prove our main result (Theorem \ref{ccp}) where we extend Lions' concentration-compactness principle to the Orlicz setting.

Finally, in section 5 we apply Theorem \ref{ccp} to obtain some existence result for \eqref{eq.a}.

\section{Young functions and Orlicz and Orlicz-Sobolev spaces: a review}

This section is devoted to give a very short overview of some known results on Young functions and Orlicz and Orlicz-Sobolev spaces that will be needed in the rest of the paper. There are no new results in this section so if the reader is familiar with the topic, he or she can skip the section and go directly to section 3. An excellent source for these topics is the classical book \cite{Krasnoselski}.

\subsection{Young functions}
Let us begin with the definition of a Young function.

\begin{defi}
A function $A\colon \R_+\to \R_+$ is said to be a Young function if it has the form
$$
A(t)=\int_0^t a(\tau)\,d\tau, \qquad t\geq 0,
$$
where $a: [0,\infty) \to [0,\infty)$ has the following properties:
\begin{itemize}
\item[(i)] $a(0)=0$,
\item[(ii)] $a(s)>0$ for $s>0$,
\item[(iii)] $a$ is right continuous at any point $s\geq 0$,
\item[(iv)] $a$ is nondecreasing on $(0,\infty)$.
\end{itemize}
\end{defi}

Associated to any Young function $A$ one define its {\em complementary function} (or Legendre conjugate) of $A$.
\begin{defi}
Let $A$ be a Young function, we define its complementary function $\tilde A$ as
$$
\tilde{A}(s): = \sup_{t\geq0}\{st-A(t)\}
$$
\end{defi}

Observe that, by definition, $\tilde A$ is the optimal function in Young's inequality
$$
st\le A(t) + \tilde A(s).
$$
It is a known fact that $\tilde A$ is also a Young function. Moreover, $\tilde A$ is given by
$$
\tilde A(s) = \int_0^s \tilde a(\tau)\, d\tau,
$$
where $\tilde a$ is the generalized inverse of $a$.

\bigskip

We need the notion of comparison between Young functions.
\begin{defi}
Given two Young functions $A$ and $B$, we say that $A\le B$ if there exists a constant $c>0$ and $t_0>0$ such that $A(t)\le B(ct)$, for every  $t\ge t_0$.

Whenever $A\le B$ and $B\le A$ we say that $A$ and $B$ are equivalent Young functions and this fact will be denoted by $A\sim B$.

Finally, we say that $B$ is essentially larger than $A$, denoted by $A\ll B$, if for any $c>0$,
$$
\lim_{t\to\infty} \frac{A(ct)}{B(t)}=0.
$$
\end{defi}

A very important and useful property is the so-called $\Delta_2-$condition. We recall this concept in the next definition.

\begin{defi}
We say that a Young function $A$ satisfies the $\Delta_2-$condition if
$$
A(2t)\leq C A(t)
$$
for all $t\geq 0$ for a fixed positive constant $C>1$.
\end{defi}

The next lemma is immediate and the proof is left to the reader.
\begin{lema}\label{delta2.refined}
Let $A$ be a Young function. The following are equivalent
\begin{enumerate}
\item $A$ verifies the $\Delta_2-$condition.

\item For every $\delta>0$, there exists $C_\delta>1$ such that
$$
A((1+\delta)t)\le C_\delta A(t).
$$

\item There exists $\delta_0>0$ and $C_0>1$ such that
$$
A((1+\delta_0)t)\le C_0 A(t).
$$
\end{enumerate}
\end{lema}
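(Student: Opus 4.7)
The plan is to prove the chain of implications $(1)\Rightarrow(2)\Rightarrow(3)\Rightarrow(1)$, each of which relies only on the monotonicity of $A$ together with iteration of the hypothesized inequality.

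For $(1)\Rightarrow(2)$, I would fix $\delta>0$ and choose the smallest integer $k$ with $1+\delta\le 2^k$. Since $A$ is nondecreasing, $A((1+\delta)t)\le A(2^k t)$, and iterating the $\Delta_2$ inequality $k$ times gives $A(2^k t)\le C^k A(t)$. Setting $C_\delta:=C^k$ yields the claim.

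The implication $(2)\Rightarrow(3)$ is immediate: pick any $\delta_0>0$ (e.g.\ $\delta_0=1$) and let $C_0:=C_{\delta_0}$ from (2).

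For $(3)\Rightarrow(1)$, I would choose the smallest integer $m$ with $(1+\delta_0)^m\ge 2$. Again using monotonicity, $A(2t)\le A((1+\delta_0)^m t)$, and applying the hypothesis of (3) $m$ times yields $A((1+\delta_0)^m t)\le C_0^m A(t)$, so (1) holds with $C:=C_0^m$.

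There is no real obstacle here; the only point to be careful about is that at every step the constants depend on $\delta$ (resp.\ $\delta_0$) through an exponent, so the uniform bound in $t$ is preserved because the iteration count is a fixed integer depending only on $\delta$ (resp.\ $\delta_0$) and not on $t$.
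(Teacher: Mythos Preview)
Your argument is correct and is exactly the standard one; the paper in fact omits the proof entirely, stating that the lemma ``is immediate and the proof is left to the reader.'' Nothing to add.
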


In \cite[Theorem 4.1]{Krasnoselski} it is shown that the $\Delta_2-$condition is equivalent to
\begin{equation}\label{p+}
\frac{ta(t)}{A(t)} \leq p^+  \quad \text{for } t>0,
\end{equation}
for some $p^+>1$.

Moreover, it can be checked that both $A$ and $\tilde A$ verify the $\Delta_2-$condition if and only if
\begin{equation}\label {delta}
 p^- \leq \frac{ta(t)}{A(t)} \leq p^+  \quad \text{for } t>0,
\end{equation}
where $1<p^-\le p^+<\infty$.

Let us now recall some simple inequalities for Young functions that will be helpful later on.
\begin{lema}[{\cite[Lemma 2.6]{Bonder-Salort}}]\label{Lemma2.3}
Let $A$ be a Young function satisfying \eqref{p+}. Then for every $\eta>0$ there exists $C_\eta>0$ such that
$$
A(s+t)\leq C_\eta A(s)+ (1+\eta)^{p^+}A(t)  \quad s,t>0.
$$
\end{lema}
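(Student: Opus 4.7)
The plan is to deduce from \eqref{p+} the homogeneity-type estimate
$$
A(\lambda t) \le \lambda^{p^+} A(t) \qquad \text{for every } \lambda \ge 1, \ t \ge 0,
$$
and then split the argument into two cases according to whether $t$ dominates $s$ or vice versa.

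To obtain the homogeneity bound, I would fix $t>0$ and write \eqref{p+} as $\frac{d}{d\tau} \log A(\tau) = \frac{a(\tau)}{A(\tau)} \le \frac{p^+}{\tau}$, then integrate from $t$ to $\lambda t$ for $\lambda \ge 1$ to get $\log \frac{A(\lambda t)}{A(t)} \le p^+ \log \lambda$, which gives the claim. (One should briefly note that $A(t)>0$ for $t>0$ thanks to (ii) in the definition of a Young function, so the logarithm is well-defined.)

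With this tool in hand, fix $\eta>0$ and split as follows. If $s \le \eta t$, then $s+t \le (1+\eta)t$ and monotonicity of $A$ together with the homogeneity estimate yield
$$
A(s+t) \le A((1+\eta)t) \le (1+\eta)^{p^+} A(t),
$$
so the inequality holds (with the $C_\eta A(s)$ term discarded since it is nonnegative). If instead $s > \eta t$, then $t < s/\eta$, hence $s+t \le (1+1/\eta)s$, and analogously
$$
A(s+t) \le A\!\left((1+\tfrac{1}{\eta})s\right) \le (1+\tfrac{1}{\eta})^{p^+} A(s),
$$
so choosing $C_\eta := (1+1/\eta)^{p^+}$ gives the desired bound in this case (the $(1+\eta)^{p^+} A(t)$ term being harmlessly dropped).

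In both cases the two bounds combine, by adding the nonnegative missing term, into the required inequality $A(s+t) \le C_\eta A(s) + (1+\eta)^{p^+} A(t)$. There is no real obstacle here; the only slightly delicate point is making sure the constant in front of $A(t)$ is exactly $(1+\eta)^{p^+}$ (not some worse power of $1+\eta$), which is why the split has to be done with the threshold $s=\eta t$ and the homogeneity estimate applied with the sharp exponent $p^+$ obtained directly from \eqref{p+}.
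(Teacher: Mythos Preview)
Your argument is correct. The homogeneity bound $A(\lambda t)\le \lambda^{p^+}A(t)$ for $\lambda\ge 1$ follows exactly as you say from integrating \eqref{p+}, and the two-case split at $s=\eta t$ cleanly yields the stated inequality with $C_\eta=(1+1/\eta)^{p^+}$.

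There is nothing to compare against here: the paper does not prove this lemma but simply cites \cite[Lemma~2.6]{Bonder-Salort}. Your proof is in fact the standard one for this type of estimate and is essentially what one finds in the cited reference.
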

\begin{lema}[{\cite[Lemma 2.1]{Bonder-Perez-Salort}}]\label{sacar}
Let $A$ be a Young function satisfying \eqref{delta}, $s,t>0$. Then
$$
\min\{s^{p^-},s^{p^+}\}A(t)\leq A(st)\leq\max\{s^{p^-},s^{p^+}\}A(t).
$$
\end{lema}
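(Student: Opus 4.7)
\textbf{Proof plan for Lemma \ref{sacar}.} Fix $t>0$ and introduce the auxiliary function $\varphi(s):=A(st)/A(t)$ for $s>0$. Since $A(r)=\int_0^r a(\tau)\,d\tau$ with $a$ nondecreasing, $A$ is absolutely continuous and $A'=a$ almost everywhere. A direct differentiation gives $\varphi'(s)=ta(st)/A(t)$, hence
\[
\frac{\varphi'(s)}{\varphi(s)} \;=\; \frac{ta(st)}{A(st)} \;=\; \frac{1}{s}\cdot\frac{(st)\,a(st)}{A(st)}.
\]
By hypothesis \eqref{delta}, the last factor lies in $[p^-,p^+]$, so
\[
\frac{p^-}{s}\;\le\;\frac{\varphi'(s)}{\varphi(s)}\;\le\;\frac{p^+}{s}.
\]

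The plan is now to integrate these logarithmic-derivative bounds, splitting into the cases $s\ge 1$ and $0<s\le 1$. For $s\ge 1$, integrating from $1$ to $s$ and using $\varphi(1)=1$ yields $p^-\log s\le \log\varphi(s)\le p^+\log s$, i.e.\ $s^{p^-}\le \varphi(s)\le s^{p^+}$. For $0<s\le 1$, integrating from $s$ to $1$ gives the reversed chain $s^{p^+}\le \varphi(s)\le s^{p^-}$. In either case
\[
\min\{s^{p^-},s^{p^+}\}\;\le\;\varphi(s)\;\le\;\max\{s^{p^-},s^{p^+}\},
\]
and multiplying by $A(t)$ produces the claimed inequality.

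The only mild subtlety is that $a$ need not be continuous, so one should argue with absolutely continuous functions rather than classical derivatives; concretely, one writes $\log\varphi(s)=\int_1^s \varphi'(\sigma)/\varphi(\sigma)\,d\sigma$ which is valid because $\varphi$ is absolutely continuous and strictly positive on $(0,\infty)$. No other obstacle is expected: the argument is a routine integration of the bilateral bound \eqref{delta}, which is itself the reason this estimate is the ``correct'' replacement in the Orlicz setting for the homogeneity identity $(st)^p = s^p t^p$ available in the $L^p$ case.
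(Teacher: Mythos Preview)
Your argument is correct and is precisely the standard proof of this inequality: differentiate $\log A(st)$ in $s$, use \eqref{delta} to bound the logarithmic derivative between $p^-/s$ and $p^+/s$, and integrate from $1$. The remark about absolute continuity is accurate and handles the only technical point.

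Note, however, that the paper does not give its own proof of this lemma; it is quoted from \cite[Lemma~2.1]{Bonder-Perez-Salort} without argument. Your proof is exactly the kind of computation one finds in that reference (or in any of the standard treatments), so there is nothing to compare beyond saying that you have supplied what the paper omits.
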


In order to understand the behavior of a Young function $A$ at infinity, it is very helpful to introduce the notion of the Matuszewska-Orlicz function and the Matuszewska-Orlicz index.
\begin{defi}\label{MO}
Given a Young function $A$, we define the associated Matuszewska-Orlicz function as
$$
M(t,A) := \limsup_{s\to\infty} \frac{A(st)}{A(s)}.
$$
When no confusion arises, we will simply denote $M(t)=M(t, A)$.

The Matuszewska-Orlicz index is then defined as
$$
p_\infty(A) := \lim_{t\to \infty} \frac{\ln M(t,A)}{\ln t} = \inf_{t>0} \frac{\ln M(t,A)}{\ln t}.
$$
Again, when no confusion arises, we will simply denote $p_\infty=p_\infty(A)$.
\end{defi}

The main feature that we use in this article is the fact that, if $A$ verifies the $\Delta_2-$condition, then the index $p_\infty$ is finite and for any $\ve>0$, there exists $t_0>0$ such that
\begin{equation}\label{recemos}
t^{p_\infty} \le M(t,A)\le t^{p_\infty + \ve}, \quad \text{for } t\ge t_0.
\end{equation}
See \cite{Arriagada} for this fact and more properties of this index.

\begin{remark}\label{MvsA}
It is also easy to check that if $A$ satisfy \eqref{delta}, then $p^-\le p_\infty\le p^+$. and that
$$
\min\{s^{p^+}, s^{p^-}\}M(t)\le M(st)\le \max\{s^{p^+}, s^{p^-}\} M(t).
$$
\end{remark}

We will need the following result regarding the function $M(\cdot, A)$.
\begin{lema}\label{M.orlicz}
If $A$ verifies \eqref{delta}, then $M$ is a Young function.
\end{lema}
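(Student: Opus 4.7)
The plan is to verify the defining properties of a Young function directly from $M(t) = \limsup_{s\to\infty} A(st)/A(s)$. The structural facts I will need are all consequences of \eqref{delta} via Lemma \ref{sacar}: the squeeze $\min\{t^{p^-}, t^{p^+}\}\le M(t)\le \max\{t^{p^-}, t^{p^+}\}$ controls $M$ near $0$ and $\infty$, and the convexity of $A$ will pass to $M$ essentially for free.

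First I would dispatch the easy items. $M(0)=0$ is immediate from $A(0)=0$, and monotonicity of $M$ follows from monotonicity of $A$: for $t_1\le t_2$ and any $s>0$, $A(st_1)\le A(st_2)$, so dividing by $A(s)$ and taking $\limsup$ preserves the inequality. The squeeze from Lemma \ref{sacar} gives $M(t)\ge\min\{t^{p^-}, t^{p^+}\}>0$ for $t>0$, and since $p^->1$, it also gives $M(t)/t\le t^{p^--1}\to 0$ as $t\to 0^+$ (and $M(t)/t\to\infty$ as $t\to\infty$).

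The core step is convexity. Fix $\lambda\in[0,1]$ and $t_1,t_2\ge 0$. Since $A$ is convex, for every $s>0$
$$
\frac{A(s(\lambda t_1+(1-\lambda)t_2))}{A(s)}\le \lambda\frac{A(st_1)}{A(s)}+(1-\lambda)\frac{A(st_2)}{A(s)}.
$$
Taking $\limsup_{s\to\infty}$ on both sides and using the subadditivity of $\limsup$ (the right-hand side is a sum of two sequences, each of which has a finite $\limsup$ by the squeeze above), we conclude
$$
M(\lambda t_1+(1-\lambda)t_2)\le \lambda M(t_1)+(1-\lambda)M(t_2),
$$
so $M$ is convex on $\R_+$.

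Finally I would package these facts into the form required by the definition. A convex function $M\colon\R_+\to\R_+$ with $M(0)=0$ admits a right-derivative $m=M'_+$ which is nondecreasing, right-continuous, and satisfies $M(t)=\int_0^t m(\tau)\,d\tau$. The sublinear behaviour $M(t)/t\to 0$ forces $m(0)=0$, while $m(s)>0$ for $s>0$ because otherwise monotonicity of $m$ would give $M\equiv 0$ on $[0,s]$, contradicting $M(s)\ge \min\{s^{p^-}, s^{p^+}\}>0$. Thus $m$ satisfies (i)–(iv) and $M$ is a Young function. The only step that needs any real care is the convexity argument; the delicate point is making sure $\limsup$ distributes correctly over the convex combination, which is why I rely on the one-sided subadditivity inequality rather than trying to manipulate the $\limsup$ algebraically.
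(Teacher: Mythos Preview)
Your proof is correct and follows essentially the same strategy as the paper: establish convexity of $M$ from the convexity of $A$, and use Lemma~\ref{sacar} to control $M(t)/t$ near $0$ and near $\infty$. You are in fact more careful than the paper on two points: you handle the $\limsup$ in the convexity step via subadditivity (the paper somewhat loosely writes $M(t)=\sup_{s>0} A_s(t)$ and invokes ``a supremum of convex functions is convex''), and you carry the argument all the way to the integral representation $M(t)=\int_0^t m(\tau)\,d\tau$ required by the paper's own definition of a Young function, whereas the paper appeals to an equivalent characterization from Krasnoselski's book.
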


\begin{proof}
According to \cite{Krasnoselski}, we need to check that $M(\cdot, A)$ is convex, even and verifies
$$
\lim_{t\to 0+} \frac{M(t, A)}{t} = 0 \quad \text{and}\quad \lim_{t\to\infty} \frac{M(t, A)}{t}=\infty.
$$
Note that if we call $A_s(t) := \frac{A(st)}{A(s)}$, then $A_s$ is convex and even. So $M(t, A)=\sup_{s>0}A_s(t)$ is also convex and even.

Also, $M(t, A)\ge A_1(t) = A(t)$ and so $\lim_{t\to\infty}  \frac{M(t, A)}{t} \ge \lim_{t\to\infty}  \frac{A(t)}{t} =\infty$.

Finally, from Lemma \ref{sacar}, $A(st)\le t^{p^-} A(s)$ for $t\in (0,1)$. Then it follows that $A_s(t)\le t^{p^-}$ for every $s>0$. Hence $M(t,A)\le t^{p^-}$ and so $\frac{M(t,A)}{t}\le t^{p^--1}\to 0$ as $t\to 0+$.
\end{proof}

\subsection{Orlicz and Orlicz-Sobolev spaces}
Given a Young function $A$ and an open set $\Omega\subset \R^n$ we consider the spaces $L^A(\Omega)$ and $W^{1,A}(\Omega)$ defined as follows:
\begin{align*}
L^A(\Omega)  :=\{ u \in L^1_\text{loc}(\Omega)\colon \Phi_{A,\Omega}(u)<\infty\},\
W^{1,A}(\Omega) :=\{ u\in W^{1,1}_\text{loc}(\Omega)\colon u, |\nabla u| \in L^{A}(\Omega)\},
\end{align*}
where
$$
\Phi_{A,\Omega}(u)= \int_\Omega A(|u|)\,dx.
$$
These spaces are endowed with the so-called \textit{Luxemburg norm} defined as
$$
\|u\|_{L^A(\Omega)}= \|u\|_{A; \Omega} = \|u\|_A :=\inf \left\{ \lambda>0 : \Phi_{A, \Omega}\left(\frac{u}{\lambda}\right)\leq 1 \right\}
$$
and
$$
\|u\|_{W^{1,A}(\Omega)} = \|u\|_{1,A: \Omega} = \|u\|_{1,A} := \|u\|_{L^A(\Omega)} + \| \nabla u \|_{L^A(\Omega)}.
$$
The spaces $L^A(\Omega)$ and
$W^{1,A}(\Omega)$ are separable, reflexive Banach spaces if and only if $A$ verifies \eqref{delta}. See \cite{Krasnoselski}.

From Lemma \ref{sacar} we immediately get
\begin{lema}\label{normayro}
Let $A$ be a Young function satisfying \eqref{delta}. Then the following inequlaity holds true for $u\in L^A(\Omega)$
$$
\min\{\|u\|_A^{p^-},\|u\|_A^{p^+}\}\leq\int_\Omega A(|u|)\, dx\leq \max\{\|u\|_A^{p^-},\|u\|_A^{p^+}\}.
$$
\end{lema}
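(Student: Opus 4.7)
The plan is to reduce the claim directly to Lemma \ref{sacar} after normalizing $u$ by its Luxemburg norm. Assume first that $u\not\equiv 0$ (the case $u=0$ being trivial) and set $\lambda:=\|u\|_A>0$. The key intermediate fact is the identity
$$
\int_\Omega A\!\left(\frac{|u|}{\lambda}\right)\,dx = 1.
$$
This is the standard equality case in the definition of the Luxemburg norm under the $\Delta_2$-condition: by the defining infimum there is a sequence $\mu_n\downarrow\lambda$ with $\Phi_{A,\Omega}(u/\mu_n)\leq 1$, and the $\Delta_2$-condition (via Lemma \ref{delta2.refined}) together with monotone convergence yields $\Phi_{A,\Omega}(u/\lambda)\leq 1$; if strict inequality held, one could decrease $\lambda$ slightly and still stay below $1$, contradicting the minimality of $\lambda$.

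Once the normalization is in place, apply Lemma \ref{sacar} pointwise with $s=\lambda$ and $t=|u(x)|/\lambda$ to get, for a.e.\ $x\in\Omega$,
$$
\min\{\lambda^{p^-},\lambda^{p^+}\}\,A\!\left(\frac{|u(x)|}{\lambda}\right)\leq A(|u(x)|)\leq \max\{\lambda^{p^-},\lambda^{p^+}\}\,A\!\left(\frac{|u(x)|}{\lambda}\right).
$$
Integrating over $\Omega$ and substituting the normalization identity gives exactly
$$
\min\{\|u\|_A^{p^-},\|u\|_A^{p^+}\}\leq \int_\Omega A(|u|)\,dx\leq \max\{\|u\|_A^{p^-},\|u\|_A^{p^+}\}.
$$

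The only mildly delicate point is the normalization identity; everything else is a direct substitution. If one prefers to avoid invoking that equality, one can alternatively work with $\mu>\|u\|_A$ in place of $\lambda$, use only the inequality $\Phi_{A,\Omega}(u/\mu)\leq 1$, and pass to the limit $\mu\downarrow\|u\|_A$ using monotone convergence for the upper bound and Fatou's lemma for the lower bound. Either route gives the estimate in one short step from Lemma \ref{sacar}.
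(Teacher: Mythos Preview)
Your proof is correct and follows exactly the route the paper intends: the paper states only that the lemma follows immediately from Lemma \ref{sacar}, and what you wrote is precisely the natural way to unpack that remark (normalize by $\lambda=\|u\|_A$, use $\Phi_{A,\Omega}(u/\lambda)=1$ under the $\Delta_2$-condition, apply Lemma \ref{sacar} with $s=\lambda$, $t=|u|/\lambda$, and integrate). Your justification of the normalization identity and the alternative limiting argument are both fine and add useful detail that the paper omits.
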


In order for the Sobolev immersion theorem to hold, one need to impose some growth conditions on $A$. Following \cite{Cianchi},  we require $A$ to verify
\begin{align}
\label{A2} \int_K^\infty\left(\frac{t}{A(t)}\right)^\frac{1}{n-1}\,dt=\infty.\\
\label{A3} \int_0^\delta\left(\frac{t}{A(t)}\right)^\frac{1}{n-1}\,dt<\infty,
\end{align}
for some constants $K,\delta>0$.

Given a Young function $A$ that satisfies \eqref{A2} and \eqref{A3} its Orlicz-Sobolev conjugate is defined as
\begin{equation}\label{An}
A_n(t)=A\circ H^{-1} (t),
\end{equation}
where
\begin{equation}\label{H}
H(t)=\left(\int_0^t\left(\frac{\tau}{A(\tau)}\right)^\frac{1}{n-1}\,d\tau\right)^\frac{n-1}{n}.
\end{equation}

The following fundamental Orlicz-Sobolev embedding Theorem can be found in \cite{Cianchi}
\begin{teo}
Let $A$ be a Young function satisfying \eqref{A2} and \eqref{A3} and let $A_n$ be defined in \eqref{An}. Then the embedding $W_0^{1,A}(\Omega)\hookrightarrow L^{A_n}(\Omega)$ is continuous. Moreover, the Young function $A_n$ is optimal in the class of Orlicz spaces.

Finally, given $B$ any Young funcion, the embedding $W_0^{1,A}(\Omega)\hookrightarrow L^{B}(\Omega)$ is compact if and only if  $B\ll A_n$.
\end{teo}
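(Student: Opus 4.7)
The plan is the classical symmetrization-plus-one-dimensional-reduction route. Given $u\in W_0^{1,A}(\Omega)$, let $u^\star$ denote its spherical decreasing rearrangement on the ball $\Omega^\star$ with $|\Omega^\star|=|\Omega|$. Combining the coarea formula with the Euclidean isoperimetric inequality on each superlevel set of $u$, and then integrating against the convex $A$, yields the Pólya-Szegő inequality for Orlicz modulars, $\Phi_{A,\Omega^\star}(|\nabla u^\star|)\le \Phi_{A,\Omega}(|\nabla u|)$, while $\Phi_{B,\Omega^\star}(u^\star)=\Phi_{B,\Omega}(u)$ for every Young function $B$. This reduces matters to the case of radially decreasing profiles on a ball.

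For $u^\star(x)=\phi(|x|)$ on a ball of measure $L$, the change of variable $s=\omega_n r^n$, with $\psi(s)=\phi((s/\omega_n)^{1/n})$, gives
\[
\int_{\Omega^\star} A(|\nabla u^\star|)\,dx = \int_0^L A\bigl(n\omega_n^{1/n}\, s^{(n-1)/n}|\psi'(s)|\bigr)\,ds.
\]
The target becomes a weighted one-dimensional Sobolev inequality for nonincreasing $\psi$ with $\psi(L)=0$. I would prove it by writing $\psi(s)=\int_s^L |\psi'(t)|\,dt$ and applying a Jensen/Hardy-type argument with the weight matched to the exponent $1/(n-1)$ present in \eqref{H}: the extremal profile solves an ODE whose explicit integration produces precisely $H(\psi(s))\propto s^{1/n}$, i.e. $\psi(s)\propto H^{-1}(c s^{1/n})$. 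Since \eqref{An} states $A_n=A\circ H^{-1}$, this yields the sharp bound $\int_0^L A_n(c\psi(s))\,ds\le \int_0^L A(n\omega_n^{1/n}s^{(n-1)/n}|\psi'(s)|)\,ds$, which translates back to the continuous embedding $W_0^{1,A}(\Omega)\hookrightarrow L^{A_n}(\Omega)$.

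Optimality of $A_n$ is shown by testing the embedding against rescaled truncations of the extremal profile above: for any Young function $B$ that is not dominated by $A_n$, this family is bounded in $W_0^{1,A}$ but has $L^B$-norm blowing up. For the compactness equivalence, the sufficient direction ($B\ll A_n$) is a truncation/interpolation argument: split the integrand $B(|u_k-u|)$ at a threshold $M$; the tail $\{|u_k-u|>M\}$ is handled using the uniform bound on $\Phi_{A_n,\Omega}(u_k-u)$ together with the fact that $B(t)/A_n(t)\to 0$, whereas the bounded part is controlled by the classical compact embedding $W_0^{1,A}(\Omega)\hookrightarrow L^1(\Omega)$ from Rellich-Kondrachov applied to the underlying $W^{1,1}$ structure. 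Conversely, if $B\not\ll A_n$, a sequence concentrating at a point, obtained by scaling the extremal profile, is bounded in $W_0^{1,A}$ and converges weakly to zero, yet does not converge strongly in $L^B$. The principal obstacle is extracting the \emph{sharp} one-dimensional inequality: obtaining $A_n$ rather than a strictly larger Young function on the left-hand side is what forces the precise form of $H$ in \eqref{H} and drives the entire optimality claim.
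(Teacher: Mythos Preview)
The paper does not prove this theorem: it is quoted as a known result and attributed to \cite{Cianchi} (see the sentence immediately preceding the statement in Section~2). There is therefore no ``paper's own proof'' to compare against.

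Your outline is essentially the strategy that underlies Cianchi's proof: P\'olya--Szeg\H{o} for Orlicz modulars reduces the problem to radially decreasing profiles, and a sharp one-dimensional Hardy-type inequality with the weight dictated by $H$ in \eqref{H} produces $A_n=A\circ H^{-1}$ as the optimal target. The optimality and compactness arguments you sketch (extremal/concentrating profiles for necessity, truncation plus a subcritical compact embedding for sufficiency) are also the standard ones in that literature. As a plan this is sound; the only genuinely delicate step is the one you flag yourself, namely extracting the \emph{sharp} one-dimensional inequality with exactly $A_n$ on the left. Your Jensen/Hardy description is too vague to count as a proof of that step---Cianchi's argument requires a careful pointwise rearrangement inequality rather than a direct Jensen application---so if you intend to supply an actual proof rather than a citation, that is where the work lies.
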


From now on, we will denote by $S_A$ the optimal constant in the embedding $W^{1,A}_0(\Omega)\subset L^{A_n}(\Omega)$. That is
\begin{equation}\label{SA.def}
S_A := \inf_{\phi\in C^\infty_c(\Omega)} \frac{\|\nabla \phi\|_A}{\|\phi\|_{A_n}}.
\end{equation}

\begin{remark}
It is easy to see that $A\ll A_n$ and hence $W^{1,A}_0(\Omega)\subset L^A(\Omega)$ is compact.
\end{remark}

\section{Preliminary Lemmas}

In this section we prove some technical lemmas that will be helpful in the sequel. Namely, we need to show that $A_n$ defined in \eqref{An} verifies the $\Delta_2-$condition whenever $A$ does. Finally we need a version of the celebrated Brezis-Lieb Lemma to the Orlicz setting.

We begin with some preliminary estimates.
\begin{lema}\label{est.H}
Let $H(t)$ be as in definition \eqref{H}, then the following
inequality holds.
$$
C_1t^\frac{n}{n-p^-}\leq H^{-1}(t)\leq C_2t^\frac{n}{n-p^+},
$$
for $t>1$.
\end{lema}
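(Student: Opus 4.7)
The plan is to sandwich both $H$ and then $H^{-1}$ by power functions, using the two-sided growth estimate for $A$ itself that is built into condition \eqref{delta}.

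First I would use Lemma \ref{sacar} with the substitution $s\mapsto \tau$, $t\mapsto 1$ to obtain, for every $\tau\ge 1$,
\begin{equation*}
A(1)\,\tau^{p^-}\le A(\tau)\le A(1)\,\tau^{p^+}.
\end{equation*}
Inverting these inequalities and raising to the power $\tfrac{1}{n-1}$ gives constants $c_1,c_2>0$ such that
\begin{equation*}
c_1\,\tau^{\frac{1-p^+}{n-1}}\le \Bigl(\frac{\tau}{A(\tau)}\Bigr)^{\frac{1}{n-1}}\le c_2\,\tau^{\frac{1-p^-}{n-1}}
\quad\text{for all }\tau\ge 1.
\end{equation*}

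Next I would integrate from $0$ to $t$, splitting the integral at $1$. The contribution on $[0,1]$ is a finite constant thanks to hypothesis \eqref{A3}, while on $[1,t]$ the two power bounds above produce, since $p^-\le p^+<n$ forces the exponents $\frac{1-p^{\pm}}{n-1}$ to lie strictly above $-1$, primitives proportional to $t^{(n-p^{\pm})/(n-1)}$. For $t>1$ the lower order terms and the $[0,1]$ piece can be absorbed by slightly adjusting the multiplicative constants, yielding
\begin{equation*}
\tilde c_1\, t^{\frac{n-p^+}{n-1}}\;\le\;\int_0^t\Bigl(\frac{\tau}{A(\tau)}\Bigr)^{\frac{1}{n-1}}\,d\tau\;\le\;\tilde c_2\, t^{\frac{n-p^-}{n-1}}.
\end{equation*}
Raising to the $\tfrac{n-1}{n}$ power gives $c'_1\, t^{(n-p^+)/n}\le H(t)\le c'_2\, t^{(n-p^-)/n}$ for $t>1$.

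Finally I would invert: since $H$ is strictly increasing with $H(1)$ bounded, writing $s=H(t)$ and solving each inequality for $t$ gives $t\le (s/c'_1)^{n/(n-p^+)}$ from the lower bound on $H$, and $t\ge (s/c'_2)^{n/(n-p^-)}$ from the upper bound. This yields constants $C_1,C_2>0$ with
\begin{equation*}
C_1\, s^{\frac{n}{n-p^-}}\le H^{-1}(s)\le C_2\, s^{\frac{n}{n-p^+}}\qquad\text{for }s>1,
\end{equation*}
which is the claimed estimate.

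The only mildly delicate step is step two: one must be careful that the integral from $0$ to $1$ (where the sandwich by pure powers need not hold) is finite, which is exactly guaranteed by \eqref{A3}, and that the residual constants from writing $t^{\alpha}-1$ instead of $t^{\alpha}$ can be absorbed on the range $t>1$. Everything else is bookkeeping with exponents; the real content is the uniform two-sided power comparison supplied by \eqref{delta} via Lemma \ref{sacar}.
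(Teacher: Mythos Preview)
Your proof is correct and follows essentially the same route as the paper: both use Lemma~\ref{sacar} to sandwich $A(\tau)$ between $A(1)\tau^{p^-}$ and $A(1)\tau^{p^+}$ for $\tau\ge 1$, split the defining integral for $H$ at $\tau=1$, integrate the resulting power bounds on $[1,t]$, and then invert the two-sided estimate $c_1' t^{(n-p^+)/n}\le H(t)\le c_2' t^{(n-p^-)/n}$. You are in fact slightly more explicit than the paper in two places---invoking \eqref{A3} to justify finiteness of the $[0,1]$ contribution, and spelling out the inversion step---but the underlying argument is identical.
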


\begin{proof}
From the definition of $H$, \eqref{H}, we get, for $t>1$,
\begin{align*}
H(t)^\frac{n}{n-1}&=\int_0^1\left(\frac{\tau}{A(\tau)}\right)^\frac{1}{n-1}\,d\tau+\int_1^t\left(\frac{\tau}{A(\tau)}\right)^\frac{1}{n-1}\,d\tau\\
&= C_0+\int_1^t\left(\frac{\tau}{A(\tau)}\right)^\frac{1}{n-1}\,d\tau,
\end{align*}
Observe that $C_0$ depends on $A$ and $n$.

Now, using Lemma \ref{sacar}, for $t>1$ we obtain
$$
A(1)^{\frac{-1}{n-1}}\int_1^t\tau^\frac{1-p_+}{n-1}\,d\tau\le \int_1^t\left(\frac{\tau}{A(\tau)}\right)^\frac{1}{n-1}\,d\tau\le A(1)^{\frac{-1}{n-1}}\int_1^t\tau^\frac{1-p_-}{n-1}\,d\tau.
$$
Observe that, for $1<p<n$,
$$
\int_1^t\tau^\frac{1-p}{n-1}\,d\tau = \frac{n-1}{n-p}(t^\frac{n-p}{n-1}-1).
$$
Then
$$
C_1t^\frac{n-p^+}{n}\leq H(t)\leq C_2t^\frac{n-p^-}{n}.
$$
From this last estimate we obtain the desired result.
\end{proof}

\begin{remark}\label{pn}
Combining Lemma \ref{sacar} with Lemma \ref{est.H} is easy to conclude that $A_n(t)$ verifies, for $t>1$,
$$
C_1t^{(p^-)_*}\leq A_n(t)\leq C_2t^{(p^+)_*},
$$
for some constants $C_1,C_2>0$ depending only on $A$ and $n$.

Recall that given an exponent $p\in (1,n)$ we denote by $p_*$ the Sobolev conjugate, $p_*=\frac{np}{n-p}$.
\end{remark}

Let us now check that the critical function $A_n$ inherits the $\Delta_2-$condition from $A$.
\begin{lema}
Let $A$ be a Young function satisfying \eqref{delta}, \eqref{A2} and \eqref{A3} and let $A_n$ be the Young function defined in \eqref{An}. Then, if $p^+<n$, $A_n$ verifies $\Delta_2$-condition.
\end{lema}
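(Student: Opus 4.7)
The plan is to verify the characterization \eqref{p+} for $A_n$, namely to produce a finite constant bounding $tA_n'(t)/A_n(t)$ for $t>0$. Since \eqref{p+} is equivalent to the $\Delta_2$-condition (Krasnoselskii), this will suffice.

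Set $s=H^{-1}(t)$, so that $A_n(t)=A(s)$ and by implicit differentiation of $H(s)=t$, using the definition \eqref{H},
\[
(H^{-1})'(t) = \frac{1}{H'(s)} = \frac{n}{n-1}\, t^{1/(n-1)} \left(\frac{A(s)}{s}\right)^{1/(n-1)},
\]
which yields
\[
\frac{tA_n'(t)}{A_n(t)} = \frac{n}{n-1}\cdot \frac{t^{n/(n-1)}\, a(s)\,(A(s)/s)^{1/(n-1)}}{A(s)}.
\]

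The key step is an upper bound on $t^{n/(n-1)}=H(s)^{n/(n-1)}=\int_0^s (\tau/A(\tau))^{1/(n-1)}\,d\tau$. For $\tau\le s$, Lemma \ref{sacar} applied to the scaling factor $\tau/s\le 1$ gives $A(\tau)\ge (\tau/s)^{p^+}A(s)$, hence
\[
\left(\frac{\tau}{A(\tau)}\right)^{1/(n-1)} \le \left(\frac{s^{p^+}}{A(s)}\right)^{1/(n-1)} \tau^{(1-p^+)/(n-1)}.
\]
The hypothesis $p^+<n$ is what makes the exponent $(1-p^+)/(n-1)$ strictly greater than $-1$, so the integral converges at $0$; performing it gives
\[
t^{n/(n-1)} \le \frac{n-1}{n-p^+}\cdot \frac{s^{n/(n-1)}}{A(s)^{1/(n-1)}}.
\]

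Plugging this into the previous identity and cancelling powers of $s$ and $A(s)$,
\[
\frac{tA_n'(t)}{A_n(t)} \le \frac{n}{n-p^+}\cdot \frac{s\, a(s)}{A(s)} \le \frac{n p^+}{n-p^+},
\]
where the last inequality uses \eqref{delta} for $A$. This shows $A_n$ satisfies \eqref{p+} with exponent $np^+/(n-p^+)$, and therefore the $\Delta_2$-condition.

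The main obstacle is the pointwise estimate on $t^{n/(n-1)}=H(s)^{n/(n-1)}$; the rest is bookkeeping. The subtle points are (a) using $p^+$ rather than $p^-$ in the lower bound for $A(\tau)$, because $\tau/s\le 1$ forces the $\max$ in Lemma \ref{sacar} to be realized by $p^+$ after inverting, and (b) recognizing that $p^+<n$ is exactly the integrability threshold at the origin, which is why the hypothesis appears in the statement.
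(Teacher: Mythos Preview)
Your proof is correct, but it proceeds along a genuinely different route from the paper's. The paper argues at the level of the doubling inequality itself: it shows, via the substitution $\tau\mapsto 2\tau$ in the integral defining $H$ and the bound $A(2\tau)\le 2^{p^+}A(\tau)$, that $H(2t)\ge 2^{1-p^+/n}H(t)$, whence $H^{-1}((1+\delta_0)s)\le 2H^{-1}(s)$ with $1+\delta_0=2^{1-p^+/n}>1$ (here $p^+<n$ is used), and then $A_n((1+\delta_0)t)\le A(2H^{-1}(t))\le CA_n(t)$ via the $\Delta_2$-condition for $A$ and Lemma~\ref{delta2.refined}. You instead verify the differential characterization \eqref{p+} for $A_n$: you differentiate $A_n=A\circ H^{-1}$, bound $H(s)^{n/(n-1)}$ pointwise by the closed-form integral coming from $A(\tau)\ge(\tau/s)^{p^+}A(s)$, and obtain $tA_n'(t)/A_n(t)\le \tfrac{n}{n-p^+}\cdot\tfrac{sa(s)}{A(s)}\le (p^+)_*$. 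Your argument is slightly more computational but yields sharper information---an explicit upper index $(p^+)_*=np^+/(n-p^+)$ for $A_n$, consistent with Remark~\ref{pn}---whereas the paper's approach is more elementary, avoiding differentiation of $H^{-1}$ and needing only the doubling of $A$. Both uses of the hypothesis $p^+<n$ are the same in spirit: in the paper it makes $2^{1-p^+/n}>1$, in yours it makes $\tau^{(1-p^+)/(n-1)}$ integrable at the origin.
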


\begin{proof}
By definition of $H$ and using that $A(t)$ verifies the $\Delta_2$-condition , we obtain
\begin{align*}
H(2t)^\frac{n}{n-1}&=\int_0^{2t}\left(\frac{\tau}{A(\tau)}\right)^\frac{1}{n-1}\,d\tau  = 2^\frac{n}{n-1}\int_0^{t}\left(\frac{\tau}{A(2\tau)}\right)^\frac{1}{n-1}\,d\tau\\
&\geq 2^\frac{n}{n-1}\int_0^{t}\left(\frac{\tau}{2^{p^+} A(\tau)}\right)^\frac{1}{n-1}\,d\tau =2^{\frac{n-p^+}{n-1}} H(t)^\frac{n}{n-1}.
\end{align*}
Then
\begin{equation}\label{cotaH}
H(2t)\geq 2^{1-\frac{p^+}{n}} H(t).
\end{equation}
From \eqref{cotaH} we easily get that
$$
 2H^{-1}(s)\geq H^{-1}\left(2^{1-\frac{p^+}{n}} s\right).
$$
Observe that since $p^+<n$, $2^{1-\frac{p^+}{n}}= 1+\delta_0$ for some $\delta_0>0$.

Now, we are in position to prove that $A_n$ verifies $\Delta_2$-condition. In fact,
 $$
 A_n((1+\delta_0) t)=A(H^{-1}((1+\delta_0) t))\leq A\left(2H^{-1}\left(t\right)\right)\leq
 CA\left(H^{-1}\left(t\right)\right)= CA_n\left(t\right),
 $$
and the proof follows by Lemma \ref{delta2.refined}.
\end{proof}

To finish this section, we prove the Brezis-Lieb lemma in the Orlicz setting.
\begin{lema}[Brezis-Lieb Lemma]\label{Brezis-Lieb}\index{Brezis-Lieb lemma}
Let $B$ be a Young function, $f_n\to f$ a.e and $f_n\rightharpoonup f$ in $L^{B}(\Omega)$ then, for every $\phi\in L^\infty(\Omega)$ it follows that
$$
\lim_{n\to\infty} \left(\int_\Omega B(|f_n|)\phi \, dx - \int_\Omega B(|f-f_n|) \phi \, dx\right) = \int_\Omega B(|f|) \phi \, dx.
$$
\end{lema}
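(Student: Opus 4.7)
My plan is to imitate the classical Brezis--Lieb argument, replacing its pointwise inequality $\bigl||a+b|^p-|a|^p-|b|^p\bigr|\le \ve|b|^p+C_\ve|a|^p$ by an Orlicz analogue extracted from Lemma~\ref{Lemma2.3}. First I set $g_n:=f_n-f$, so that $g_n\to 0$ a.e.\ and $f_n=f+g_n$. Since $f_n\rightharpoonup f$ in $L^B(\Omega)$ the sequence is norm-bounded, and by Lemma~\ref{normayro} we have $M:=\sup_n\int_\Omega B(|g_n|)\,dx<\infty$; also $B(|f|)\in L^1(\Omega)$.

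The core step is the pointwise estimate: for every $\ve>0$ there exists $C_\ve>0$ such that
$$\bigl|B(|a+b|)-B(|b|)-B(|a|)\bigr|\le \ve\, B(|b|)+C_\ve B(|a|)\qquad\text{for all }a,b\in\R.$$
I derive this from Lemma~\ref{Lemma2.3} applied in both directions. Since $|a+b|\le|a|+|b|$ and $|b|\le|a|+|a+b|$, the lemma yields, for any $\eta>0$,
$$B(|a+b|)\le C_\eta B(|a|)+(1+\eta)^{p^+}B(|b|), \qquad B(|b|)\le C_\eta B(|a|)+(1+\eta)^{p^+}B(|a+b|).$$
Choosing $\eta$ so small that $(1+\eta)^{p^+}-1\le \ve$ and $[(1+\eta)^{p^+}-1](1+\eta)^{p^+}\le \ve$, and substituting the first inequality into the second, both one-sided differences $B(|a+b|)-B(|b|)$ and $B(|b|)-B(|a+b|)$ are bounded by $\ve B(|b|)+C_\ve B(|a|)$; absorbing an extra $B(|a|)$ gives the displayed estimate.

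With the pointwise inequality in hand, apply it with $a=f$, $b=g_n$ and set
$$W_n^\ve:=\bigl(\bigl|B(|f_n|)-B(|g_n|)-B(|f|)\bigr|-\ve B(|g_n|)\bigr)_+.$$
Then $0\le W_n^\ve\le C_\ve B(|f|)\in L^1(\Omega)$, and since $B$ is continuous with $g_n\to 0$ a.e.\ and $f_n\to f$ a.e., we have $W_n^\ve\to 0$ a.e. Dominated convergence gives $\int_\Omega W_n^\ve|\phi|\,dx\to 0$, and therefore
$$\left|\int_\Omega\bigl[B(|f_n|)-B(|g_n|)-B(|f|)\bigr]\phi\,dx\right|\le \int_\Omega W_n^\ve|\phi|\,dx+\ve\|\phi\|_\infty M.$$
Taking $\limsup_{n\to\infty}$ and then $\ve\to 0$ concludes the proof.

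The main obstacle is the pointwise inequality of the second paragraph: the $\ve$-factor must multiply $B(|g_n|)$ (the quantity only \emph{bounded} in $L^1$), while the $L^1$-dominating piece must be a multiple of $B(|f|)$ (a fixed integrable function). Obtaining this asymmetric splitting forces one to use Lemma~\ref{Lemma2.3} in both triangle-inequality directions and to carefully track which constant stays close to $1$ and which one is allowed to blow up as $\ve\to 0$. Once this pointwise control is in place, the measure-theoretic part of the argument is routine.
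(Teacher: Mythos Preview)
Your proof is correct and follows essentially the same route as the paper's: derive from Lemma~\ref{Lemma2.3} a pointwise estimate of the form $|B(|a+b|)-B(|b|)-B(|a|)|\le \ve B(|b|)+C_\ve B(|a|)$, define the truncated remainder $W^\ve_n$, apply dominated convergence against $C_\ve B(|f|)$, and close with the uniform $L^1$ bound on the $\ve$-piece. The only cosmetic difference is that the paper puts the $\ve$-factor on $B(|f_n|)$ rather than on $B(|g_n|)$, which makes no substantive change to the argument.
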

\begin{proof}
First, by Lemma \ref{Lemma2.3} we know that given $\ve>0$, there
exists $C_\ve$ such that for every $a,b\in\R$, the following inequality holds
$$
|B(|a+b|)-B(|a|)|\leq \varepsilon B(|a|)+C_{\varepsilon} B(|b|).
$$
We define
$$
W_{\varepsilon,n}(x)=(|B(|f_n(x)|)-B(|f(x)-f_n(x)|)-B(|f(x)|)|-\varepsilon
B(|f_n(x)|))_+,
$$
and note that $W_{\varepsilon,n}(x)\to 0$ as $n\to\infty$ a.e. On
the other hand,
\begin{align*}
|B(|f_n(x)|)- B(|f(x)-f_n(x)|)- B(|f(x)|)|&\leq|B(|f_n(x)|)-B(|f(x)-f_n(x)|)|+B(|f(x)|)\\
&\leq\varepsilon B(|f_n(x)|)+C_\varepsilon B(|f(x)|)+ B(|f(x)|),
\end{align*}
i.e.
$$
|B(|f_n(x)|)-B(|f(x)-f_n(x)|)-B(|f(x)|)|-\varepsilon B(|f_n(x)|)\leq
(C_\varepsilon+1)B(|f(x)|),
$$
therefore
$$
0\leq W_{\varepsilon,n}(x)\leq(C_\varepsilon+1) B(|f(x)|).
$$
By the dominated convergence Theorem, we conclude that
$$
\lim_{n\to\infty} \int_{\Omega} W_{\varepsilon,n}(x)\phi(x) \, dx = 0.
$$
On the other hand,
$$
|B(|f_n(x)|)-B(|f(x)-f_n(x)|)- B(|f(x)|)|\leq
W_{\varepsilon,n}(x)+\varepsilon B(|f_n(x)|).
$$
Then, if we denote
$
I_n=\int_\Omega \left(B(|f_n(x)|)- B(|f(x)-f_n(x)|)-B(|f(x)|)\right)\phi(x) \, dx,
$
we get
\begin{align*}
|I_n| &\le \int_\Omega  W_{\varepsilon,n}(x)|\phi(x)|\, dx + \varepsilon
\int_\Omega B(|f_n|) |\phi(x)|\, dx\\
& \le \int_\Omega W_{\varepsilon,n}(x) |\phi(x)|\, dx + \varepsilon \sup_{n\in\N}\int_\Omega B(|f_n|) |\phi|\, dx \\
&\le \int_\Omega W_{\varepsilon,n}(x) |\phi(x)|\, dx + \varepsilon C \|\phi\|_\infty,
\end{align*}
for some constant $C>0$. Hence, we can conclude that $\limsup
I_n\leq \varepsilon C\|\phi\|_\infty$, for every $\varepsilon>0$.
\end{proof}

\section{ Proof of the Concentration Compactness Principle}

This is the principal section of the paper where we prove the concentration compactness principle in the context of Orlicz spaces.

In this section we assume that the Young function $A$ satisfies condition \eqref{delta}.

Given $A$ a young function satisfying \eqref{delta}, we denote by $A_\infty$ the following Young function associated with $A$:
\begin{equation}\label{A.infty}
A_\infty(t)=\max\{t^{p^+},t^{p^-}\}.
\end{equation}
Observe that $A\le A_\infty$ both in the sense of Young functions and also in the pointwise sense.

\begin{remark}
$A_\infty$ verifies the $\Delta_2$-condition.
\end{remark}

In the sequel it will be helpful a comparison between the Orlicz functions $A_\infty$ and $M_n$, where $M_n(t)=M(t,A_n)$ is the Matuszewska-Orlicz function associated to $A_n$. This is the content of the next lemma.
\begin{lema}\label{AinftyMn}
With the same assumptions and notations of the section, assume that $p^+ < (p^-)_*$. Then
$$A_\infty\ll M_n$$
\end{lema}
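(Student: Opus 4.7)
The plan is to establish the pointwise lower bound $M_n(t)\ge t^{(p^-)_*}$ for all $t\ge 1$. Granting this for the moment, for any fixed $c>0$ and $t$ large enough that $ct\ge 1$ one has $A_\infty(ct)=(ct)^{p^+}=c^{p^+}t^{p^+}$, and therefore
$$
\frac{A_\infty(ct)}{M_n(t)}\le c^{p^+}\,t^{\,p^+-(p^-)_*}\longrightarrow 0 \quad\text{as } t\to\infty,
$$
because the exponent is negative by the assumption $p^+<(p^-)_*$. This is exactly the definition of $A_\infty\ll M_n$, so the proof would be complete.

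To obtain the lower bound for $M_n$, I would first sharpen the scaling estimate for $H$ that appeared in the preceding lemma. The substitution $\tau=\lambda\sigma$ in \eqref{H} yields the identity
$$
H(\lambda t)^{\frac{n}{n-1}}=\lambda^{\frac{n}{n-1}}\int_0^t\left(\frac{\sigma}{A(\lambda\sigma)}\right)^{\frac{1}{n-1}}d\sigma,
$$
and inserting the bound $A(\lambda\sigma)\ge \lambda^{p^-}A(\sigma)$ from Lemma \ref{sacar}, valid for $\lambda\ge 1$, gives
$$
H(\lambda t)\le \lambda^{\frac{n-p^-}{n}}H(t) \quad\text{for } \lambda\ge 1.
$$
Inverting this inequality (using that $H$ is strictly increasing and setting $\mu=\lambda^{(n-p^-)/n}$) produces the matching lower bound
$$
H^{-1}(\mu u)\ge \mu^{\frac{n}{n-p^-}}H^{-1}(u) \quad\text{for } \mu\ge 1.
$$

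With these two estimates in hand, for $t\ge 1$ and $s>0$ arbitrary I would chain
$$
\frac{A_n(st)}{A_n(s)}=\frac{A(H^{-1}(st))}{A(H^{-1}(s))}\ge \frac{A\!\left(t^{\frac{n}{n-p^-}}H^{-1}(s)\right)}{A(H^{-1}(s))}\ge t^{\frac{n p^-}{n-p^-}}=t^{(p^-)_*},
$$
where the first inequality combines monotonicity of $A$ with the scaling of $H^{-1}$, and the second is once again Lemma \ref{sacar} applied with factor $t^{n/(n-p^-)}\ge 1$. Passing to the $\limsup$ as $s\to\infty$ yields the required bound on $M_n$. The only delicate point is producing the sharp scaling of $H$ with exponent $p^-$ rather than $p^+$ (the previous lemma used $p^+$ because it needed the opposite direction of the inequality); everything else reduces to monotonicity of $A$ and $H^{-1}$ together with the two-sided power bounds of Lemma \ref{sacar}.
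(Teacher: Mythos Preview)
Your argument is correct. The paper's own proof is a one-liner invoking Remarks \ref{MvsA} and \ref{pn}; the content behind that citation is precisely the lower bound $M_n(t)\ge t^{(p^-)_*}$ for $t\ge 1$, which you derive directly by establishing the sharp scaling $H(\lambda t)\le \lambda^{(n-p^-)/n}H(t)$ and then chaining with Lemma \ref{sacar}. So your approach is the same as the paper's, just with the details made explicit rather than deferred to the two remarks.
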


\begin{proof}
This result is an immediate consequence of Remarks \ref{MvsA} and \ref{pn}.
\end{proof}

This next theorem is our main result.
\begin{teo}\label{ccp}
Let $\{u_k\}_{k\in\N}\subset W^{1,A}(\Omega)$ be a sequence such that $u_k\rightharpoonup u$ weakly in $W^{1,A}(\Omega)$. Then there exists a countable set $I$, positive numbers $\{\mu_i\}_{i\in I}$ and $\{\nu_i\}_{i\in I}$ such that
\begin{align}
\label{nu}& A_n(|u_k|)dx \rightharpoonup \nu = A_n(|u|)\,dx + \sum_{i\in I} \nu_i \delta_{x_i} \quad \text{weakly-* in the sense of measures,}\\
\label{mu}& A(|\nabla u_k|)\, dx \rightharpoonup \mu \geq A(|\nabla u|)\, dx + \sum_{i\in I} \mu_i \delta_{x_i} \quad \text{weakly-* in the sense of measures,}\\
\label{relacion}&S_A \frac{1}{M_n^{-1}(\frac{1}{\nu_i})}\leq\frac{1}{A_\infty^{-1}(\frac{1}{\mu_i})}, \qquad \text{for every } i\in I,
\end{align}
where $S_A$ is defined in \eqref{SA.def} and $M_n(t)=M(t, A_n)$ is the Matuszewska-Orlicz function associated to $A_n$. 
\end{teo}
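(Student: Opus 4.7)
The proof adapts the classical concentration-compactness strategy of P.L.~Lions, with the Young functions $M_n$ and $A_\infty$ playing the role that the powers $t^{p^*}$ and $t^p$ play in the $L^p$ setting. I would begin by passing to a subsequence so that $u_k\to u$ a.e.\ (via the compact embedding recalled at the end of Section~2) and so that the bounded sequences of Radon measures $A(|\nabla u_k|)\,dx$ and $A_n(|u_k|)\,dx$ converge weakly-$*$ to $\mu$ and $\nu$. Setting $v_k:=u_k-u$, one has $v_k\rightharpoonup 0$ in $W^{1,A}(\Omega)$, $v_k\to 0$ in $L^A(\Omega)$ and a.e., with $A(|\nabla v_k|)\,dx\rightharpoonup\tilde\mu$, $A_n(|v_k|)\,dx\rightharpoonup\tilde\nu$ as measures. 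An application of the Brezis--Lieb lemma (Lemma~\ref{Brezis-Lieb}) with $B=A_n$ gives the decomposition $\nu=A_n(|u|)\,dx+\tilde\nu$, so \eqref{nu} follows once $\tilde\nu$ is shown to be purely atomic.

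The core ingredient is a Sobolev-type estimate obtained by testing \eqref{SA.def} against $\phi v_k\in W^{1,A}_0(\Omega)$ for an arbitrary $\phi\in C^\infty_c(\Omega)$:
\[
S_A\,\|\phi v_k\|_{A_n}\le \|\nabla(\phi v_k)\|_A \le \|\phi\nabla v_k\|_A + \|v_k\nabla\phi\|_A.
\]
The compactness $v_k\to 0$ in $L^A$ makes the last term $o(1)$, and then the Luxemburg definition rewrites this as the fundamental inequality
\[
\int_\Omega A_n\!\left(\frac{S_A\,|\phi v_k|}{\|\phi\nabla v_k\|_A+o(1)}\right)dx \le 1.
\]

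The main obstacle is passing to the limit in the latter and transferring it into inequalities between the measures $\tilde\nu$ and $\tilde\mu$. Writing $\sigma_k:=\|\phi\nabla v_k\|_A$, the global scaling bound $A(st)\le A_\infty(s)\,A(t)$ from Lemma~\ref{sacar} converts $\int A(|\phi\nabla v_k|/\sigma_k)\,dx=1$ into $\int A_\infty(|\phi|/\sigma)\,d\tilde\mu\ge 1$ in the limit $\sigma=\lim_k\sigma_k$. For the integrand of the fundamental inequality, the analogue must be deduced from \eqref{recemos}, which only controls $A_n(st)/A_n(s)$ for $s$ large. One therefore splits $\Omega$ into $\{|v_k|\ge t_\ve\}$, where $A_n(S_A|\phi v_k|/\sigma_k)$ is comparable to $M_n(S_A|\phi|/\sigma_k)\,A_n(|v_k|)$ up to an error controlled by $\ve$, and its complement, where the contribution vanishes by dominated convergence (using $v_k\to 0$ a.e.\ together with the $\Delta_2$-bound for $A_n$ proved earlier). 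The outcome is
\[
\int_\Omega M_n\!\left(\frac{S_A\,|\phi|}{\sigma}\right)d\tilde\nu \le 1.
\]

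Both inequalities hold for every $\phi\in C^\infty_c(\Omega)$. Combined with $A_\infty\ll M_n$ from Lemma~\ref{AinftyMn}, a standard localization argument with shrinking cutoffs forces $\tilde\nu$ to be concentrated on a countable set $I=\{x_i\}_{i\in I}$; writing $\tilde\nu=\sum_i\nu_i\delta_{x_i}$ and $\mu_i:=\tilde\mu(\{x_i\})$, weak lower semicontinuity of $w\mapsto\int A(|\nabla w|)\phi\,dx$ yields \eqref{mu}. Finally, choosing $\phi=\phi_\ve\in C^\infty_c(B_\ve(x_i))$ with $\phi_\ve(x_i)=1$ and $0\le\phi_\ve\le 1$ and letting $\ve\to 0$, the two measure-level inequalities localise at the atom to $M_n(S_A/\sigma)\,\nu_i\le 1$ and $A_\infty(1/\sigma)\,\mu_i\ge 1$. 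Inverting these gives $\sigma\ge S_A/M_n^{-1}(1/\nu_i)$ and $\sigma\le 1/A_\infty^{-1}(1/\mu_i)$, and chaining them produces \eqref{relacion}. The principal difficulty throughout is the third paragraph: the Matuszewska bound \eqref{recemos} is asymptotic rather than global, which forces the $\ve$-splitting and requires the $\Delta_2$-condition for $A_n$ to keep the error terms uniformly controlled.
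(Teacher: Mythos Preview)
Your proposal tracks the paper's architecture closely: the reduction to $v_k=u_k-u$, the Brezis--Lieb splitting, the Sobolev test against $\phi v_k$, the decomposition of the $A_n$ integral into $\{|v_k|\ge K\}$ and its complement to handle the merely asymptotic Matuszewska bound, and the final cutoff computation for \eqref{relacion} all match the paper's Lemma~\ref{lema.rh} and the closing argument. Your pair of modular inequalities is exactly a rephrasing of the reverse H\"older estimate \eqref{RH}, namely $S_A\|\phi\|_{M_n,\tilde\nu}\le\|\phi\|_{A_\infty,\tilde\mu}$, written at the particular scale $\sigma=\lim_k\|\phi\nabla v_k\|_A$.

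The one genuine gap is the sentence ``a standard localization argument with shrinking cutoffs forces $\tilde\nu$ to be concentrated on a countable set.'' Shrinking cutoffs in \eqref{RH} only tell you that atoms of $\tilde\nu$ lie among atoms of $\tilde\mu$; they do \emph{not} exclude a diffuse part of $\tilde\nu$. The obstruction is that \eqref{RH} compares two \emph{different} measures, whereas the atomicity criterion (Lemma~\ref{Lema 3}) requires a reverse H\"older inequality with the \emph{same} measure on both sides. The paper, following Lions, closes this in Lemma~\ref{Lema 1}: one first reads off from \eqref{RH} that $\tilde\nu\ll\tilde\mu$ with bounded Radon--Nikodym density, then performs the Lebesgue decomposition $\tilde\mu=g\,d\tilde\nu+(\text{singular part})$ and tests \eqref{RH} against functions of the form $\varphi(g)\psi\,\chi_{\{g\le k\}}$, with $\varphi$ chosen so that the $A_\infty$-weight and the $M_n$-weight collapse onto a common density $h$. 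This manufactures a \emph{self}-referential inequality $\|\psi\|_{M_n,\,h\,d\tilde\nu}\le C\|\psi\|_{A_\infty,\,h\,d\tilde\nu}$, to which Lemmas~\ref{Lema 3} and~\ref{Lema 2} apply and yield the purely atomic structure of $\tilde\nu$. That Lebesgue-decomposition trick is the technical heart of the theorem and is not a consequence of localization by cutoffs; your outline needs to incorporate it (or an equivalent device) at that point.
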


The reader should compare Theorem \ref{ccp} with \cite[Lemma 4.2 ]{FIN}, where the authors obtain a similar result but the estimates on the concentration points are less sharp than ours. 

The strategy of the proof is the same as in the original work of P.L. Lions \cite{Lions}. 

Assume first that $u=0$ and so, passing if necessary to a subsequence,  $u_k\to 0$ a.e. in $\Omega$. We will prove Theorem \ref{ccp} in this case and then with the help of Lemma \ref{Brezis-Lieb} we can easily extend it to the general case.

We divide the proof of Theorem \ref{ccp} into a series of lemmas.The first one is a reverse-H\"older type inequality between the measures $\nu$ and $\mu$.

\begin{lema}\label{lema.rh}
Let $A$ be a Young function satisfying \eqref{delta}, \eqref{A2}, \eqref{A3} and let $A_n$ be given by \eqref{An}. Then, for every $\phi\in C^\infty_c(\Omega)$ the following reverse H\"older inequality holds:
\begin{equation}\label{RH}
S_A\|\phi\|_{M_n, \nu} \leq  \|\phi\|_{A_\infty,\mu},
\end{equation}
where $A_\infty$ is given by \eqref{A.infty} and  $M_n(t) = M(t, A_n)$ is the Matuszewska-Orlicz function associated to $A_n$ given in Definition \ref{MO}.
\end{lema}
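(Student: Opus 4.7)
The plan is to apply the Orlicz–Sobolev embedding $S_A\|v\|_{A_n}\le \|\nabla v\|_A$ to the admissible test function $v=\phi u_k\in W^{1,A}_0(\Omega)$ and pass to the limit as $k\to\infty$, using the weak-$*$ convergences $A_n(|u_k|)\,dx\rightharpoonup\nu$ and $A(|\nabla u_k|)\,dx\rightharpoonup\mu$ (in the current reduction, with $u=0$). The starting inequality
$$
S_A \|\phi u_k\|_{A_n} \le \|\nabla(\phi u_k)\|_A \le \|\phi\,\nabla u_k\|_A + \|u_k\,\nabla\phi\|_A
$$
splits the RHS into a main term and a commutator.

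For the gradient side, the commutator $\|u_k\,\nabla\phi\|_A\le \|\nabla\phi\|_\infty\|u_k\|_A\to 0$ because of the compact embedding $W^{1,A}_0(\Omega)\hookrightarrow L^A(\Omega)$ (which gives $u_k\to 0$ strongly in $L^A$). For the main term I would apply Lemma \ref{sacar} pointwise to obtain $A(|\phi|\,|\nabla u_k|/\lambda)\le A_\infty(|\phi|/\lambda)\,A(|\nabla u_k|)$; integrating and using that $A_\infty(|\phi|/\lambda)$ is continuous and compactly supported, the weak-$*$ convergence yields $\limsup_k\int A(|\phi\,\nabla u_k|/\lambda)\,dx\le \int A_\infty(|\phi|/\lambda)\,d\mu$. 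Taking $\lambda=(1+\eta)\|\phi\|_{A_\infty,\mu}$ and using Lemma \ref{sacar} once more (now for $A_\infty$) makes the right-hand side strictly below $1$, so $\|\phi\,\nabla u_k\|_A\le (1+\eta)\|\phi\|_{A_\infty,\mu}$ for $k$ large, and letting $\eta\to 0^+$ gives $\limsup_k\|\phi\,\nabla u_k\|_A\le \|\phi\|_{A_\infty,\mu}$.

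The delicate side is the $A_n$-term, where non-homogeneity forbids a direct factorisation of $A_n(|\phi u_k|)$. The substitute is the Matuszewska–Orlicz asymptotic: combining the definition of $M_n$, the $\Delta_2$-condition, and the regularity estimates in \eqref{recemos} and Remark \ref{MvsA}, for every $\varepsilon>0$ one has $s_\varepsilon$ such that $A_n(s\cdot r)\le (M_n(r)+\varepsilon)\,A_n(s)$ for $s\ge s_\varepsilon$, uniformly for $r$ in compact subsets of $(0,\infty)$. I would split $\int A_n(|\phi u_k|/\lambda)\,dx$ according to whether $|u_k|>s_\varepsilon$ or $|u_k|\le s_\varepsilon$. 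On the first set the asymptotic gives $A_n(|\phi u_k|/\lambda)\le (M_n(|\phi|/\lambda)+\varepsilon)A_n(|u_k|)$, whose integral tends to $\int(M_n(|\phi|/\lambda)+\varepsilon)\,d\nu$ by weak-$*$ convergence against the continuous compactly supported function $M_n(|\phi|/\lambda)+\varepsilon$. On the complementary set the integrand is dominated by $A_n(\|\phi\|_\infty s_\varepsilon/\lambda)$ and converges to $0$ a.e. (since $u_k\to 0$ a.e.), so dominated convergence kills this piece. Sending $\varepsilon\to 0^+$, the modular bound can then be converted to a Luxemburg norm bound by the $\Delta_2$-condition for $M_n$ (which is a Young function by Lemma \ref{M.orlicz}), letting me compare $\|\phi u_k\|_{A_n}$ with $\|\phi\|_{M_n,\nu}$ and combine with the Sobolev chain to conclude $S_A\|\phi\|_{M_n,\nu}\le \|\phi\|_{A_\infty,\mu}$.

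The main obstacle I anticipate is precisely this $A_n$-to-$M_n$ passage. Since $M_n$ is defined as a $\limsup$, the asymptotic $A_n(sr)/A_n(s)\to M_n(r)$ is only guaranteed in one direction, and considerable care is needed to ensure the direction of the resulting modular inequality is compatible with the Sobolev inequality so as to produce the claim (and not its reverse). Exploiting the sharp two-sided growth control on $M_n$ coming from \eqref{recemos} and Remark \ref{MvsA}, together with uniformity of the asymptotic in the bounded argument range provided by $\phi\in C^\infty_c(\Omega)$, is what should make the argument close.
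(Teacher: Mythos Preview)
Your treatment of the gradient side is correct and coincides with the paper's: factor out $A_\infty$ via Lemma \ref{sacar}, pass to the weak-$*$ limit, and dispose of the commutator $\|u_k\nabla\phi\|_A$ by compactness of $W^{1,A}_0\hookrightarrow L^A$.

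The gap is exactly the one you flag but do not resolve. On the $A_n$-side you commit to the \emph{upper} asymptotic $A_n(sr)\le (M_n(r)+\varepsilon)A_n(s)$ for $s$ large, which after your splitting yields
\[
\limsup_{k\to\infty}\int_\Omega A_n\!\left(\frac{|\phi u_k|}{\lambda}\right)dx \;\le\; \int_\Omega M_n\!\left(\frac{|\phi|}{\lambda}\right)d\nu,
\]
hence $\limsup_k\|\phi u_k\|_{A_n}\le\|\phi\|_{M_n,\nu}$. But $\|\phi u_k\|_{A_n}$ sits on the \emph{left} of the Sobolev inequality; to place $S_A\|\phi\|_{M_n,\nu}$ there you need the \emph{opposite} comparison $\liminf_k\|\phi u_k\|_{A_n}\ge\|\phi\|_{M_n,\nu}$. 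Your inequality runs the wrong way and cannot be chained with your (correct) upper bound on the right to produce \eqref{RH}. The estimates in \eqref{recemos} and Remark \ref{MvsA} are growth statements about $M_n$ itself, not lower bounds on the prelimit ratio $A_n(st)/A_n(t)$, so they do not flip the direction for you.

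The paper instead uses the lower asymptotic: given $\delta>0$ choose $K$ with $A_n(st)\ge (M_n(s)-\delta)A_n(t)$ for $t\ge K$, and split $\int A_n(|\phi u_k|/\lambda)\,dx$ over $\{|u_k|\ge K\}$ and $\{|u_k|<K\}$. On the first set this gives $A_n(|\phi u_k|/\lambda)\ge (M_n(|\phi|/\lambda)-\delta)A_n(|u_k|)$; weak-$*$ convergence then produces $\int(M_n(|\phi|/\lambda)-\delta)\,d\nu$, while the second piece vanishes by dominated convergence since $u_k\to 0$ a.e. Letting $\delta\to 0$ gives $\liminf_k\int A_n(|\phi u_k|/\lambda)\,dx\ge\int M_n(|\phi|/\lambda)\,d\nu$, hence $\liminf_k\|\phi u_k\|_{A_n}\ge\|\phi\|_{M_n,\nu}$, which is the direction that combines correctly with $\limsup_k\|\nabla(\phi u_k)\|_A\le\|\phi\|_{A_\infty,\mu}$ to yield \eqref{RH}.
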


\begin{proof}
Let $\phi\in C^\infty_c(\Omega)$ and we apply Sobolev inequality to $\phi u_k$, to obtain
\begin{equation}\label{RH1}
S_A\|\phi u_k\|_{A_n}\leq\|\nabla(\phi u_k)\|_{A}.
\end{equation}
First, we can estimate the left hand side in the following way: given $\delta>0$ let $K>0$ be such that
$$
A_n(st)\ge A_n(t) (M_n(s) - \delta),\quad \text{ for } t\ge K.
$$
Then
\begin{align*}
\liminf_{k\to\infty}\int_\Omega A_n\left(\frac{|\phi u_k|}{\lambda}\right)\,dx&\geq \liminf_{k\to\infty}\int_{\{u_k\ge K\}} A_n\left(\frac{|\phi u_k|}{\lambda}\right)\frac{A_n(|u_k|)}{A_n(|u_k|)}\,dx\\
&\geq\liminf_{k\to\infty}\int_{\{u_k\ge K\}} \left(M_n\left(\frac{|\phi|}{\lambda}\right)-\delta\right)A_n(u_k)\,dx\\
&= \liminf_{k\to\infty}\left(\int_\Omega - \int_{\{u_k< K\}}\right) \left(M_n\left(\frac{|\phi|}{\lambda}\right)-\delta\right)A_n(u_k)\,dx\\
& = \liminf_{k\to\infty} I - II.
\end{align*}

From \eqref{nu} it follows that
$$
\lim_{k\to\infty} I = \int_{\Omega} \left(M_n\left(\frac{|\phi|}{\lambda}\right)-\delta\right)\,d\nu
$$
and from the dominated convergence Theorem, since $u_k\to 0$ a.e., it follows that $\lim_{k\to\infty} II = 0$.

From these computations, since $\delta>0$ is arbitrary, one immediately obtain that
$$
\liminf_{k\to\infty} \int_\Omega A_n\left(\frac{|\phi u_k|}{\lambda}\right)\,dx\ge \int_\Omega M_n\left(\frac{|\phi|}{\lambda}\right)\, d\nu.
$$
From this inequality it follows that
$$
\liminf_{k\to\infty} \|\phi u_k\|_{A_n}\ge \|\phi\|_{M_n, d\nu}.
$$

Now we deal with the right hand side of \eqref{RH1}. First, we observe that
$$
|\ \|\nabla(\phi u_k)\|_{A} - \|\phi\nabla u_k\|_{A} |\le \| u_k \nabla \phi\|_{A}.
$$
Then, we observe that the right side of the inequality converges to 0 since $u_k\to 0$ in $L^A(\Omega)$.
Hence we can replace the right hand side of \eqref{RH1} by $\|\phi\nabla u_k\|_{A}$.

Now, using Lemma \ref{sacar},
\begin{align*}
\limsup_{k\to\infty}\int_{\Omega} A\left(|\nabla u_k| \frac{\phi}{\lambda}\right)\,dx&\le \limsup_{k\to\infty}\int_{\Omega}\max\left\{\left(\frac{\phi}{\lambda}\right)^{p^+},\left(\frac{\phi}{\lambda}\right)^{p^-}\right\} A(|\nabla u_k|)\,dx\\
&= \limsup_{k\to\infty} \int_{\Omega} A_\infty\left(\frac{\phi}{\lambda}\right) A(|\nabla u_k|)\,dx\\
&= \int_{\Omega} A_\infty\left(\frac{\phi}{\lambda}\right)\,d\mu,
\end{align*}
so
$$
\limsup_{k\to\infty} \|\phi\nabla u_k\|_{A}\leq\|\phi\|_{A_\infty,\mu}
$$
This inequality completes the proof.
\end{proof}

The next lemma is an easy adaptation of the first part of \cite[Lemma I.2]{Lions}. We include the details for completeness.
\begin{lema}\label{Lema 3}
Let $\nu$ be a non-negative, bounded Borel measure and let $A, B$ be two Young functions such that $A\ll B$. If
\begin{equation}\label{rh-lema3}
\|\phi\|_{B, \nu} \leq C \|\phi\|_{A,\nu},
\end{equation}
for some constant $C>0$ and for every $\phi\in C^\infty_c(\Omega)$. Then there exists $\delta>0$ such that for all Borel sets $U \subset \overline{\Omega}$, either $\nu(U)=0$ or $\nu(U)\ge\delta$.
\end{lema}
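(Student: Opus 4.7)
The plan is to apply the reverse Hölder inequality \eqref{rh-lema3} to characteristic functions of Borel sets and then exploit $A\ll B$ to force a uniform positive lower bound on any positive $\nu$-measure.

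First I would extend \eqref{rh-lema3} from $\phi\in C_c^\infty(\Omega)$ to $\phi=\chi_U$ for a Borel set $U\subset\overline\Omega$. Using outer regularity of the finite Borel measure $\nu$ one reduces to $U$ open, and then approximates $\chi_U$ from below by an increasing sequence of smooth compactly supported functions $\phi_n$ with $0\le\phi_n\le 1$. Since $A$ and $B$ are continuous, the modulars $A(\phi_n/\lambda)$ and $B(\phi_n/\lambda)$ are uniformly dominated by the constants $A(1/\lambda)$ and $B(1/\lambda)$, so dominated convergence transfers $\|\phi_n\|_{A,\nu}$ and $\|\phi_n\|_{B,\nu}$ to $\|\chi_U\|_{A,\nu}$ and $\|\chi_U\|_{B,\nu}$ in the limit.

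The next step is an explicit computation of the Luxemburg norm of a characteristic function. For any Young function $F$,
$$
\int_\Omega F\!\left(\frac{\chi_U}{\lambda}\right)\,d\nu = F\!\left(\frac{1}{\lambda}\right)\nu(U),
$$
so the condition $\int F(\chi_U/\lambda)\,d\nu\le 1$ is equivalent to $1/\lambda\le F^{-1}(1/\nu(U))$, whence $\|\chi_U\|_{F,\nu}=1/F^{-1}(1/\nu(U))$. Plugging $F=A$ and $F=B$ into the extended \eqref{rh-lema3} gives
$$
A^{-1}\!\left(\frac{1}{\nu(U)}\right)\le C\,B^{-1}\!\left(\frac{1}{\nu(U)}\right).
$$

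Finally I would translate $A\ll B$ into a statement about the inverses. For any $c>0$, the relation $A(cs)/B(s)\to 0$ yields $A(cs)\le B(s)$ for all sufficiently large $s$; setting $t=B(s)$ this becomes $c\,B^{-1}(t)\le A^{-1}(t)$ for $t$ large, so $A^{-1}(t)/B^{-1}(t)\to\infty$ as $t\to\infty$. Comparing with the displayed inequality forces $1/\nu(U)\le T_0$ for some $T_0$ depending only on $A$, $B$, and $C$, and one takes $\delta:=1/T_0$. The main technical obstacle is the extension step: since $C_c^\infty(\Omega)$ cannot approximate indicators of sets touching $\partial\Omega$, some care is needed, but this can be handled as in \cite[Lemma I.2]{Lions} by first establishing the dichotomy for Borel subsets of compact $K\subset\Omega$ and then transferring to $\overline\Omega$ via regularity of $\nu$.
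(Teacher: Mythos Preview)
Your proposal is correct and follows essentially the same approach as the paper: extend \eqref{rh-lema3} to characteristic functions, compute their Luxemburg norms explicitly to obtain $A^{-1}(1/\nu(U))\le C\,B^{-1}(1/\nu(U))$, and then use $A\ll B$ to force a lower bound on $\nu(U)$. The only cosmetic difference is that the paper phrases the last step as a contradiction (choosing $t_k$ with $B(t_k)=1/\nu(U_k)$ to get $B(t_k)\le A(Ct_k)$), whereas you argue directly that $A^{-1}(t)/B^{-1}(t)\to\infty$; and the paper dispatches the extension step with a one-line ``it is easy to see'' while you spell out the approximation argument.
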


\begin{proof}
It is easy to see that inequality \eqref{rh-lema3} still holds for characteristic functions of Borel sets. Then we may take $\phi=\chi_U$ with $\nu(U)\neq0$, so
$$
\int_{\Omega} B\left(\frac{\chi_U}{\lambda}\right)\, d\nu = \int_U B\left(\frac{1}{\lambda}\right)\, d\nu = B\left(\frac{1}{\lambda}\right)\nu(U).
$$
Then
\begin{equation}\label{norm.char}
\|\chi_U\|_{B, \nu} = \frac{1}{B^{-1}\left(\frac{1}{\nu(U)}\right)}.
\end{equation}
Analogously,
$$
\|\chi_U\|_{A, \nu} = \frac{1}{A^{-1}\left(\frac{1}{\nu(U)}\right)}.
$$
Therefore we obtain the following inequality $A^{-1}\left(\frac{1}{\nu(U)}\right)\le C B^{-1}\left(\frac{1}{\nu(U)}\right)$.

Assume by contradiction that there exist $U_k$ such that $\nu(U_k) = \varepsilon_k\to 0$, so
$$
A^{-1}\left(\frac{1}{\varepsilon_k}\right)\leq C B^{-1}\left(\frac{1}{\varepsilon_k}\right).
$$
We choose $t_k$ such that $B(t_k)=\frac{1}{\varepsilon_k}$. Then $t_k\to\infty$ and  $A^{-1}(B(t_k))\leq Ct_k$. By composition with $A$ we obtain that $B(t_k)\leq A(Ct_k)$, which contradicts the fact that $A\ll B$.
\end{proof}

The next lemma is exactly as the end of \cite[Lemma I.2]{Lions}.
\begin{lema}\label{Lema 2}
Let $\nu$ be a non-negative bounded Borel measure on $\overline{\Omega}$. Assume that there exists $\delta>0$ such that for every Borel set $U$ we have that, $\nu(U)=0$ or $\nu(U)\geq\delta$. Then, there exist a countable index set $I$, points $\{x_i\}_{i\in I}\subset \bar\Omega$ and scalars $\{\nu_i\}_{i\in I}\in (0,\infty)$ such that
$$
\nu = \sum_{i\in I} \nu_i\delta_{x_i}.
$$
\end{lema}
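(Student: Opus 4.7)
The plan is essentially measure-theoretic and uses no Orlicz-space structure: under the given dichotomy I will show that $\nu$ has only finitely many atoms, each carrying mass at least $\delta$, and that the complement of these atoms is a $\nu$-null set; the desired decomposition $\nu=\sum_{i}\nu_i\delta_{x_i}$ follows at once.

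Step 1 (atoms carry mass $\ge\delta$ and are finitely many). Since $\nu$ is a finite Borel measure, continuity from above yields
\[
\nu(\{x\})=\lim_{r\downarrow 0}\nu(B_r(x)) \quad\text{for every }x\in\bar\Omega.
\]
By hypothesis each $\nu(B_r(x))$ lies in $\{0\}\cup[\delta,\infty)$. If $\nu(\{x\})>0$, then every ball around $x$ has $\nu(B_r(x))\ge\nu(\{x\})>0$, hence $\nu(B_r(x))\ge\delta$, and passing to the limit gives $\nu(\{x\})\ge\delta$. Since distinct atoms are disjoint and $\nu(\bar\Omega)<\infty$, the set of atoms is finite with cardinality at most $\nu(\bar\Omega)/\delta$; I enumerate them as $\{x_1,\dots,x_N\}$ and put $\nu_i:=\nu(\{x_i\})\ge\delta$.

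Step 2 (the diffuse part is zero) and conclusion. Fix $y\in\bar\Omega\setminus\{x_1,\dots,x_N\}$. Since $\nu(\{y\})=0$ and $\nu$ is finite, continuity from above gives $\nu(B_r(y))\to 0$ as $r\downarrow 0$; choosing $r_y>0$ small enough that $B_{r_y}(y)$ simultaneously misses the finite set $\{x_1,\dots,x_N\}$ and satisfies $\nu(B_{r_y}(y))<\delta$, the dichotomy then forces $\nu(B_{r_y}(y))=0$. The collection $\{B_{r_y}(y):y\in\bar\Omega\setminus\{x_1,\dots,x_N\}\}$ is an open cover of this Lindel\"of set; extracting a countable subcover and invoking countable subadditivity yields $\nu(\bar\Omega\setminus\{x_1,\dots,x_N\})=0$. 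Combining both steps gives $\nu(U)=\sum_{i=1}^N\nu_i\chi_U(x_i)$ for every Borel $U\subset\bar\Omega$, i.e.\ $\nu=\sum_{i=1}^N\nu_i\delta_{x_i}$, so the index set is in fact finite (compatibly with the statement's ``countable''). The only conceptual care required is to ensure that the balls in the Lindel\"of cover truly avoid every atom, which is automatic once Step 1 is in hand, so there is no serious obstacle here.
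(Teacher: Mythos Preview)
Your proof is correct. The paper does not actually supply a proof of this lemma; it simply states that the result is ``exactly as the end of \cite[Lemma I.2]{Lions}'' and moves on, so there is nothing substantive to compare against. Your argument is the standard one (and essentially what Lions does): the dichotomy forces every atom to have mass at least $\delta$, hence there are only finitely many, and continuity from above together with the dichotomy kills the diffuse part via a countable cover. One very minor simplification: in Step~1 you do not need to pass through balls at all, since $\{x\}$ is itself a Borel set and the hypothesis applies to it directly.
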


Now we need a lemma that plays a key role in the proof of Theorem \ref{ccp}.
\begin{lema}\label{Lema 1}
Under the same assumptions of Lemma \ref{lema.rh}, there exist a countable index set $I$, points $\{x_i\}_{i\in I}\subset\bar{\Omega}$ and scalars $\{\nu_i\}_{i\in I}\subset (0,\infty)$, such that
$$
\nu=\sum_{i\in I} \nu_i\delta_{x_i}.
$$
\end{lema}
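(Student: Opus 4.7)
The plan is to derive from Lemma \ref{lema.rh}, specialised to characteristic functions $\phi=\chi_U$, a pointwise bound $\nu(U)\le H(\mu(U))$ with $H(s)/s\to 0$ as $s\to 0^+$. This will force every atom of $\nu$ to be an atom of $\mu$ and will rule out a continuous part of $\nu$. Since the bounded Radon measure $\mu$ has at most countably many atoms $\{x_i\}_{i\in I}$, the representation $\nu=\sum_{i\in I}\nu_i\delta_{x_i}$ will follow. I prefer this direct path to reducing to Lemmas \ref{Lema 3} and \ref{Lema 2}, because those demand a reverse H\"older inequality with a \emph{single} measure, and such an inequality for $\nu$ does not seem to follow from the mixed-measure inequality \eqref{RH}.

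First I would plug $\phi=\chi_U$ into \eqref{RH} via the usual smooth-approximation argument (the modular definition of the Luxemburg norm combined with monotone/dominated convergence), and use \eqref{norm.char} to obtain, for every Borel $U\subset\bar\Omega$ with $\mu(U),\nu(U)>0$,
$$
A_\infty^{-1}\!\left(\frac{1}{\mu(U)}\right)\le\frac{1}{S_A}\,M_n^{-1}\!\left(\frac{1}{\nu(U)}\right),
$$
equivalently $\nu(U)\le H(\mu(U))$ with $H(s):=1/M_n\!\left(S_A\,A_\infty^{-1}(1/s)\right)$. Writing $t=A_\infty^{-1}(1/s)$ turns $H(s)/s$ into $A_\infty(t)/M_n(S_At)$, which tends to zero as $s\to 0^+$ thanks to Lemma \ref{AinftyMn}. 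Applied to $U=B_r(x)$ for any $x$ with $\mu(\{x\})=0$ this gives $\nu(B_r(x))\to 0$ as $r\to 0^+$, so $\nu(\{x\})=0$; every atom of $\nu$ therefore lies in the countable set $\{x_i\}_{i\in I}$ of atoms of $\mu$.

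To rule out a continuous part, set $E:=\bar\Omega\setminus\{x_i\}_{i\in I}$, on which $\mu$ is atomless. For any Borel $U\subset E$ and any $\eta>0$ I would use the limit $H(s)/s\to 0$ to pick $\varepsilon>0$ such that $H(s)\le\eta s$ for $s<\varepsilon$, and then partition $U$ into finitely many Borel pieces $U_1,\dots,U_N$ with $\mu(U_j)<\varepsilon$ (possible for any atomless bounded Borel measure on $\R^n$). Summing $\nu(U_j)\le H(\mu(U_j))\le \eta\,\mu(U_j)$ yields $\nu(U)\le\eta\,\mu(U)$, and $\eta\to 0$ gives $\nu(U)=0$, so $\nu(E)=0$. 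Hence $\nu=\sum_{i\in I}\nu_i\delta_{x_i}$ with $\nu_i:=\nu(\{x_i\})$, after discarding indices with zero mass. The only genuine analytic ingredient beyond Lemma \ref{lema.rh} is the limit $A_\infty(t)/M_n(S_At)\to 0$, which is exactly where the essential-larger-than order $A_\infty\ll M_n$ (and through Lemma \ref{AinftyMn} the assumption $p^+<(p^-)_*$) enters; the remaining bookkeeping is routine measure theory, the only care point being the smooth approximation needed to justify taking $\phi=\chi_U$ in \eqref{RH}.
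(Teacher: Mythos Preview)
Your proof is correct and takes a genuinely different, more direct route than the paper. The paper follows Lions: after using $\phi=\chi_U$ in \eqref{RH} to get $\nu\ll\mu$ with bounded density, it writes the Lebesgue decomposition $\mu=g\,d\nu+\sigma$ and plugs test functions of the form $\varphi(g)\psi\chi_{\{g\le k\}}$ into \eqref{RH}, with $\varphi$ chosen so that the algebraic identity $A_\infty(\varphi(g))\,g=\min\{\varphi(g)^{p_*^+},\varphi(g)^{p_*^-}\}$ holds; this trick manufactures a \emph{single-measure} reverse H\"older inequality $C\|\psi\|_{M_n,\nu_k}\le\|\psi\|_{A_\infty,\nu_k}$ for $d\nu_k=h\chi_{\{g\le k\}}\,d\nu$, to which Lemmas~\ref{Lema 3} and~\ref{Lema 2} then apply, and one lets $k\to\infty$. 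Your remark that those lemmas require the same measure on both sides is on target, and the paper does real work to reduce to that setting. Your argument bypasses all of it: the bound $\nu(U)\le H(\mu(U))$ with $H(s)=o(s)$ already encodes the superlinear gap, and partitioning the atomless part of $\mu$ into pieces of small $\mu$-measure (possible by Sierpi\'nski's theorem for non-atomic finite measures, or simply by compactness of $\bar\Omega$) converts it into $\nu(U)\le\eta\,\mu(U)$ for every $\eta>0$. Both approaches invoke $A_\infty\ll M_n$ at exactly the same place; yours is shorter and avoids the Radon--Nikodym/Lebesgue machinery and the algebraic choice of $\varphi$, while the paper's stays closer to the classical Lions template and makes explicit use of its auxiliary lemmas.
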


\begin{proof}
By the reverse H\"older inequality \eqref{RH}, the measure
$\nu$ is absolutely continuous with respect to $\mu$. In fact, if we
choose $\phi=\chi_U$, by \eqref{norm.char},
$$
\|\chi_U\|_{A_\infty, \mu} = \begin{cases} 0& \text{if } \mu(U)=0\\
\frac{1}{A_\infty^{-1}\left(\frac{1}{\mu(U)}\right)}&\text{if } \mu(U)>0.
\end{cases}
$$
Also, 
$$
\|\chi_U\|_{M_n, \nu} = \begin{cases} 0& \text{if } \nu(U)=0\\
\frac{1}{M_n^{-1}\left(\frac{1}{\nu(U)}\right)}&\text{if } \nu(U)>0.
\end{cases}
$$
This facts together with \eqref{RH} clearly imply that $\nu\ll\mu$.

 As a consequence there exists $f\in L_\mu^1(\Omega)$, $f\geq0$, such that $\nu=\mu\lfloor f$. Also by \eqref{RH} we can conclude that
$f\in L^\infty_\mu(\Omega)$. In fact,
$$
\pint_U f\, d\mu = \frac{\nu(U)}{\mu(U)} \le \frac{1}{\mu(U) M_n\left(C A_\infty^{-1}\left(\frac{1}{\mu(U)}\right)\right)}.
$$
Observe that if we denote $t=A_\infty^{-1}\left(\frac{1}{\mu(U)}\right).$ We have the following equality
$$
\mu(U) M_n\left(C A_\infty^{-1}\left(\frac{1}{\mu(U)}\right)\right) = \frac{M_n(C t)}{A_\infty(t)},
$$ and the last term goes to $\infty$ as $t\to\infty$ by Lemma \ref{AinftyMn}.

In other words, the function $r \mapsto \frac{1}{r M_n\left(C A_\infty^{-1}\left(\frac{1}{r}\right)\right)}$ is bounded
in $[0,\mu(\Omega))$, then $f\in L^\infty_\mu(\Omega)$.

On the other hand the Lebesgue decomposition of $\mu$ with respect to $\nu$ gives us
$$
\mu=\nu\lfloor g + \sigma,
$$
where $g\in L^1_\nu(\Omega)$, $g\geq0$ and $\sigma$ is a bounded positive measure, singular with respect to $\nu$.

Let $\psi\in C^\infty_c(\Omega)$ and consider \eqref{RH} applied to the test functions of the form
$\varphi(g)\psi\chi_{\{g\leq k\}}$ where $\varphi(t)$ is to be determined, $\varphi(0)=0$.

We obtain
\begin{align*}
C \|\varphi(g)\psi\chi_{\{g\leq k\}}\|_{M_n,\nu}&\leq\|\varphi(g)\psi\chi_{\{g\leq k\}}\|_{A_\infty,\mu}\\
&\leq\|\varphi(g)\psi\chi_{\{g\leq k\}}\|_{A_\infty,gd\nu+d\sigma}.\\
\end{align*}
Since $\sigma\perp\nu$, $A_\infty$ is sub-multiplicative (i.e. $A_\infty(st)\le A_\infty(s) A_\infty(t)$) and $A_\infty(\chi_U)=\chi_U$, we have that
\begin{align*}
\int_\Omega A_\infty\left(\frac{\varphi(g)\psi\chi_{\{g\leq k\}}}{\lambda}\right)\, d\mu & = \int_\Omega A_\infty\left(\frac{\varphi(g)\psi\chi_{\{g\leq k\}}}{\lambda}\right)g\,d\nu + \int_\Omega A_\infty\left(\frac{\varphi(g)\psi\chi_{\{g\leq k\}}}{\lambda}\right)\,d\sigma\\
&=\int_\Omega A_\infty\left(\frac{\varphi(g)\psi\chi_{\{g\leq k\}}}{\lambda}\right)g\,d\nu\\
&\leq\int_\Omega
A_\infty\left(\frac{\psi}{\lambda}\right)A_\infty(\varphi(g)) g\chi_{\{g\leq k\}}\,d\nu.
\end{align*}

On the other hand, combining remarks \ref{pn} and \ref{MvsA}, we have
\begin{align*}
\int_\Omega M_n\left(\frac{\varphi(g) \psi \chi_{\{g\le k\}}}{\lambda}\right)\, d\nu\ge \int_\Omega M_n\left( \frac{\psi}{\lambda}\right) \min\{\varphi(g)^{p^+_*}, \varphi(g)^{p^-_*}\} \chi_{\{g\le k\}}\, d\nu 
\end{align*}

Hence, if we define
$$
\varphi(t)=\begin{cases}
t^{\frac{1}{p^+_* - p^-}} & \text{if } t<1\\
t^{\frac{1}{p^-_* - p^+}} & \text{if } t\ge 1
\end{cases}
$$
then
$$
\max\{\varphi(t)^{p^+}, \varphi(t)^{p^-}\} t = \min\{\varphi(t)^{p^+_*}, \varphi(t)^{p^-_*}\}
$$
and we get that
$$
h(x) := A_\infty(\varphi(g(x))) g(x) = \min\{\varphi(g(x))^{p^+_*}, \varphi(g(x))^{p^-_*}\}
$$
Hence if we denote $d\nu_k := h(x) \chi_{\{g\leq k\}} d\nu$ the following
reverse H\"older inequality holds
$$
C\|\psi\|_{M_n,\nu_k}\le \|\psi\|_{A_\infty,\nu_k}.
$$
Now, by Lemmas \ref{Lema 2} and \ref{Lema 3}, there exists $\{x_i^k\}_{i\in I^k}$ and $\nu_i^k>0$ such that $\nu_k=\sum_{i\in
I^k} \nu_i^k\delta_{x_i^k}$. On the other hand, $\nu_k\nearrow h(x)\nu$. Then, we have
$$
\nu=\sum_{i\in I} \nu_i \delta_{x_i}.
$$
This finishes the proof.
\end{proof}

Now we are in position to prove Theorem \ref{ccp}.

\begin{proof}[Proof of Theorem \ref{ccp}]
First we write $v_k=u_k-u$. Then, we can apply Lemmas \ref{lema.rh}--\ref{Lema 1} to conclude that
\begin{equation}\label{nu.1}
A_n(|v_k|)\, dx \rightharpoonup d\bar\nu = \sum_{i\in I}\nu_i \delta_{x_i},
\end{equation}
weakly star in the sense of measures.

Now, we use Lemma \ref{Brezis-Lieb} to obtain
$$
\lim_{k\to\infty}\left(\int_\Omega \phi A_n(|u_k|)-\int_\Omega\phi A_n(|v_k|) dx\right)=\int_\Omega \phi A_n(|u|) dx,
$$
for any $\phi\in C^\infty_c(\Omega)$, from where the representation
$$
A_n(|u_k|)\, dx \rightharpoonup d\nu=A_n(|u|)\, dx + d\bar\nu
$$
follows.

It remains to analyze the measure $\mu$ and to estimate the weights $\nu_i$ and $\mu_i$.

To this end, we consider again $v_k=u_k-u$ and denote by $\bar\mu$ the weak* limit of $A(|\nabla v_k|)\, dx$ as $k\to\infty$.

Let $\phi\in C^\infty_c(\R^n)$ be such that $0\leq\phi\leq1$, $\phi(0)=1$ and supp$(\phi)\subset B_1(0)$. Now, for each $i\in I$ and $\varepsilon>0$, we denote $\phi_{\varepsilon,i}(x):= \phi((x-x_i)/\varepsilon)$.

Now we apply \eqref{RH} to the measures $\bar\nu$ and $\bar\mu$ to obtain
$$
S_A\frac{1}{M_n^{-1}(\frac{1}{\nu_i})}\le C \|\phi_{\varepsilon,i}\|_{M_n,\bar\nu} \leq \|\phi_{\varepsilon,i}\|_{A_\infty,\bar\mu},
$$

On the one hand,
$$
1=\int_{B_\varepsilon(x_i)} A_\infty\left(\frac{|\phi_{\varepsilon, i}|}{ \|\phi_{\varepsilon, i}\|_{A_\infty, \bar\mu}}\right)\,d\bar\mu \le A_\infty\left(\frac{1}{ \|\phi_{\varepsilon, i}\|_{A_\infty, \bar\mu}}\right) \bar\mu(B_{\varepsilon}(x_i)),
$$
hence,
$$
\|\phi_{\varepsilon, i}\|_{A_\infty, \bar\mu} \le \frac{1}{A_\infty^{-1}(\frac{1}{\bar\mu(B_\ve(x_i))})}\to \frac{1}{A_\infty^{-1}(\frac{1}{\bar\mu_i})} \quad \text{as }\ve \to 0,
$$
where
$$
\bar \mu_i := \mu(\{x_i\}) = \lim_{\ve\to 0} \bar\mu(B_{\varepsilon}(x_i)).
$$
Therefore,
$$
\bar\mu\ge \sum_{i\in I} \bar\mu_i \delta_{x_i} \qquad \text{and}\qquad S_A \frac{1}{M_n^{-1}(\frac{1}{\nu_i})}\le \frac{1}{A_\infty^{-1}(\frac{1}{\bar\mu_i})} 
$$
On the other hand, using Lemma \ref{Lemma2.3}, we have that for any $\delta>0$ there exists a constant $C_\delta$ such that
$$
A(|\nabla v_k|)\le (1+\delta) A(|\nabla u_k|) + C_\delta A(|\nabla u|).
$$
This inequality implies, passing to the limit $k\to\infty$, that
$$
d\bar\mu\le (1+\delta) d\mu + C_\delta A(|\nabla u|) \, dx,
$$
from where it follows that
$$
\bar\mu_i \le (1+\delta) \mu_i,\quad \text{where } \mu_i := \mu(\{x_i\}).
$$
This shows that $\mu \ge \sum_{i\in I} \mu_i \delta_{x_i} =\tilde\mu$ and, since $\delta>0$ is arbitrary, we get
$$
S_A \frac{1}{M_n^{-1}(\frac{1}{\nu_i})}\le \frac{1}{A_\infty^{-1}(\frac{1}{\mu_i})}.
$$

To end the proof it remains to show that $d\mu\geq A(|\nabla u|)\, dx$ since $\tilde\mu$ is orthogonal to the Lebesgue measure.

Now, the fact that $u_k\rightharpoonup u$ weakly in $W_0^{1,A}(\Omega)$ implies that $\nabla u_k\rightharpoonup\nabla u$ weakly in $L^{A}(U)$ for all $U\subset\Omega$. Hence, since the modular is a convex and strongly continuous functional, by \cite[Corollary 3.9]{Brezis} it follows that it is weakly lower semicontinuous. Hence we obtain that $d\mu\geq A(|\nabla u|)\,dx$ as we wanted to show.

This finishes the proof.
\end{proof}

\section{Application}
In this section, we study the existence problem for the following elliptic equation
\begin{equation}\label{aplicacion}
\begin{cases}
-\Delta_{a} u= \frac{a_n(|u|)}{|u|}u+\lambda f(u) & \mbox{in }\Omega,\\
u=0&\mbox{ in }\partial\Omega,
\end{cases}
\end{equation}
where $\Delta_a u =\diver\left(\frac{a(|\nabla u|)\nabla u}{|\nabla u|}\right)$, $A'(t)=a(t)$, $A_n'(t)=a_n(t)$ and if $F(t)=\int_0^{|t|} f(s)\, ds$, then $|F(t)|\le B(t)$ for some Young function $B$ such that $B\ll A_n$. This application is similar to the one considered in \cite{FIN}.

In this case, the associated functional reads
\begin{equation}\label{Flambda}
\F_\lambda(u)=\int_\Omega A(|\nabla u|)-A_n(|u|)-\lambda F(u)\,dx.
\end{equation}

For this problem we can prove the following result
\begin{teo}\label{r>p}
Let $A$ be a Young function satisfying the hypotheses of Theorem \ref{ccp}.

Assume that $f(t)$ verifies the {\em Ambrosetti-Rabinowitz} condition, i.e. there exists $\gamma>1$ such that
$$
f(t)t\le \gamma F(t),\qquad \text{with } p^+<\gamma<p_n^-.
$$

Moreover, assume that
$$
p^-\leq\frac{ta(t)}{A(t)}\leq p^+,\quad r^-\leq\frac{tb(t)}{B(t)}\leq r^+, \quad p^+<r^-<p_n^- \quad \text{ and } \quad B\ll A_n,
$$
where $B'(t)=b(t)$.

Then, there exists $\lambda_0>0$, such that if $\lambda>\lambda_0$ problem \eqref{aplicacion} has at least one nontrivial solution in $W^{1,A}_0(\Omega)$.
\end{teo}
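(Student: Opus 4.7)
The plan is to obtain a nontrivial critical point of $\F_\lambda$ on $W^{1,A}_0(\Omega)$ via the Mountain Pass Theorem, with Theorem \ref{ccp} providing the Palais–Smale condition below an explicit energy threshold that is then forced upon the mountain-pass level by taking $\lambda$ large.

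First I would verify the mountain-pass geometry. For $\|u\|_{1,A}$ small, combining Lemma \ref{normayro}, the continuous Orlicz-Sobolev embedding $W^{1,A}_0(\Omega)\hookrightarrow L^{A_n}(\Omega)$, Poincar\'e, and the hypothesis $|F|\le B$ with $B\ll A_n$, one obtains
\begin{equation*}
\F_\lambda(u)\ge c_1\|u\|_{1,A}^{p^+}-c_2\|u\|_{1,A}^{p_n^-}-c_3\lambda\|u\|_{1,A}^{r^-}\ge\alpha>0
\end{equation*}
on some small sphere, since $p^+<p_n^-$ and $p^+<r^-$. For any nontrivial nonnegative $\varphi\in C^\infty_c(\Omega)$, Lemma \ref{sacar} gives $\int A(t|\nabla\varphi|)\le Ct^{p^+}$ for $t\ge 1$ while \eqref{delta} applied to $A_n$ combined with the Ambrosetti–Rabinowitz condition gives $\int A_n(t|\varphi|)+\lambda\int F(t\varphi)\ge ct^{p_n^-}$ for $t\ge 1$, so $\F_\lambda(t\varphi)\to-\infty$. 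Boundedness of PS sequences then follows from computing $\F_\lambda(u_k)-\gamma^{-1}\langle\F_\lambda'(u_k),u_k\rangle$ using \eqref{delta} for both $A$ and $A_n$, AR with $p^+<\gamma<p_n^-$, and $B\ll A_n$ to absorb the $F$-contribution into $\ve\int A_n(|u_k|)+C_\ve$.

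The crucial step is the PS condition below a threshold. Given a bounded PS sequence, extract a subsequence with $u_k\rightharpoonup u$ weakly in $W^{1,A}_0$, strongly in $L^A(\Omega)$ and $L^B(\Omega)$ (by compactness of the subcritical embeddings) and a.e.\ in $\Omega$; Theorem \ref{ccp} then produces atoms $\{x_i\}_{i\in I}$ with weights $\mu_i,\nu_i$ satisfying \eqref{relacion}. To get a converse relation, test $\F_\lambda'(u_k)$ against $\phi_{\ve,i}u_k$ with $\phi_{\ve,i}$ a smooth cutoff concentrated at $x_i$. Passing to $k\to\infty$ and then $\ve\to 0$, using $p^-A\le ta\le p^+A$ and its analogue for $A_n$, the fact that the term $\lambda f(u_k)u_k$ vanishes by strong convergence in a subcritical Orlicz space, and that the cross term $\int a(|\nabla u_k|)u_k\,\nabla\phi_{\ve,i}$ vanishes by Orlicz H\"older combined with the strong $L^A$-convergence of $u_k$, yields a reverse relation of the form $p^+\mu_i\ge p_n^-\nu_i$. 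Plugging this back into \eqref{relacion} and using the Matuszewska growth \eqref{recemos} of $M_n$ yields the dichotomy: either $\nu_i=0$ or $\nu_i\ge\nu^*$ for an explicit positive constant depending only on $S_A$, $p^\pm$ and $(p^\pm)_*$. Each nontrivial atom then contributes at least a fixed $c^*>0$ to the PS level (computed once more via $\F_\lambda-\gamma^{-1}\langle\F_\lambda',\cdot\rangle$), so if the PS level satisfies $c<c^*$ no atom can be present, $\int A_n(|u_k|)\to\int A_n(|u|)$, and combined with the Br\'ezis–Lieb Lemma \ref{Brezis-Lieb} the weak convergence is upgraded to strong, so that $u$ is a critical point of $\F_\lambda$. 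I expect this step, and in particular the rigorous handling of the Orlicz cross terms and the algebra relating $A_\infty$, $A$, $M_n$ and $A_n$ via their Matuszewska indices, to be the main technical obstacle.

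Finally, to close the argument I would show that the mountain-pass level $c_\lambda$ tends to $0$ as $\lambda\to\infty$. Fix a nontrivial nonnegative $\varphi\in C^\infty_c(\Omega)$ and note $c_\lambda\le\max_{t\ge 0}\F_\lambda(t\varphi)$. Using $\int A(t|\nabla\varphi|)\le Ct^{p^+}$ for $t\ge 1$ together with the lower bound $F(t\varphi)\ge ct^\gamma$ on $\{\varphi\ge 1\}$ derived from AR, the one-variable optimization of $C_1t^{p^+}-C_2\lambda t^\gamma$ with $\gamma>p^+$ yields
\begin{equation*}
\max_{t\ge 0}\F_\lambda(t\varphi)\le K\lambda^{-p^+/(\gamma-p^+)}\to 0\quad\text{as }\lambda\to\infty.
\end{equation*}
Choosing $\lambda_0$ so that this quantity is below $c^*$ for $\lambda>\lambda_0$ completes the proof via the Mountain Pass Theorem.
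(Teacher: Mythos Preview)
Your overall plan is sound and follows the standard Brezis--Nirenberg template: mountain-pass geometry, boundedness of PS sequences, then use Theorem~\ref{ccp} to establish the Palais--Smale condition below an explicit threshold $c^*$, and finally force $c_\lambda<c^*$ for $\lambda$ large. The paper, however, proceeds \emph{differently} on the central step. It never proves a PS condition and never rules out concentration. Instead, it shows (Lemmas~\ref{J is finite}--\ref{convergencia} and Corollary~\ref{u.sol}) that the weak limit $u$ of \emph{any} PS sequence is automatically a weak solution: the atoms are finite, so on compacts away from them one obtains a.e.\ convergence of $\nabla u_k$, which upgrades to weak convergence of $a(|\nabla u_k|)\tfrac{\nabla u_k}{|\nabla u_k|}$ in $(L^{\tilde A})^n$ and allows passage to the limit in $\langle\F_\lambda'(u_k),\varphi\rangle$. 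Nontriviality is then handled by a separate direct argument (Lemma~\ref{cl<c}): assuming $u=0$, one shows $\int a(|\nabla u_k|)|\nabla u_k|$ and $\int a_n(|u_k|)|u_k|$ share a common limit $m>0$, and the Sobolev inequality forces $m\ge M$ with $M$ independent of $\lambda$, whence $c_\lambda\ge(\tfrac{1}{p^+}-\tfrac{1}{p_n^-})M$. Your route is more streamlined once it works (strong convergence gives everything at once), while the paper's has the conceptual advantage that $u$ is identified as a solution regardless of whether concentration is present.

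Two points in your sketch need correction. First, the localized test with $\phi_{\ve,i}u_k$ yields $p^-\mu_i\le p_n^+\nu_i$, not $p^+\mu_i\ge p_n^-\nu_i$; it is the former direction that combines with \eqref{relacion} to give the dichotomy. Second, the cross term $\int a(|\nabla u_k|)\tfrac{\nabla u_k}{|\nabla u_k|}\cdot\nabla\phi_{\ve,i}\,u_k$ does not vanish as $k\to\infty$ (since $u_k\to u$, not $0$, in $L^A$); one must first pass to the limit in $k$ and then show the resulting quantity tends to $0$ as $\ve\to 0$, which requires more than a single H\"older estimate (the paper handles this via the equation satisfied in the limit). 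Finally, ``no atoms $\Rightarrow$ strong convergence in $W^{1,A}_0$'' is not automatic from Brezis--Lieb alone: you get strong $L^{A_n}$ convergence, and must then invoke the $(S_+)$ property of $-\Delta_a$ (test $\F_\lambda'(u_k)$ against $u_k-u$) to recover strong gradient convergence.
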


Our proof relies in the following theorem, its proof can be found in \cite{AE}.
\begin{teo}\label{MPL}
Let $E$ be a Banach space and $I \in C^1(E, \mathbb{R})$. Suppose that there exist a neighbourhood $U$ of $0$ in $E$ and a constant $\alpha$ satisfying the following conditions 
\begin{itemize}
\item[(i)] $I(u) \geq \alpha$ for all $u \in \partial U$,
\item[(ii)] $I(0)< \alpha$,
\item[(iii)] there exists $u_0 \notin U$ satisfying $I(u_0)< \alpha$.
\end{itemize}Let
$$
\Gamma = \left\lbrace \gamma \in C([0, 1], E): \gamma(0)=0,\,\gamma(1)=u_0\right\rbrace
$$
and
$$
c=\inf_{\gamma \in \Gamma}\max_{u \in \gamma([0, 1])}I(u) \geq \alpha.
$$
Then, there exists a sequence $u_k \in E$ such that
$$
I(u_k)\to c \quad \text{and }\quad I'(u_k)\to 0 \,\,\text{in }E',
$$
as $k \to \infty$.
\end{teo}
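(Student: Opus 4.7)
The plan is to apply the abstract mountain-pass Theorem \ref{MPL} to the functional $\F_\lambda$ defined in \eqref{Flambda} and then use the concentration--compactness principle Theorem \ref{ccp} to rule out loss of compactness of the resulting Palais--Smale sequence, provided $\lambda$ is taken large enough.

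\textbf{Step 1: Mountain-pass geometry.} I would first verify hypotheses (i)--(iii) of Theorem \ref{MPL} with $E=W^{1,A}_0(\Omega)$ and $I=\F_\lambda$. Clearly $\F_\lambda(0)=0$. For small $\|u\|_{1,A}=\rho$, Lemma \ref{normayro} gives $\int_\Omega A(|\nabla u|)\,dx \gtrsim \rho^{p^+}$, while the Sobolev embeddings (continuous into $L^{A_n}$, compact into $L^B$ since $B\ll A_n$) together with the bounds $r^-\le tb(t)/B(t)\le r^+$ yield $\int_\Omega A_n(|u|)\,dx \lesssim \rho^{(p^-)_*}$ and $\lambda\int_\Omega |F(u)|\,dx \lesssim \lambda\rho^{r^-}$. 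Since both exponents $(p^-)_*$ and $r^-$ exceed $p^+$, one obtains $\F_\lambda(u)\ge\alpha>0$ on a small sphere for some $\rho,\alpha$ (that depend on $\lambda$). For condition (iii), fix any $v\in C^\infty_c(\Omega)\setminus\{0\}$, $v\ge 0$. Since $A\ll A_n$ (Remark after the embedding theorem) and $r^+<(p^-)_*$, the $A_n$--term dominates and $\F_\lambda(tv)\to-\infty$ as $t\to+\infty$; take $u_0=t_0 v$ with $t_0$ large.

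\textbf{Step 2: PS sequence and boundedness.} Theorem \ref{MPL} then yields $(u_k)\subset W^{1,A}_0(\Omega)$ with $\F_\lambda(u_k)\to c$ and $\F_\lambda'(u_k)\to 0$. Using the Ambrosetti--Rabinowitz condition together with $p^+<\gamma<p_n^-$ and $r^-\le tb(t)/B(t)\le r^+$, a standard manipulation of $\F_\lambda(u_k)-\tfrac{1}{\gamma}\F_\lambda'(u_k)u_k$ (applying \eqref{delta} to $A$ and $A_n$) gives the coercive bound $(\tfrac{1}{p^+}-\tfrac{1}{\gamma})\int_\Omega A(|\nabla u_k|)\,dx \le C(1+\|u_k\|_{1,A})$, from which Lemma \ref{normayro} yields boundedness of $(u_k)$ in $W^{1,A}_0(\Omega)$.

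\textbf{Step 3: Concentration--compactness and threshold.} By reflexivity, extract $u_k\rightharpoonup u$ weakly in $W^{1,A}_0(\Omega)$, and apply Theorem \ref{ccp} to obtain measures $\mu,\nu$ and atoms $\{x_i\}_{i\in I}$ with weights $\mu_i,\nu_i$ satisfying \eqref{nu}--\eqref{relacion}. Test $\F_\lambda'(u_k)$ against cut-offs $\phi_{\varepsilon,i}u_k$ where $\phi_{\varepsilon,i}(x)=\phi((x-x_i)/\varepsilon)$, let $k\to\infty$ and then $\varepsilon\to 0$: the compact embedding $W^{1,A}_0\hookrightarrow L^B$ kills the $\lambda f(u_k)$ contribution and the lower order term involving $u_k\nabla\phi_{\varepsilon,i}$ vanishes, yielding $\mu_i\le\nu_i$ at every atom. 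Combined with \eqref{relacion} and the asymptotic behaviour of $M_n$ and $A_\infty$ (Remarks \ref{MvsA} and \ref{pn}), one obtains that either $\nu_i=0$ or $\nu_i\ge\nu_*$ for a constant $\nu_*>0$ depending only on $S_A$, $p^\pm$ and $n$. Again from the AR step,
\begin{equation*}
c = \lim_{k\to\infty}\Big(\F_\lambda(u_k)-\tfrac{1}{\gamma}\F_\lambda'(u_k)u_k\Big) \ge \Big(\tfrac{1}{\gamma}-\tfrac{1}{p_n^-}\Big)\sum_{i\in I}\nu_i,
\end{equation*}
so the existence of any atom forces $c\ge c_*:=(\tfrac{1}{\gamma}-\tfrac{1}{p_n^-})\nu_*$. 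On the other hand, taking $\gamma(t)=tu_0$ as a comparison path in the definition of $c$ and using that $F\ge 0$ along a suitable ray together with $\lambda\to\infty$, one checks that $\max_{t\ge 0}\F_\lambda(tu_0)\to 0$ as $\lambda\to\infty$. Hence there exists $\lambda_0>0$ such that $c<c_*$ for all $\lambda>\lambda_0$, which forces $I=\emptyset$.

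\textbf{Step 4: Passage to the limit.} With no atoms, $A_n(|u_k|)\,dx\rightharpoonup A_n(|u|)\,dx$, so the critical term in $\F_\lambda$ passes to the limit. Combined with $\F_\lambda'(u_k)\to 0$ and the strict monotonicity ($S_+$ property) of the operator $-\Delta_a$ on $W^{1,A}_0(\Omega)$, one upgrades to $u_k\to u$ strongly; hence $u$ is a critical point of $\F_\lambda$ with $\F_\lambda(u)=c\ge\alpha>0$, and in particular $u\not\equiv 0$.

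The main obstacle is \textbf{Step 3}, more precisely the quantitative threshold $c<c_*$. Getting the correct lower bound on $\nu_*$ from inequality \eqref{relacion} requires a careful use of the asymptotic estimates $A_\infty(t)\le t^{p^+}+t^{p^-}$ and $M_n(t)\sim t^{p_\infty(A_n)}$ for large $t$, and matching these with the upper estimate on $c$ obtained by letting $\lambda\to\infty$ along a well-chosen fibre $t\mapsto tu_0$, where one must verify that the unique maximum $t_\lambda$ of $\F_\lambda(tu_0)$ tends to $0$ fast enough in $\lambda$. Everything else is either the standard mountain-pass scheme or a direct application of Theorem \ref{ccp} and the Brezis--Lieb lemma \ref{Brezis-Lieb}.
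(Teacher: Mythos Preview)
You have proved the wrong statement. The theorem in question is the \emph{abstract} mountain-pass theorem: given a $C^1$ functional $I$ on a Banach space $E$ with the geometric hypotheses (i)--(iii), there exists a Palais--Smale sequence at the minimax level $c$. The paper does not prove this result at all; it simply cites \cite{AE} and moves on. A proof would involve Ekeland's variational principle or a deformation/pseudo-gradient argument applied to the abstract functional $I$---nothing about Orlicz spaces, $\F_\lambda$, or concentration--compactness.

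What you have written is instead a (reasonable) sketch of the \emph{application}, i.e.\ of Theorem \ref{r>p}: you take $E=W^{1,A}_0(\Omega)$, $I=\F_\lambda$, verify the mountain-pass geometry, extract a bounded PS sequence, analyze concentration via Theorem \ref{ccp}, and conclude existence of a nontrivial critical point for large $\lambda$. That argument is broadly in the spirit of the paper's Section 5 (Lemmas \ref{I satisfies MPL}, \ref{acotada1}, \ref{J is finite}, \ref{cl0}, \ref{cl<c} and Corollary \ref{u.sol}), though with some differences in detail (e.g.\ the paper does not rule out all atoms but instead shows the weak limit $u$ is already a weak solution and uses an energy threshold to guarantee $u\neq 0$). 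But none of this is a proof of Theorem \ref{MPL} itself.
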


Next, we begin to show some important properties of the functional $\F_\lambda$ defined in \eqref{Flambda}.
\begin{lema}\label{I satisfies MPL}
The functional $\F_\lambda$ defined in \eqref{Flambda} is $C^1$ and satisfies the geometric conditions {\em (i)--(iii)} from Theorem \ref{MPL}.
\end{lema}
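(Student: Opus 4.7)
The plan is to verify the $C^1$ regularity of $\F_\lambda$ and the three mountain-pass conditions separately. Condition (ii) is trivial since $\F_\lambda(0)=0$, so it suffices to produce a positive $\alpha$ in (i). The $C^1$ regularity of each of the three summands in $\F_\lambda$ is routine under our hypotheses: the $\Delta_2$-condition on $A$ makes $u\mapsto\int_\Omega A(|\nabla u|)\,dx$ a $C^1$ functional on $W_0^{1,A}(\Omega)$; the $\Delta_2$-condition on $A_n$ proved in Section 3, combined with the continuous embedding $W_0^{1,A}(\Omega)\hookrightarrow L^{A_n}(\Omega)$, handles the critical term; and the bound $|F|\le B$ with $B\ll A_n$, together with the $\Delta_2$-property of $B$ inherited from its exponents $r^-$ and $r^+$, handles the nonlinearity.

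\textbf{Condition (i).} I take $U=\{u:\|\nabla u\|_A<\rho\}$ with $\rho\le 1$ small. For $u\in\partial U$, Lemma \ref{normayro} gives $\int_\Omega A(|\nabla u|)\,dx\ge \rho^{p^+}$, the Sobolev immersion combined with the growth of $A_n$ (Remark \ref{pn}) yields $\int_\Omega A_n(|u|)\,dx\le C_1\rho^{(p^-)_*}$, and the embedding into $L^B(\Omega)$ gives $\int_\Omega |F(u)|\,dx\le C_2\rho^{r^-}$. Hence
\begin{equation*}
\F_\lambda(u)\ge \rho^{p^+}-C_1\rho^{(p^-)_*}-\lambda C_2\rho^{r^-}.
\end{equation*}
Since the hypotheses imply $p^+<(p^-)_*$ and $p^+<r^-$, both subtracted terms are of higher order in $\rho$, so for $\rho$ small (depending on $\lambda$) the right-hand side is bounded below by some $\alpha>0$.

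\textbf{Condition (iii).} Fix a nonzero $\phi\in C_c^\infty(\Omega)$ and study $\F_\lambda(t\phi)$ as $t\to\infty$. Lemma \ref{sacar} gives $\int_\Omega A(|\nabla(t\phi)|)\,dx\le t^{p^+}\int_\Omega A(|\nabla\phi|)\,dx$ for $t\ge 1$, while Remark \ref{pn} yields $\int_\Omega A_n(|t\phi|)\,dx\ge c_\phi\,t^{(p^-)_*}$ for $t$ large. Since $B\ll A_n$, for every $\varepsilon>0$ there is $T>0$ with $B(s)\le\varepsilon A_n(s)$ when $s\ge T$; taking $\varepsilon=1/(2\lambda)$ and splitting the integral of $B(|t\phi|)$ across $\{|t\phi|\le T\}$ gives
\begin{equation*}
\lambda\int_\Omega |F(t\phi)|\,dx\le \tfrac{1}{2}\int_\Omega A_n(|t\phi|)\,dx + C(\lambda,\phi).
\end{equation*}
Combining these bounds,
\begin{equation*}
\F_\lambda(t\phi)\le C_3\, t^{p^+} -\tfrac{1}{2}c_\phi\, t^{(p^-)_*} + C(\lambda,\phi)\longrightarrow -\infty,
\end{equation*}
since $(p^-)_*>p^+$. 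Choosing $t_0$ large enough so that $\F_\lambda(t_0\phi)<\alpha$ and $t_0\phi\notin U$, the point $u_0:=t_0\phi$ satisfies (iii).

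\textbf{Main obstacle.} The delicate step is handling the nonlinearity $\lambda F$. We only have the pointwise bound $|F|\le B$, with no information on sign, so in (iii) we cannot simply discard this term; the key is to exploit the quantitative statement $B(s)=o(A_n(s))$ at infinity in order to absorb $\lambda F$ into a fraction of the critical $A_n$-term, as sketched above. A secondary bookkeeping issue is tracking whether each relevant norm is above or below $1$ so as to apply the correct branch of Lemmas \ref{normayro} and \ref{sacar}.
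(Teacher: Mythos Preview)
Your proof is correct and, for conditions (i)--(ii), essentially identical to the paper's: both of you bound $\F_\lambda$ from below on a small sphere via Lemma~\ref{normayro} together with the embeddings into $L^{A_n}$ and $L^B$. A minor bookkeeping remark: in (i) you cite Remark~\ref{pn} to control $\int_\Omega A_n(|u|)\,dx$, but that remark only gives pointwise bounds for $t>1$; for small norms the cleaner route (and the one the paper takes) is Lemma~\ref{normayro} applied to $A_n$, giving $\int_\Omega A_n(|u|)\,dx\le (C\rho)^{p_n^-}$, after which the standing hypothesis $p^+<r^-<p_n^-$ closes the argument. Your exponent $(p^-)_*$ and the paper's $p_n^-$ play the same role here.

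For condition (iii) your route genuinely differs from the paper's. The paper fixes $u$ with $\|\nabla u\|_A=1$, writes an upper bound of the form $\F_\lambda(tu)\le t^{p^+}\int_\Omega A(|\nabla u|)\,dx - t^{p_n^-}\int_\Omega A_n(|u|)\,dx$ (modulo an inequality-direction/exponent slip in the published line) and sends $t\to\infty$; in effect the $\lambda F$ term is handled as if $F\ge 0$ and can simply be dropped. You instead keep the contribution $\lambda|F|\le\lambda B$ and absorb it into half of the critical $A_n$-term using the quantitative fact $B\ll A_n$. This costs one extra line but makes the argument independent of any sign assumption on $F$, so your treatment of (iii) is a mild strengthening of the paper's version.
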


\begin{proof}
It is easy to see that $\F_\lambda$ is $C^1$. First, observe that $\F_\lambda(0)=0 $, and  for $\alpha >0$ it is clear that $\F_\lambda$ verifies (ii). 
 
To prove that $\F_\lambda$ satisfies (i), let $U=B(0, \rho)$, with $0<\rho<1 $ to be chosen. If $u \in \partial B(0, \rho)$, then
$$
\|\nabla u\|_A=\rho.
$$
Hence,
\begin{equation*}
\begin{split}
\F_\lambda(u)&=\int_\Omega A(|\nabla u|)-A_n(|u|)-\lambda F(|u|)\,dx\\ &\geq \|\nabla u\|_A^{p^+}-\max\{\| u\|_{A_n}^{p_n^-},\| u\|_{A_n}^{p_n^+}\}-\lambda \max\{\|u\|_{B}^{r^-},\|u\|_{B}^{r^+}\} \\ & \geq \|\nabla u\|_A^{p^+}-\max\{\|\nabla u\|_{A}^{p_n^-},\|\nabla u\|_{A}^{p_n^+}\}-\lambda C\max\{\|\nabla u\|_{A}^{r^-},\|\nabla u\|_{A}^{r^+}\} \\&\geq \|\nabla u\|_A^{p^+}-C\|\nabla u\|_A^{p_n^-}-\lambda C\|\nabla u\|_A^{r^-}\\&\geq\rho^{p^+}-C\rho^{p_n^-}-\lambda C\rho^{r^-} \geq \alpha,
\end{split}
\end{equation*}
for some $\alpha >0$, choosing $\rho$ small enough and recalling $p^+<r^-<p_n^-$.

Finally, we prove that $\F_\lambda$ satisfies (iii). Let us fix $u\in W^{1,A}_0(\Omega)$ such that $\|\nabla u\|_{A}=1$. For $t >0$,  we consider $tu$. Then,
\begin{align*}
\F_\lambda(tu)&=\int_\Omega A(|\nabla tu|)-A_n(|tu|)-\lambda F(|tu|)\,dx\\ &\geq t^{p^+}\int_\Omega A(|\nabla u|)-t^{p_n^+}A_n(|u|)-\lambda t^{r^-} B(|u|)\,dx
\end{align*}

As , $p^+<r^-<p_n^-$ taking limit as $t$ goes to infinity, we can assure that $\F_\lambda(tu)<0$. This completes the proof.
\end{proof}
 
We will denote by $c_\lambda$ the mountain pass level associated to $\F_\lambda$, i.e.
\begin{equation}\label{PS-energy}
c_\lambda := \inf_{\gamma\in\Gamma} \max_{u\in \gamma([0,1])} \F_\lambda(u) \ge 0.
\end{equation}
 
Now, we show that Palais-Smale sequences are bounded.

\begin{lema}\label{acotada1}
Let $\{u_k\}_{k\in\N}\subset W_0^{1,A}(\Omega)$ be a Palais-Smale sequence of $\F_\lambda$, then $\{u_k\}_{k\in\N}$ is bounded in $W_0^{1,A}(\Omega)$.
\end{lema}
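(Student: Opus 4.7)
The plan is to exploit the classical Ambrosetti--Rabinowitz trick, combined with the $\Delta_2$ bounds \eqref{delta} for $A$ and the analogous bounds for $A_n$ (whose $\Delta_2$ property was established earlier in the excerpt). From $\F_\lambda(u_k)\to c$ and $\F_\lambda'(u_k)\to 0$ in the dual of $W_0^{1,A}(\Omega)$, one gets a sequence $\ve_k\to 0^+$ with $|\langle \F_\lambda'(u_k),u_k\rangle|\le \ve_k\|u_k\|_{1,A}$. The key is to consider the combination $\F_\lambda(u_k)-\tfrac{1}{\gamma}\langle \F_\lambda'(u_k),u_k\rangle$, where $\gamma$ is the Ambrosetti--Rabinowitz exponent, which by hypothesis satisfies $p^+<\gamma<p_n^-$.

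Expanding this combination splits it into three integrals over $\Omega$: one involving $A$ and $a$, one involving $A_n$ and $a_n$, and one involving $F$ and $f$. I would analyze each separately. First, from $ta(t)\le p^+ A(t)$ and $\gamma>p^+$, the $A$-integrand satisfies
\[
A(|\nabla u_k|)-\tfrac{1}{\gamma}a(|\nabla u_k|)|\nabla u_k| \;\ge\; \Bigl(1-\tfrac{p^+}{\gamma}\Bigr) A(|\nabla u_k|)\;\ge\;0.
\]
Second, from $ta_n(t)\ge p_n^- A_n(t)$ and $\gamma<p_n^-$, the bracketed $A_n$-integrand is nonpositive, so after the outer minus sign in the combination it contributes $\ge 0$ (indeed, dominates $(p_n^-/\gamma-1)A_n(|u_k|)$). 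Third, the Ambrosetti--Rabinowitz hypothesis is precisely what ensures that the $F$-integrand $F(u_k)-\tfrac{1}{\gamma}f(u_k)u_k$ is uniformly bounded above by a constant $C_\lambda$ (once one absorbs the contribution from the bounded set where the asymptotic AR inequality may fail).

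Putting the three sign analyses together yields
\[
\Bigl(1-\tfrac{p^+}{\gamma}\Bigr)\int_\Omega A(|\nabla u_k|)\,dx \;\le\; c+1+C_\lambda+\ve_k\|u_k\|_{1,A}
\]
for all $k$ large. By Lemma \ref{normayro}, the left-hand side controls $\|\nabla u_k\|_A^{p^-}$ as soon as $\|\nabla u_k\|_A\ge 1$. On the other hand, the compact embedding $W_0^{1,A}(\Omega)\hookrightarrow L^A(\Omega)$ (noted in the Remark after the Orlicz--Sobolev embedding theorem) yields a Poincar\'e-type inequality $\|u_k\|_A\le C\|\nabla u_k\|_A$, so $\|u_k\|_{1,A}$ and $\|\nabla u_k\|_A$ are equivalent. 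Arguing by contradiction, if $\|\nabla u_k\|_A\to\infty$ along a subsequence, we would obtain $\|\nabla u_k\|_A^{p^-}\le C(1+\|\nabla u_k\|_A)$, which is impossible because $p^->1$.

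The main obstacle, and the only nontrivial point, is making the three sign checks succeed simultaneously: this is exactly what forces the hypothesis $p^+<\gamma<p_n^-$. Without the strict separation of the $\Delta_2$ indices of $A$ and $A_n$ by $\gamma$, the critical term $A_n(|u|)$ (over which we have no compactness) could absorb the gradient term $A(|\nabla u|)$ and destroy the coercivity of the combination; no concentration-compactness argument is needed at this stage, as Theorem \ref{ccp} is reserved for the subsequent task of upgrading the resulting bounded Palais--Smale sequence to a strongly convergent one.
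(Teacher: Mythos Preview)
Your proposal is correct and follows essentially the same route as the paper: both form the combination $\F_\lambda(u_k)-\tfrac{1}{\gamma}\langle \F_\lambda'(u_k),u_k\rangle$, use the bounds $ta(t)\le p^+A(t)$ and $ta_n(t)\ge p_n^-A_n(t)$ together with $p^+<\gamma<p_n^-$ to make the $A_n$-term nonnegative and the $F$-term harmless, and then invoke Lemma~\ref{normayro} to conclude that $\|\nabla u_k\|_A^{p^-}$ is controlled by a linear expression in $\|\nabla u_k\|_A$. The only cosmetic differences are that the paper writes the estimate more tersely (jumping directly to $c_\lambda+1\ge(1-\tfrac{p^+}{\gamma})\int_\Omega A(|\nabla u_k|)\,dx-\tfrac{1}{\gamma}|\langle\F_\lambda'(u_k),u_k\rangle|$ without spelling out the three sign checks) and that the paper's Ambrosetti--Rabinowitz hypothesis is stated globally, so your constant $C_\lambda$ coming from a ``bounded set where the asymptotic AR inequality may fail'' is in fact zero here.
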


\begin{proof}
Let $\{u_k\}_{k\in\N}\subset W^{1,A}_0(\Omega)$ be a Palais-Smale sequence for $\F_\lambda$. Then, by definition
$$
\F_\lambda(u_k)\to c_\lambda \quad  \text{and} \quad  \F_\lambda'(u_k)\to 0.
$$
Now, we have
$$
c_\lambda +1 \geq \F_\lambda(u_k) = \F_\lambda(u_k) - \frac{1}{\gamma} \langle \F_\lambda'(u_k), u_k \rangle + \frac{1}{\gamma} \langle \F_\lambda'(u_k), u_k \rangle,
$$
where
\begin{align*}
\langle \F_\lambda'(u_k), u_k \rangle &= \int_\Omega \frac{a(|\nabla u_k|)\nabla u_k\nabla u_k}{|\nabla u_k|} - \frac{a_n(|u_k|)u_ku_k}{|u_k|} - \lambda f(u_k)u_k\, dx\\
&\leq \int_\Omega p^+ A(|\nabla u_k|) - p_n^-A_n(|u_k|)-\lambda f(u_k)u_k\, dx.\\
\end{align*}
Then, if $p^+<\gamma<p_n^-$ we conclude that
$$
c_\lambda+1 \geq \left(1- \frac{p^+}{\gamma}\right) \int_\Omega A(|\nabla u_k|)\, dx - \frac{1}{\gamma} |\langle \F_\lambda'(u_k), u_k \rangle|.
$$
We can assume that $\|\nabla u_k\|_{A,\Omega}\geq 1$, if not the sequence is bounded. As $\|\mathcal{F}_\lambda'(u_k)\|_{-1,\tilde A}$ is bounded, using Lemma \ref{normayro} we have that
$$
c_\lambda +1 \geq \left(1- \frac{p^+}{\gamma}\right) \|\nabla u_k\|^{p^-}_A - \frac{C}{\gamma}\|\nabla u_k\|_A.
$$
We deduce that $u_k$ is bounded.

This finishes the proof.
\end{proof}

From the fact that $\{u_k\}_{k\in\N}$ is a Palais-Smale sequence of $\F_\lambda$, it follows, from Lemma \ref{acotada1}, that $\{u_k\}_{k\in\N}$ is bounded in $W_0^{1,A}(\Omega)$. Hence passing to a subsequence if necessary, by Theorem \ref{ccp}, we have that
\begin{align}
\label{5.2}&A_n(|u_k|)\rightharpoonup \nu=A_n(|u|) + \sum_{i\in I} \nu_i\delta_{x_i} \quad \nu_i>0,\\
\label{5.3}&A(|\nabla u_k|)\rightharpoonup \mu \geq A(|\nabla u|)+ \sum_{i\in I} \mu_i \delta_{x_i}\quad \mu_i>0,\\
\label{5.4}&S_A \frac{1}{M_n^{-1}\left(\frac{1}{\nu_i}\right)}\le\frac{1}{A_\infty^{-1}\left(\frac{1}{\mu_i}\right)}.
\end{align}
Using these properties of the sequence $\{u_k\}_{k\in\N}$ we derive the following result.

\begin{lema}\label{J is finite}
The set $\left\lbrace x_i\right\rbrace_{i \in I}$  is finite.
\end{lema}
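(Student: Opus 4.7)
The plan is to obtain a uniform lower bound $\mu_i \ge \delta > 0$, valid for every $i\in I$. Once this is established, the conclusion is automatic: by Lemma \ref{acotada1} the Palais-Smale sequence $\{u_k\}$ is bounded in $W^{1,A}_0(\Omega)$, whence $\int_\Omega A(|\nabla u_k|)\,dx$ is uniformly bounded; consequently the weak-$*$ limit $\mu$ is a finite Borel measure, and \eqref{5.3} forces $\sum_{i\in I}\mu_i\le \mu(\bar\Omega)<\infty$, so the index set $I$ must be finite.

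Fix $i\in I$, choose $\phi\in C^\infty_c(\R^n)$ with $0\le\phi\le 1$, $\phi(0)=1$, $\supp(\phi)\subset B_1(0)$, and set $\phi_{\varepsilon,i}(x):=\phi((x-x_i)/\varepsilon)$, as in the proof of Theorem \ref{ccp}. I will test $\F_\lambda'(u_k)$ against the admissible function $u_k\phi_{\varepsilon,i}$: since $\|\F_\lambda'(u_k)\|_{-1,\tilde A}\to 0$ while $\|u_k\phi_{\varepsilon,i}\|_{1,A}$ is bounded in $k$ for each fixed $\varepsilon$, expanding $\langle \F_\lambda'(u_k), u_k\phi_{\varepsilon,i}\rangle\to 0$ gives
\begin{equation*}
\int_\Omega a(|\nabla u_k|)|\nabla u_k|\phi_{\varepsilon,i}\,dx - \int_\Omega a_n(|u_k|)|u_k|\phi_{\varepsilon,i}\,dx = -E_k^\varepsilon + \lambda\int_\Omega f(u_k)u_k\phi_{\varepsilon,i}\,dx + o(1)_{k\to\infty},
\end{equation*}
where $E_k^\varepsilon:=\int_\Omega \frac{a(|\nabla u_k|)}{|\nabla u_k|}u_k\nabla u_k\cdot\nabla\phi_{\varepsilon,i}\,dx$. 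The main technical step is to prove that both remainder terms vanish in the iterated limit $\limsup_{\varepsilon\to 0}\limsup_{k\to\infty}$. For $E_k^\varepsilon$ I will combine H\"older's inequality in Orlicz spaces, the bound $\tilde A(a(t))\le (p^+-1)A(t)$ (which gives uniform control of $a(|\nabla u_k|)$ in $L^{\tilde A}$), and the compact embedding $W^{1,A}_0(\Omega)\hookrightarrow L^A(\Omega)$ to replace $u_k$ by its strong limit $u$ and then exploit the absolute continuity of the Luxemburg norm on the shrinking supports $B_\varepsilon(x_i)$. For the subcritical term, the compact embedding $W^{1,A}_0(\Omega)\hookrightarrow L^B(\Omega)$ (available since $B\ll A_n$), together with the localization of $\phi_{\varepsilon,i}$ at $x_i$, suffices. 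This is the step where I expect to spend the most effort, since classical $L^p$-H\"older manipulations must be replaced by Orlicz analogues.

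Once the remainders are absorbed, the inequalities $p^- A(t)\le a(t)t$ and $a_n(t)t\le p_n^+ A_n(t)$ from \eqref{delta}, combined with the weak-$*$ convergences \eqref{5.2}--\eqref{5.3} and dominated convergence as $\phi_{\varepsilon,i}\to\chi_{\{x_i\}}$ pointwise (against the bounded measures $\mu,\nu$), yield
\begin{equation*}
p^-\mu_i\le p_n^+\nu_i.
\end{equation*}
Setting $s_i:=A_\infty^{-1}(1/\mu_i)$ so that $\mu_i=1/A_\infty(s_i)$, and rewriting \eqref{5.4} in the form $\nu_i\le 1/M_n(S_A s_i)$, this produces
\begin{equation*}
\frac{M_n(S_A s_i)}{A_\infty(s_i)} \le \frac{p_n^+}{p^-}.
\end{equation*}
By Lemma \ref{AinftyMn}, $A_\infty\ll M_n$, and hence the left-hand side tends to $+\infty$ as $s_i\to\infty$. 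Therefore $s_i$ must stay uniformly bounded in $i$, which gives the desired lower bound $\mu_i=1/A_\infty(s_i)\ge \delta>0$ independent of $i\in I$, completing the proof.
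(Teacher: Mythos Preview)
Your overall strategy coincides with the paper's: localize with $\phi_{\varepsilon,i}$, test $\F_\lambda'(u_k)$ against $u_k\phi_{\varepsilon,i}$, pass to the iterated limit to obtain $p^-\mu_i\le p_n^+\nu_i$, and combine this with \eqref{5.4} to bound the atoms from below. Your final step (lower-bounding $\mu_i$ via $A_\infty\ll M_n$ and invoking $\sum_i\mu_i<\infty$) is a correct variant of the paper's conclusion (which lower-bounds $\nu_i$ and uses $\sum_i\nu_i<\infty$); the two are interchangeable once $p^-\mu_i\le p_n^+\nu_i$ is in hand.

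The one place where your plan genuinely diverges, and where it is not clear it goes through as written, is the cross term $E_k^\varepsilon$. Your route is the classical Lions estimate: H\"older gives $|E_k^\varepsilon|\le C\|u_k\nabla\phi_{\varepsilon,i}\|_A$, strong $L^A$-convergence lets you replace $u_k$ by $u$, and then you invoke ``absolute continuity of the Luxemburg norm on $B_\varepsilon(x_i)$''. But since $|\nabla\phi_{\varepsilon,i}|\sim\varepsilon^{-1}$, what you actually need is $\|u\|_{A,B_\varepsilon}=o(\varepsilon)$, not merely $o(1)$. In the homogeneous $L^p$ setting this comes for free from the scaling $\|u\|_{L^p(B_\varepsilon)}\le C\varepsilon\,\|u\|_{L^{p^*}(B_\varepsilon)}$, but in the Orlicz setting with $p^-<p^+$ the power bounds from Lemma~\ref{sacar} and Remark~\ref{pn} do not balance: working through the H\"older estimate with $u\in L^{(p^-)_*}$ only yields $\int_{B_\varepsilon}A(|u|)=o(\varepsilon^\beta)$ with $\beta=n(1-p^+/(p^-)_*)$, and the requirement $\beta\ge p^+$ reduces precisely to $p^-\ge p^+$. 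So ``absolute continuity'' alone is not sufficient here.

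The paper sidesteps this scaling issue entirely by a PDE-based trick. It first passes to the limit $k\to\infty$ using $a(|\nabla u_k|)\tfrac{\nabla u_k}{|\nabla u_k|}\rightharpoonup w_1$ weakly in $(L^{\tilde A})^n$ together with $u_k\to u$ strongly in $L^A$, so that the cross term becomes $\int_\Omega w_1\cdot\nabla\phi_\varepsilon\,u\,dx$. From $\langle\F_\lambda'(u_k),v\rangle\to 0$ it extracts the limiting identity $\int_\Omega(w_1\cdot\nabla v-w_2 v)\,dx=0$ for all $v\in W^{1,A}_0(\Omega)$ (with $w_2$ the weak $L^{\tilde A_n}$-limit of $a_n(|u_k|)\tfrac{u_k}{|u_k|}+\lambda f(u_k)$), and then takes $v=u\phi_\varepsilon$ to rewrite
\[
\int_\Omega w_1\cdot\nabla\phi_\varepsilon\,u\,dx=-\int_\Omega(w_1\cdot\nabla u-w_2u)\,\phi_\varepsilon\,dx.
\]
Since $w_1\cdot\nabla u-w_2u\in L^1(\Omega)$ by Orlicz duality, the right-hand side tends to $0$ as $\varepsilon\to 0$ by dominated convergence---no competition with the $\varepsilon^{-1}$ factor is needed. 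This is the key device that makes the argument robust in the nonhomogeneous Orlicz framework, and it is what you should substitute for your ``absolute continuity'' step.
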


\begin{proof}
Let $x_j$ be fixed and take $\phi \in C_c^{\infty}(\mathbb{R}^n)$ be such that $0 \leq \phi \leq 1$ and
$$
\phi(x)=\begin{cases} 1, & \text{if } |x|\leq 1\\
0, &\text{if } |x| \geq 2.
\end{cases}
$$
For $\varepsilon>0$, define
$$
\phi_\varepsilon(x):=\phi\left(\dfrac{x-x_j}{\varepsilon} \right).
$$

Since $\F'_\lambda(u_k)\to 0$
\begin{align*}
\int_\Omega a(|\nabla u_k|)\frac{\nabla u_k}{|\nabla u_k|}\nabla (\phi_\varepsilon u_k)\,dx&=\int_\Omega a_n(|u_k|)\frac{u_k}{| u_k|}\phi_\varepsilon u_k\,dx+\lambda\int_\Omega f(u_k)u_k\phi_\varepsilon\,dx+o(1)\\
&\leq p_n^+ \int_\Omega A_n(|u_k|)\phi_\varepsilon\,dx+\lambda \gamma\int_\Omega  B(|u_k|)\phi_\varepsilon\,dx+o(1)
\end{align*}

As $B\ll A_n$, we have that $u_k\to u$ in $L^B(\Omega)$, so
$$
\lim_{k\to\infty} \int_\Omega  B(|u_k-u|)\phi_\varepsilon\,dx=0.
$$
By Lemma \ref{Brezis-Lieb}, we obtain that
\begin{align*}
\lim_{k\to\infty} \int_\Omega  B(|u_k|)\phi_\varepsilon\,dx&=\lim_{k\to\infty} \int_\Omega  B(|u_k|)\phi_\varepsilon\,dx-\int_\Omega  B(|u_k-u|)\phi_\varepsilon\,dx\\
&=\int_\Omega  B(|u|)\phi_\varepsilon\,dx.
\end{align*}

On the other hand
\begin{align*}
\int_\Omega a(|\nabla u_k|)\frac{\nabla u_k}{|\nabla u_k|}\nabla (\phi_\varepsilon u_k)\,dx&= \int_\Omega a(|\nabla u_k|)\frac{\nabla u_k}{|\nabla u_k|}\phi_\varepsilon\nabla u_k\,dx+\int_\Omega a(|\nabla u_k|)\frac{\nabla u_k}{|\nabla u_k|}\nabla\phi_\varepsilon u_k\,dx\\ 
&\geq p^-\int_\Omega A(|\nabla u_k|)\phi_\varepsilon\,dx+\int_\Omega a(|\nabla u_k|)\frac{\nabla u_k}{|\nabla u_k|}\nabla\phi_\varepsilon u_k\,dx
\end{align*}
Observe now that $a(|\nabla u_k|)\frac{\nabla u_k\nabla \phi_\ve}{|\nabla u_k|}$ is bounded in $L^{\tilde{A}}(\Omega)$. In fact, using that $\tilde{A}(a(s))=a(s)s - A(s)\leq (p^+-1) A(s)$, we obtain
\begin{align*}
\int_\Omega \tilde{A}\left(a(|\nabla u_k|)\frac{|\nabla u_k\nabla \phi_\ve|}{|\nabla u_k|}\right)\,dx &\leq (p^+-1)\int_\Omega \max\{|\nabla\phi_\ve|^{(p')^+}, |\nabla \phi_\ve|^{(p')^-}\} A(|\nabla u_k|)\,dx\\
&\leq C\int_\Omega A(|\nabla u_k|)\,dx\leq C
\end{align*}
Then $a(|\nabla u_k|)\frac{\nabla u_k}{|\nabla u_k|}\rightharpoonup w_1$ weakly in $(L^{\tilde A}(\Omega))^n$ and $u_k\to u$ in $L^A(\Omega)$. Then
$$
\int_\Omega a(|\nabla u_k|)\frac{\nabla u_k}{|\nabla u_k|}\nabla\phi_\varepsilon u_k\,dx\to\int_\Omega w_1\nabla\phi_\varepsilon u\,dx
$$
Taking limit as $k$ goes to $\infty$
\begin{equation}\label{nu_j-cota}
p^-\int_\Omega\phi_\varepsilon\,d\mu+\int_\Omega w_1\nabla\phi_\varepsilon u\,dx\leq p_n^+\int_{\Omega}\phi_\varepsilon\,d\nu+\lambda \gamma \int_\Omega  B(|u|)\phi_\varepsilon\,dx.
\end{equation}
Now, we want to prove that $\int_\Omega w_1\nabla\phi_\varepsilon u\,dx\to 0$ as $\varepsilon\to0$. In fact, observe first that, for any $v\in W^{1,A}_0(\Omega)$,
$$
0=\lim_{k\to\infty} \langle \F_\lambda(u_k),v \rangle=\lim_{k\to\infty}\left(\int_\Omega a(|\nabla u_k|)\frac{\nabla u_k}{|\nabla u_k|}\nabla v\,dx-\int_\Omega a_n(|u_k|)\frac{u_k}{|u_k|}v+\lambda f(u_k)v\,dx\right).
$$
Now, since  $\{a_n(|u_k|)\frac{u_k}{|u_k|}+\lambda f(u_k)\}_{k\in\N}$ is bounded in $L^{\tilde{A}_n}(\Omega)$,  there exists $w_2$ such that 
$$
a_n(|u_k|)\frac{u_k}{|u_k|}+\lambda f(u_k) \rightharpoonup w_2 \quad\text{ weakly in } L^{\tilde A_n}(\Omega).
$$ 
Then as $\langle \F_\lambda(u_k),v \rangle\to 0$,
$$
\int_\Omega w_1\nabla v-w_2v\,dx=0.
$$
Taking $v=u\phi_{\varepsilon}$
$$
\int_{\Omega} w_1\nabla\phi_\varepsilon u\,dx=-\int_{\Omega} (w_1\nabla u-w_2u)\phi_\varepsilon\,dx
$$
As $ w_1\nabla u-w_2u$ is in $L^1(\Omega)$, the right hand side goes to 0 as $\varepsilon$ goes to 0. Then
$$
\int_\Omega w_1\nabla\phi_\varepsilon u\,dx\to 0,
$$
as we wanted to show.

Moreover $\int_\Omega  B(|u|)\phi_\varepsilon\,dx$ goes to o as $\varepsilon$ goes to 0. Finally, from \eqref{nu_j-cota}, we obtain
$$
p^-\mu_j\leq p_n^+ \nu_j
$$
Then, using \eqref{relacion}, we arrive at 
$$
\inf\{\nu_j,j\in I\}>0.
$$
As $\sum_{j\in I}\nu_j<\infty$, we conclude that $I$ is finite, as we wanted to prove.
\end{proof}

Now we use the following lemma from \cite{FIN}
\begin{lema}[\cite{FIN}, Lemma 4.4]\label{conv en Lp}
For any compact $K \subset \Omega\setminus \bigcup x_j$, there holds that $u_k \to u$ in $L^{A_n}(K)$.
\end{lema}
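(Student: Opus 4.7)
The plan is to produce a single cutoff that kills every concentration atom simultaneously and then to invoke the Brezis-Lieb identity. The two main inputs are the weak-$*$ decomposition \eqref{5.2} and the finiteness of the index set $I$ already established in Lemma \ref{J is finite}.

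First I would exploit that $K$ is a compact subset of the open set $\Omega\setminus\bigcup_{i\in I}\{x_i\}$, with $I$ finite. Picking $\delta>0$ smaller than the distance from $K$ to $\{x_i\}_{i\in I}$, one constructs $\phi\in C_c^\infty(\Omega)$ with $0\le\phi\le 1$, $\phi\equiv 1$ on $K$, and $\phi\equiv 0$ on $\bigcup_{i\in I} B_{\delta/2}(x_i)$. Pairing the weak-$*$ convergence of measures \eqref{5.2} against $\phi\in C_c(\Omega)$, and using $\phi(x_i)=0$ for every $i$, gives
$$
\lim_{k\to\infty}\int_\Omega \phi\, A_n(|u_k|)\,dx =\int_\Omega \phi\, A_n(|u|)\,dx + \sum_{i\in I}\nu_i\phi(x_i) =\int_\Omega \phi\, A_n(|u|)\,dx.
$$

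Next I would apply the Brezis-Lieb lemma. Since $\{u_k\}$ is bounded in $W_0^{1,A}(\Omega)$, the continuous Orlicz-Sobolev embedding $W_0^{1,A}(\Omega)\hookrightarrow L^{A_n}(\Omega)$ gives boundedness, hence weak convergence $u_k\rightharpoonup u$, in $L^{A_n}(\Omega)$; passing to a further subsequence we may also assume $u_k\to u$ a.e. Thus Lemma \ref{Brezis-Lieb} applies with $B=A_n$ and test function $\phi\in L^\infty(\Omega)$, yielding
$$
\lim_{k\to\infty}\left(\int_\Omega \phi\, A_n(|u_k|)\,dx - \int_\Omega \phi\, A_n(|u_k-u|)\,dx\right) = \int_\Omega \phi\, A_n(|u|)\,dx.
$$
Subtracting the two displayed identities forces $\int_\Omega \phi\, A_n(|u_k-u|)\,dx\to 0$, and since $\phi\equiv 1$ on $K$, this collapses to
$$
\int_K A_n(|u_k-u|)\,dx\to 0.
$$

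To conclude, I would invoke the $\Delta_2$-condition on $A_n$ (proved earlier in this section), so that by Lemma \ref{normayro} modular convergence upgrades to norm convergence $\|u_k-u\|_{L^{A_n}(K)}\to 0$. The only genuine subtlety is that a \emph{single} cutoff $\phi$ must annihilate all atomic masses $\nu_i\delta_{x_i}$ at once; this is possible precisely because $I$ is finite, which is the content of Lemma \ref{J is finite}. All the remaining steps amount to bookkeeping with the concentration-compactness decomposition and the Brezis-Lieb identity.
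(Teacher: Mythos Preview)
The paper does not supply its own proof of this lemma; it is quoted directly from \cite{FIN} without argument. Your proof is correct and entirely self-contained within the tools already developed in the paper: the finiteness of $I$ (Lemma \ref{J is finite}) allows a single cutoff $\phi$ vanishing at every atom, testing the weak-$*$ limit \eqref{5.2} against $\phi$ and subtracting the Brezis--Lieb identity (Lemma \ref{Brezis-Lieb}) with $B=A_n$ forces $\int_K A_n(|u_k-u|)\,dx\to 0$, and the $\Delta_2$-condition on $A_n$ upgrades this to norm convergence via Lemma \ref{normayro}. This is the natural argument and is presumably what \cite{FIN} does as well.

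One small remark: strictly speaking the Brezis--Lieb step and the a.e.\ convergence are available only along a subsequence, but since every subsequence of $\{u_k\}$ admits a further subsequence for which your argument applies (with the same limit $u$), the full sequence converges in $L^{A_n}(K)$.
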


The next series of lemmas leads us to conclude that the limit $u$ of the Palais-Smale sequence is in fact a weak solution to \eqref{aplicacion}.

\begin{lema}\label{conv gradients} For any compact $K \subset \mathbb{R}^n\setminus \bigcup x_j$, we have
\begin{equation}\label{conclu graidents}
\int_K\left(a(|\nabla u_k|)\frac{\nabla u_k}{|\nabla u_k|}-a(|\nabla u|)\frac{\nabla u}{|\nabla u|}\right) (\nabla u_k-\nabla u)\,dx \to 0,
\end{equation}
as $k \to \infty$.
\end{lema}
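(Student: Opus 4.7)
The plan is to adapt the standard monotonicity/cut-off argument used for the $p$-Laplacian. Choose $\phi\in C_c^\infty(\mathbb{R}^n)$ with $0\le \phi\le 1$, $\phi\equiv 1$ on $K$ and $\phi\equiv 0$ on a neighborhood of each concentration point $x_j$ (this is possible because $K$ is compact and, by Lemma \ref{J is finite}, $I$ is finite). Test the Palais-Smale condition $\F_\lambda'(u_k)\to 0$ in $(W^{1,A}_0(\Omega))^*$ against the function $\phi(u_k-u)$, which is uniformly bounded in $W^{1,A}_0(\Omega)$. This gives
$$
\int_\Omega a(|\nabla u_k|)\tfrac{\nabla u_k}{|\nabla u_k|}\nabla\bigl(\phi(u_k-u)\bigr)\,dx = \int_\Omega a_n(|u_k|)\tfrac{u_k}{|u_k|}\phi(u_k-u)\,dx + \lambda\int_\Omega f(u_k)\phi(u_k-u)\,dx + o(1).
$$

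Next I would show that the right-hand side vanishes as $k\to\infty$. The first term is handled by H\"older's inequality in Orlicz spaces: $a_n(|u_k|)$ is bounded in $L^{\tilde A_n}(\Omega)$ (because $\tilde A_n(a_n(s))\le (p_n^+-1)A_n(s)$, analogous to the estimate used in Lemma \ref{J is finite}), and $\phi(u_k-u)\to 0$ in $L^{A_n}(\supp\phi)$ by Lemma \ref{conv en Lp} since $\supp\phi$ is compact and disjoint from the $x_j$'s. The $f$-term vanishes similarly using $u_k\to u$ in $L^B(\Omega)$ (from $B\ll A_n$) together with $f(u_k)$ bounded in $L^{\tilde B}(\Omega)$. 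On the left-hand side, write
$$
\int_\Omega a(|\nabla u_k|)\tfrac{\nabla u_k}{|\nabla u_k|}\nabla\bigl(\phi(u_k-u)\bigr)\,dx = \int_\Omega \phi\, a(|\nabla u_k|)\tfrac{\nabla u_k}{|\nabla u_k|}(\nabla u_k-\nabla u)\,dx + \int_\Omega (u_k-u)\, a(|\nabla u_k|)\tfrac{\nabla u_k}{|\nabla u_k|}\nabla\phi\,dx.
$$
The second piece goes to zero because $a(|\nabla u_k|)\tfrac{\nabla u_k}{|\nabla u_k|}\nabla\phi$ is bounded in $L^{\tilde A}(\Omega)$ (by the same Young-inequality estimate used in Lemma \ref{J is finite}) while $u_k\to u$ in $L^A(\Omega)$ by the compact embedding.

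At this point $\int_\Omega \phi\, a(|\nabla u_k|)\tfrac{\nabla u_k}{|\nabla u_k|}(\nabla u_k-\nabla u)\,dx\to 0$. Subtract the easy term
$$
\int_\Omega \phi\, a(|\nabla u|)\tfrac{\nabla u}{|\nabla u|}(\nabla u_k-\nabla u)\,dx \longrightarrow 0,
$$
which vanishes because $\phi\, a(|\nabla u|)\tfrac{\nabla u}{|\nabla u|}\in L^{\tilde A}(\Omega)$ and $\nabla u_k\rightharpoonup \nabla u$ weakly in $L^A(\Omega)$. The result is
$$
\int_\Omega \phi\, \Bigl(a(|\nabla u_k|)\tfrac{\nabla u_k}{|\nabla u_k|}-a(|\nabla u|)\tfrac{\nabla u}{|\nabla u|}\Bigr)(\nabla u_k-\nabla u)\,dx \longrightarrow 0.
$$
By strict convexity of $A$, the integrand is pointwise nonnegative, and since $\phi\equiv 1$ on $K$ and $\phi\ge 0$ elsewhere, the integral over $K$ is bounded by the integral over $\Omega$, giving \eqref{conclu graidents}.

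The main technical obstacle is verifying that each of the three nonlocal terms really does vanish in the Orlicz setting, where one cannot appeal to Lebesgue H\"older directly; this requires controlling $a(|\nabla u_k|)$ in $L^{\tilde A}$ and $a_n(|u_k|)$, $f(u_k)$ in $L^{\tilde A_n}$, $L^{\tilde B}$ respectively, together with the sharp local strong convergence from Lemma \ref{conv en Lp}. The pointwise monotonicity of the Orlicz vector field $\xi\mapsto a(|\xi|)\xi/|\xi|$, which follows from the convexity of $A$ together with $a>0$ on $(0,\infty)$, is what upgrades the statement from the cut-off $\phi$ to the compact set $K$.
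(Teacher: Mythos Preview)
Your proposal is correct and follows essentially the same approach as the paper: choose a cutoff $\phi$ supported away from the concentration points, test $\F_\lambda'(u_k)\to 0$ against $\phi(u_k-u)$, kill the right-hand side using Lemma \ref{conv en Lp} and boundedness of $a_n(|u_k|)$, $f(u_k)$ in the appropriate dual Orlicz spaces, dispose of the cross term via $u_k\to u$ in $L^A$, subtract the weak-limit term $\int_\Omega \phi\, a(|\nabla u|)\tfrac{\nabla u}{|\nabla u|}(\nabla u_k-\nabla u)\,dx$, and finally use the pointwise monotonicity of $\xi\mapsto a(|\xi|)\xi/|\xi|$ to pass from $\phi$ down to $K$. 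The only cosmetic difference is that the paper bundles $a_n(|u_k|)\tfrac{u_k}{|u_k|}+\lambda f(u_k)$ together as a bounded sequence in $L^{\tilde A_n}$, whereas you treat the $f$-term separately via $L^{\tilde B}$; both are fine.
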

  
\begin{proof}
Let $\varphi \in C_c^{\infty}(\mathbb{R}^n)$ be such that $0 \leq \varphi \leq 1$, $\varphi = 1$ in $K$ and $\text{supp}(\varphi)\cap \left\lbrace x_j\right\rbrace_{j\in J}=\emptyset.$

First, observe that, since $A(t)$ is convex, it follows that $a(|\nabla u|) = A'(|\nabla u|)$ is monotone. That is
$$
\left(a(|\nabla u_k|)\frac{\nabla u_k}{|\nabla u_k|}-a(|\nabla u|)\frac{\nabla u}{|\nabla u|}\right)(\nabla u_k-\nabla u)\geq 0.
$$
See, for instance \cite[Section 9.1]{Evans}.

Hence,  we have
\begin{align*}
0&\leq \int_K\left(a(|\nabla u_k|)\frac{\nabla u_k}{|\nabla u_k|}-a(|\nabla u|)\frac{\nabla u}{|\nabla u|}\right) (\nabla u_k-\nabla u)\,dx \\
&\leq \int_\Omega\left(a(|\nabla u_k|)\frac{\nabla u_k}{|\nabla u_k|}-a(|\nabla u|)\frac{\nabla u}{|\nabla u|}\right) (\nabla u_k-\nabla u)\varphi\,dx\\
&=\int_\Omega a(|\nabla u_k|)\frac{\nabla u_k}{|\nabla u_k|}(\nabla u_k-\nabla u)\varphi\,dx-\int_\Omega a(|\nabla u|)\frac{\nabla u}{|\nabla u|} (\nabla u_k-\nabla u)\varphi\,dx
\end{align*}

First observe that $a(|\nabla u|)\frac{\nabla u}{|\nabla u|}\varphi\in L^{\tilde{A}}$ and, since $\nabla u_k\rightharpoonup \nabla u$ weakly in $L^A(\Omega)$,
$$
\int_\Omega a(|\nabla u|)\frac{\nabla u}{|\nabla u|} (\nabla u_k-\nabla u)\varphi \,dx\to 0.
$$

As $\{(u_k-u)\varphi\}_{k\in\N}$ is bounded in $W^{1,A}_0(\Omega)$, then $\langle\F_\lambda(u_k),(u_k-u)\varphi\rangle\to 0$. Therefore
\begin{align*}
\langle\F_\lambda(u_k),(u_k-u)\varphi\rangle &=\int_\Omega a(|\nabla u_k|)\frac{\nabla u_k}{|\nabla u_k|}\nabla[(u_k-u)\varphi]\,dx-\int_\Omega a_n(|\nabla u_k|)\frac{\nabla u_k}{|\nabla u_k|} (u_k-u)\varphi\,dx\\&-\int_\Omega\lambda f(u_k)(u_k-u)\varphi\,dx
\end{align*}
Using that $ a_n(|\nabla u_k|)\frac{\nabla u_k}{|\nabla u_k|}+\lambda f(u_k)$ is bounded in $L^{\tilde{A}_n}(\Omega)$ and, from Lemma \ref{conv en Lp} $u_k\to u$ in $L^{A_n}(\supp(\varphi))$, we have
$$
\int_\Omega \left(a_n(|\nabla u_k|)\frac{\nabla u_k}{|\nabla u_k|}+\lambda f(u_k)\right) (u_k-u) \varphi\,dx\to 0
$$
Then
$$
\int_\Omega a(|\nabla u_k|)\frac{\nabla u_k}{|\nabla u_k|}\nabla[( u_k- u)\varphi]\,dx\to 0.
$$
But,
$$
\left|\int_\Omega a(|\nabla u_k|)\frac{\nabla u_k}{|\nabla u_k|}\nabla\varphi (u_k-u)\,dx\right|\leq \left\|a(|\nabla u_k|)\frac{\nabla u_k}{|\nabla u_k|}\right\|_{\tilde{A}}\|u_k-u\|_A \|\nabla\varphi\|_\infty\to 0,
$$
so
$$
\int_\Omega a(|\nabla u_k|)\frac{\nabla u_k}{|\nabla u_k|}(\nabla u_k- \nabla u)\varphi\,dx\to 0,
$$
as we wanted to show.
\end{proof}

Now, arguing as in \cite{FIN} we obtain
\begin{corol}[\cite{FIN}, Corollary 4.6] \label{puntual}
Let $\{u_k\}_{k\in\N}$ be a Palais-Smale sequence of $\F_\lambda$.  There exists a subsequence $\{u_{k_j}\}_{j\in\N}\subset \{u_k\}_{k\in\N}$ such that $\nabla u_{k_j}\to\nabla u$ a.e in $\Omega$.
\end{corol}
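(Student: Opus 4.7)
The plan is to combine Lemma \ref{conv gradients} with a diagonal extraction and the strict monotonicity of the vector field $V(\xi):=a(|\xi|)\xi/|\xi|$ (with $V(0):=0$). Since by Lemma \ref{J is finite} the set $\{x_j\}_{j\in I}$ is finite, it has Lebesgue measure zero, so a.e.\ convergence on $\Omega\setminus\bigcup_{j\in I}\{x_j\}$ is equivalent to a.e.\ convergence on $\Omega$.

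First I would exhaust the open set $\Omega\setminus\bigcup_{j}\{x_j\}$ by an increasing sequence of compact subsets $K_1\subset K_2\subset\cdots$. On each $K_m$, Lemma \ref{conv gradients} asserts that the nonnegative integrand
$$
g_k(x) := \bigl(V(\nabla u_k(x))-V(\nabla u(x))\bigr)\cdot\bigl(\nabla u_k(x)-\nabla u(x)\bigr)
$$
tends to $0$ in $L^1(K_m)$. Therefore some subsequence converges to $0$ a.e.\ on $K_m$. Extracting such subsequences successively for $m=1,2,\dots$ and passing to a diagonal subsequence $\{u_{k_j}\}_{j\in\N}$, I obtain $g_{k_j}(x)\to 0$ for a.e.\ $x\in\Omega$.

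Next I would convert the pointwise vanishing of $g_{k_j}$ into pointwise convergence of the gradients via strict monotonicity of $V$. Since $a>0$ on $(0,\infty)$, the function $A$ is strictly convex, so $V$ is strictly monotone: $(V(\xi)-V(\eta))\cdot(\xi-\eta)>0$ whenever $\xi\neq\eta$. Using the two-sided bound $p^-\le ta(t)/A(t)\le p^+$ one also obtains a coercive quantitative version of this inequality---an Orlicz analogue of Simon's inequality for the $p$-Laplacian---which both prevents blow-up of $\{\nabla u_{k_j}(x)\}$ and forces every cluster point to equal $\nabla u(x)$. This yields $\nabla u_{k_j}(x)\to\nabla u(x)$ for almost every $x$.

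The main obstacle I anticipate is precisely this quantitative strict monotonicity step: producing an inequality that bounds $|\xi-\eta|$ from above by a suitable function of $(V(\xi)-V(\eta))\cdot(\xi-\eta)$, uniformly for all $\xi,\eta\in\R^n$. Such an inequality in the Orlicz setting is available in \cite{FIN}, on which the present corollary is directly modeled, so invoking it closes the argument.
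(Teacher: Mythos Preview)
Your proposal is correct and matches the paper's approach: the paper does not give its own proof here but simply defers to \cite{FIN} (``arguing as in \cite{FIN} we obtain''), and your outline---diagonal extraction over an exhaustion of $\Omega\setminus\{x_j\}$ by compacta, $L^1$ convergence of the nonnegative integrand from Lemma \ref{conv gradients}, and strict monotonicity of $V$ to pass from $g_{k_j}(x)\to 0$ to $\nabla u_{k_j}(x)\to\nabla u(x)$---is precisely the standard argument that \cite{FIN} carries out. Nothing further is needed.
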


All of the above lemmas, can be put into a single statement.
\begin{lema}\label{convergencia}
Given $\{u_k\}_{k\in\N}\subset W^{1,A}_0(\Omega)$ a Palais-Smale sequence of $\F_\lambda$, there exists a subsequence $\{u_{k_j}\}_{j\in\N}\subset \{u_k\}_{k\in\N}$ such that
\begin{align}\label{limita}
\lim_{j\to\infty} \int_\Omega a(|\nabla u_{k_j}|)\frac{\nabla u_{k_j}}{|\nabla u_{k_j}|}\nabla\varphi\,dx &=\int_\Omega a(|\nabla u|)\frac{\nabla u}{|\nabla u|}\nabla\varphi\,dx\\
\label{limitan}
\lim_{j\to\infty}  \int_\Omega a_n(|u_{k_j}|)\frac{u_{k_j}}{|u_{k_j}|}\varphi\,dx &= \int_\Omega a_n(|u|)\frac{u}{|u|}\varphi\,dx  \\
\label{limitf}
\lim_{j\to\infty}  \int_\Omega f(u_{k_j})\varphi\,dx &=  \int_\Omega f(u)\varphi \,dx
\end{align}
for any $\varphi\in C^\infty_c(\Omega)$.
\end{lema}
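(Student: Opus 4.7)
The strategy is to deduce all three limits from a single template: establish a.e.\ convergence of each nonlinear quantity (by composing a.e.\ convergence of the arguments with continuity of $a$, $a_n$, $f$), show boundedness of that quantity in the dual Orlicz space in which the natural test functions live, and then use reflexivity to upgrade a.e.\ convergence to weak convergence and pair against $\varphi$.

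For \eqref{limita}, I would begin by invoking Corollary \ref{puntual} to fix a subsequence along which $\nabla u_{k_j}\to\nabla u$ a.e.\ in $\Omega$; continuity of $a$ immediately yields
$$
a(|\nabla u_{k_j}|)\frac{\nabla u_{k_j}}{|\nabla u_{k_j}|}\longrightarrow a(|\nabla u|)\frac{\nabla u}{|\nabla u|}\quad\text{a.e.}
$$
The estimate $\tilde A(a(s))=s\,a(s)-A(s)\le(p^+-1)A(s)$, already exploited in the proof of Lemma \ref{J is finite}, shows that the sequence is bounded in $L^{\tilde A}(\Omega)^n$, a reflexive space because $A$ satisfies \eqref{delta}. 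Reflexivity combined with a.e.\ convergence identifies the weak-$L^{\tilde A}$ limit as $a(|\nabla u|)\nabla u/|\nabla u|$; pairing with $\nabla\varphi\in L^A(\Omega)^n$ then gives \eqref{limita}.

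The remaining two limits follow along the same lines. For \eqref{limitan}, the compact embedding $W^{1,A}_0(\Omega)\hookrightarrow L^A(\Omega)$ lets me refine the subsequence so that $u_{k_j}\to u$ a.e., whence $a_n(|u_{k_j}|)u_{k_j}/|u_{k_j}|\to a_n(|u|)u/|u|$ pointwise; the Young-inequality bound $\tilde A_n(a_n(s))\le(p_n^+-1)A_n(s)$ shows boundedness in $L^{\tilde A_n}(\Omega)$, which is reflexive thanks to the $\Delta_2$ property of $A_n$ proved in Section~3, and pairing with $\varphi\in L^{A_n}(\Omega)$ yields \eqref{limitan}. For \eqref{limitf}, I combine the Ambrosetti--Rabinowitz condition $f(t)t\le\gamma F(t)$, the growth $|F(t)|\le B(t)$, and the lower estimate $r^-\le tb(t)/B(t)$ (equivalently $B(t)/t\le b(t)/r^-$) to get $|f(t)|\le (\gamma/r^-)\,b(|t|)$. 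Hence $\tilde B(f(u_k))\le (r^+-1)B(|u_k|)$, which is bounded in $L^1(\Omega)$ because of the compact embedding $W^{1,A}_0(\Omega)\hookrightarrow L^B(\Omega)$ (since $B\ll A_n$). Reflexivity of $L^{\tilde B}(\Omega)$, continuity of $f$, and the a.e.\ convergence of $u_{k_j}$ give weak convergence $f(u_{k_j})\rightharpoonup f(u)$ in $L^{\tilde B}$, and pairing with $\varphi\in L^B(\Omega)$ produces \eqref{limitf}.

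The only genuinely delicate point is the auxiliary principle underlying all three arguments: in a reflexive Orlicz space, a norm-bounded sequence that converges a.e.\ in fact converges weakly to its pointwise limit. This is a classical consequence of Egorov's theorem together with uniqueness of weak limits along arbitrary weakly convergent subsequences, and it rests on both the Young function and its conjugate satisfying $\Delta_2$—a property which holds for $A$ and $\tilde A$ by \eqref{delta}, for $A_n$ and $\tilde A_n$ by the result of Section~3, and for $B$ and $\tilde B$ by the hypothesis $r^-\le tb(t)/B(t)\le r^+$ of Theorem \ref{r>p}. Once this lemma is stated and invoked, the rest of the proof is bookkeeping.
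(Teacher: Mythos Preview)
Your proposal is correct and follows essentially the same approach as the paper: boundedness in the appropriate dual Orlicz space together with a.e.\ convergence (via Corollary \ref{puntual} for the gradients and via compact embedding for $u_{k_j}$) forces weak convergence to the pointwise limit, which is then paired with the test function. In fact you spell out more than the paper does---the paper treats only \eqref{limita} in detail and dismisses \eqref{limitan}, \eqref{limitf} with ``follows in much the same way,'' whereas you supply the relevant growth bounds and explicitly isolate the a.e.-plus-bounded-implies-weak principle on which all three limits rest.
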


\begin{proof}
We start by proving \eqref{limita}. As $a(|\nabla u_k|)\frac{\nabla u_k}{|\nabla u_k|}$ is bounded in $L^{\tilde{A}}(\Omega)$,  then there exists $w\in \left(L^{\tilde A}(\Omega)\right)^n$ such that, up to a subsequence, $a(|\nabla u_k|)\frac{\nabla u_k}{|\nabla u_k|}\rightharpoonup w$. Moreover, by Corollary \ref{puntual}, passing eventually to a further subsequence,
$$
a(|\nabla u_k|)\frac{\nabla u_k}{|\nabla u_k|}\to a(|\nabla u|)\frac{\nabla u}{|\nabla u|}\qquad\mbox{a.e in }\Omega.
$$ 
Then, 
$$
a(|\nabla u_k|)\frac{\nabla u_k}{|\nabla u_k|}\rightharpoonup a(|\nabla u|)\frac{\nabla u}{|\nabla u|},\quad \text{weakly in } \left(L^{\tilde A}(\Omega)\right)^n,
$$
as we wanted to prove. The proof of \eqref{limitan} and \eqref{limitf}, follows in much the same way.
\end{proof}
  
\begin{corol}\label{u.sol}
If $u$ is the limit of a Palais-Smale sequence, then $u$ is a weak solution to \eqref{aplicacion}.
\end{corol}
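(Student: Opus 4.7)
The plan is to combine the Palais--Smale condition with the term-by-term convergence provided by Lemma \ref{convergencia}, so essentially no new work is required; Corollary \ref{u.sol} is a direct consequence of what has already been established.

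First I would recall that, by definition, $u\in W^{1,A}_0(\Omega)$ is a weak solution of \eqref{aplicacion} when
$$
\int_\Omega a(|\nabla u|)\frac{\nabla u}{|\nabla u|}\nabla\varphi\,dx-\int_\Omega a_n(|u|)\frac{u}{|u|}\varphi\,dx-\lambda\int_\Omega f(u)\varphi\,dx=0
$$
for every test function $\varphi\in C^\infty_c(\Omega)$ (the extension to $\varphi\in W^{1,A}_0(\Omega)$ then follows by density, since $C^\infty_c(\Omega)$ is dense in $W^{1,A}_0(\Omega)$ thanks to the $\Delta_2$-condition on $A$).

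Next I would use the Palais--Smale hypothesis: $\F'_\lambda(u_k)\to 0$ in the dual $(W^{1,A}_0(\Omega))'$. Pairing with any fixed $\varphi\in C^\infty_c(\Omega)$ gives
$$
\int_\Omega a(|\nabla u_k|)\frac{\nabla u_k}{|\nabla u_k|}\nabla\varphi\,dx-\int_\Omega a_n(|u_k|)\frac{u_k}{|u_k|}\varphi\,dx-\lambda\int_\Omega f(u_k)\varphi\,dx=\langle \F'_\lambda(u_k),\varphi\rangle\to 0.
$$
Now I would pass along the subsequence $\{u_{k_j}\}$ supplied by Lemma \ref{convergencia}, which yields precisely the three convergences \eqref{limita}, \eqref{limitan} and \eqref{limitf}. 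Passing to the limit in each term separately gives the weak formulation for $u$ against the test function $\varphi$.

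Since there is no genuine obstacle here, the only thing to verify carefully is that the subsequence extraction does not interfere with the conclusion: the limit $u$ is fixed (it is the weak $W^{1,A}_0$-limit of the original Palais--Smale sequence), and the identity we want is an equality of integrals depending only on $u$ and $\varphi$, so once it holds along any subsequence it holds for $u$. Hence, as $\varphi\in C^\infty_c(\Omega)$ was arbitrary, $u$ is a weak solution to \eqref{aplicacion}, and the corollary follows.
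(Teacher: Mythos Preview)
Your proof is correct and follows exactly the same approach as the paper: write $\langle \F'_\lambda(u_k),\varphi\rangle$ as the sum of the three integrals, use the Palais--Smale condition to see this tends to $0$, and invoke Lemma \ref{convergencia} to pass to the limit in each term. Your additional remarks about density and the subsequence issue are valid refinements but do not change the argument.
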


\begin{proof}
Let $\{u_k\}$ be a Palais-Smale sequence and $\varphi\in C^\infty(\Omega)$, then
$$
\langle \F'_\lambda(u_k), \varphi\rangle=\int_\Omega a(|\nabla u_k|)\frac{\nabla u_k}{|\nabla u_k|}\varphi\,dx-\int_\Omega a_n(|u_k|)\frac{u_k}{|u_k|}\varphi\,dx- \lambda\int_\Omega f(u_k)\varphi\,dx.
$$
Taking limit, by Lemma \ref{convergencia}, we obtain that
$$
\int_\Omega a(|\nabla u|)\frac{\nabla u}{|\nabla u|}\varphi\,dx-\int_\Omega a_n(|u|)\frac{u}{|u|}\varphi\,dx- \lambda\int_\Omega f(u)\varphi\,dx=0
$$
As we wanted to proof.
\end{proof}

In order to finish the proof of Theorem \ref{r>p}, it remains to see that the function $u$ that is the limit of a Palais-Smale sequence is nontrivial.

For that purpose, we need an asymptotic behavior of the Mountain Pass energies as $\lambda\to \infty$.
\begin{lema}\label{cl0}
Let $c_\lambda$ be the mountain pass energy level of $\F_\lambda$ given by \eqref{PS-energy}. Then
$$
\lim_{\lambda\to\infty} c_\lambda = 0.
$$
\end{lema}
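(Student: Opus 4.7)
The plan is to bound $c_\lambda$ above by the maximum of $\F_\lambda$ along a single, $\lambda$-independent straight-line path from $0$ to a suitable $u_0$ with $\F_\lambda(u_0)<0$, and then to show this maximum tends to $0$ as $\lambda\to\infty$.

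Fix a nonnegative $v\in C^\infty_c(\Omega)$ with $v\not\equiv 0$. Applying Lemma \ref{sacar} to both $A$ and $A_n$, for every $t\geq 1$ one has
\begin{equation*}
\F_0(tv)=\int_\Omega A(t|\nabla v|)-A_n(tv)\,dx\leq t^{p^+}\!\!\int_\Omega A(|\nabla v|)\,dx-t^{p_n^-}\!\!\int_\Omega A_n(v)\,dx,
\end{equation*}
which tends to $-\infty$ as $t\to\infty$ since $p^+<p_n^-$. Using that $F\geq 0$ (the natural sign convention accompanying the Ambrosetti--Rabinowitz hypothesis) to guarantee $\F_\lambda(tv)\leq \F_0(tv)$ uniformly in $\lambda\geq 0$, I select $T>1$, independent of $\lambda$, with $\F_\lambda(Tv)\leq -1$ for all $\lambda\geq 0$. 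Then $u_0:=Tv$ and the path $\gamma(s):=sTv$, $s\in[0,1]$, are admissible competitors in $\Gamma$, so
\begin{equation*}
0\leq c_\lambda\leq m_\lambda:=\max_{t\in[0,T]} g_\lambda(t),\qquad g_\lambda(t):=\F_\lambda(tv).
\end{equation*}
The lower bound is automatic, since any continuous $\gamma$ with $\gamma(0)=0$ satisfies $\max \F_\lambda\circ\gamma\geq \F_\lambda(0)=0$.

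It remains to show $m_\lambda\to 0$. Let $t_\lambda\in[0,T]$ realize the maximum of $g_\lambda$. Given any sequence $\lambda_k\to\infty$, extract a subsequence with $t_{\lambda_k}\to t_*\in[0,T]$. If $t_*>0$, then $F(t_{\lambda_k}v)\to F(t_* v)$ a.e.\ and, since $F>0$ on $(0,\infty)$ (under the usual $f>0$ on positive reals), $\int_\Omega F(t_* v)\,dx>0$; hence
\begin{equation*}
m_{\lambda_k}=g_{\lambda_k}(t_{\lambda_k})\leq \int_\Omega A(t_{\lambda_k}|\nabla v|)\,dx-\lambda_k\int_\Omega F(t_{\lambda_k}v)\,dx\to -\infty,
\end{equation*}
contradicting $m_{\lambda_k}\geq 0$. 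Therefore $t_*=0$, and using $F\geq 0$ together with Lemma \ref{sacar},
\begin{equation*}
0\leq m_{\lambda_k}\leq \int_\Omega A(t_{\lambda_k}|\nabla v|)\,dx\leq t_{\lambda_k}^{p^-}\!\!\int_\Omega A(|\nabla v|)\,dx\longrightarrow 0.
\end{equation*}
Since the subsequential limit is $0$ in every case, $m_\lambda\to 0$, and consequently $c_\lambda\to 0$.

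The main obstacle is the sign information on $F$, used twice: to force $\F_\lambda(Tv)\leq \F_0(Tv)$ for the upper bound, and to obtain $\int F(t_* v)\,dx>0$ in the contradiction argument. Both are transparent once one assumes $f$ is positive on positive reals, which is the standard framework for Ambrosetti--Rabinowitz type results. Without that sign hypothesis, the straight-line path would have to be replaced by a path concentrated where $F>0$, and the maximizer $t_\lambda$ analyzed on the relatively open set $\{t:\int_\Omega F(tv)\,dx>0\}$.
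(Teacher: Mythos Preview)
Your proof is correct and follows essentially the same approach as the paper: fix a single nonnegative test function, bound $c_\lambda$ by the maximum of $\F_\lambda$ along the ray through it, show by contradiction that the maximizer $t_\lambda\to 0$ using the $-\lambda\int F$ term, and conclude via Lemma~\ref{sacar}. Your version is in fact slightly more careful than the paper's, since you explicitly construct a $\lambda$-independent endpoint $u_0=Tv$ to make the path admissible in $\Gamma$ and you flag the positivity assumption on $F$ that the paper uses without comment.
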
  

\begin{proof}
Let $u_0\in C^\infty_c(\Omega)$ be such that $u_0\ge 0$ and $u_0\neq 0$.
Observe that
\begin{equation}\label{cl1}
0\le c_\lambda \le \max_{t>0} \F_\lambda(t u_0).
\end{equation}
Now, If $t\ge 1$,
$$
\F_\lambda(t u_0)\le t^{p^+} \int_\Omega A(|\nabla u_0|)\, dx - t^{p_n^-} \int_\Omega A_n(|u_0|)\, dx.
$$
Hence, there exists $T_0\ge 1$, independent of $\lambda$, such that $\F_\lambda(t u_0)$ attains a maximum in some $t_\lambda\in (0,T_0)$.

We claim that $t_\lambda\to 0$ as $\lambda\to\infty$. In fact, if not, there exists $\lambda_j\to\infty$ and $\delta>0$ such that
$$
0<\delta\le t_{\lambda_j}\le T_0.
$$
But then
$$
\F_{\lambda_j}(t_{\lambda_j} u_0)\le T_0^{p^+} \int_\Omega A(|\nabla u_0|)\, dx - \lambda_j \min_{t\in [\delta, T_0]}\int_\Omega F(t|u_0|)\, dx\to -\infty
$$
as $j\to\infty$, a contradiction. Hence, $t_\lambda\to 0$, and using \eqref{cl1} we get
$$
0\le c_\lambda\le \F_\lambda(t_\lambda u_0)\le t_\lambda^{p^-} \int_\Omega A(|\nabla u_0|)\, dx - t_\lambda^{p_n^+} \int_\Omega A_n(|u_0|)\, dx \to 0
$$
as $\lambda\to\infty$.
\end{proof}

Next, we show a lemma that says that if the energy level is small, then the limit of any Palais-Smale sequence is nontrivial.

\begin{lema}\label{cl<c}
Let $\{u_k\}_{k\in\N}$ be a Palais-Smale sequence for $\F_\lambda$. Then, there exists $c>0$ such that if $c_\lambda<c$ and $u_k\rightharpoonup u$ weakly in $W^{1,A}_0(\Omega)$, then $u\neq 0$.
\end{lema}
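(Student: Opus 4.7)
The plan is to argue by contradiction: suppose the weak limit $u$ of the Palais--Smale sequence is zero, and show that $c_\lambda$ must then exceed a fixed positive threshold depending only on $A$, $n$, $\gamma$ and $S_A$. The natural candidate is
\[
c := \Bigl(\tfrac{p_n^-}{\gamma}-1\Bigr)\nu_0,
\]
where $\nu_0>0$ is the uniform lower bound on the concentration weights that is already isolated in the proof of Lemma~\ref{J is finite}.

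First, I would apply Theorem~\ref{ccp} with $u=0$ to produce the measures $\mu,\nu$ concentrated at points $\{x_i\}_{i\in I}$ with weights $\mu_i,\nu_i$ satisfying \eqref{relacion}. Testing $\F'_\lambda(u_k)\to 0$ against the cutoff $\phi_{\varepsilon,i}u_k$ and repeating verbatim the argument of Lemma~\ref{J is finite}---where the hypothesis $B\ll A_n$ and the resulting compact embedding $W^{1,A}_0(\Omega)\hookrightarrow L^B(\Omega)$ ensure that the $\lambda\int_\Omega B(|u|)\phi_\varepsilon\,dx$ contribution vanishes as $\varepsilon\to 0$---yields the inequality $p^-\mu_i\le p_n^+\nu_i$. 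Combining this with \eqref{relacion} and the comparison $A_\infty\ll M_n$ from Lemma~\ref{AinftyMn} then forces a $\lambda$-independent lower bound $\nu_i\ge\nu_0>0$: if some $\nu_i$ were arbitrarily small, then $\mu_i\le(p_n^+/p^-)\nu_i$ would drive $A_\infty^{-1}(1/\mu_i)\to\infty$, and \eqref{relacion} combined with $M_n(t)/A_\infty(t)\to\infty$ as $t\to\infty$ would clash.

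To turn this into a bound on $c_\lambda$, I would use the identity
\[
c_\lambda=\lim_{k\to\infty}\Bigl[\F_\lambda(u_k)-\tfrac{1}{\gamma}\langle\F'_\lambda(u_k),u_k\rangle\Bigr]
\]
(valid because $\langle\F'_\lambda(u_k),u_k\rangle\to 0$). The pointwise bounds $a(s)s\le p^+A(s)$, $a_n(s)s\ge p_n^-A_n(s)$, together with the Ambrosetti--Rabinowitz inequality $f(t)t\le\gamma F(t)$, give that the integrand is bounded below by
\[
\Bigl(1-\tfrac{p^+}{\gamma}\Bigr)A(|\nabla u_k|)+\Bigl(\tfrac{p_n^-}{\gamma}-1\Bigr)A_n(|u_k|)+\lambda\Bigl(\tfrac{f(u_k)u_k}{\gamma}-F(u_k)\Bigr),
\]
with both leading coefficients strictly positive since $p^+<\gamma<p_n^-$. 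The $\lambda$-integral vanishes in the limit because $|F(t)|$ and $|f(t)t|/\gamma$ are dominated by $CB(|t|)$ while $u_k\to 0$ in $L^B(\Omega)$ by compactness. Passing to the limit, and using that the weak-$*$ limit $\nu$ places mass $\nu_i$ at each $x_i$, gives
\[
c_\lambda\ge\Bigl(\tfrac{p_n^-}{\gamma}-1\Bigr)\sum_{i\in I}\nu_i.
\]

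If $I\neq\emptyset$, the right-hand side is at least $(p_n^-/\gamma-1)\nu_0=c$, as required. If on the other hand $I=\emptyset$, the very same manipulation applied directly to $\langle\F'_\lambda(u_k),u_k\rangle\to 0$ forces $\int_\Omega A(|\nabla u_k|)\,dx\to 0$, so $u_k\to 0$ strongly in $W^{1,A}_0(\Omega)$ and $c_\lambda=\lim\F_\lambda(u_k)=0$, contradicting the strict positivity of the mountain-pass level guaranteed by Lemma~\ref{I satisfies MPL}. Either way, $u=0$ is incompatible with $c_\lambda<c$, which is the desired contrapositive. The main obstacle is the first step, namely extracting a threshold $\nu_0$ that is genuinely independent of $\lambda$ and of the particular Palais--Smale sequence: this hinges on the essential-largeness $A_\infty\ll M_n$ supplied by Lemma~\ref{AinftyMn}, which is precisely where the refined estimate \eqref{relacion} produced by Theorem~\ref{ccp}---sharper than the analogue in \cite{FIN}---becomes indispensable.
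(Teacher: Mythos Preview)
Your argument is correct, but it takes a genuinely different route from the paper's own proof. The paper does \emph{not} invoke Theorem~\ref{ccp} here at all. Instead it argues as follows: assuming $u=0$, the Palais--Smale relation $\langle\F_\lambda'(u_k),u_k\rangle\to 0$ together with $\int f(u_k)u_k\to 0$ forces
\[
m:=\lim_{k\to\infty}\int_\Omega a(|\nabla u_k|)|\nabla u_k|\,dx=\lim_{k\to\infty}\int_\Omega a_n(|u_k|)|u_k|\,dx,
\]
and $c_\lambda>0$ gives $m>0$. The Sobolev inequality \eqref{SA.def} combined with the norm--modular comparison of Lemma~\ref{normayro} then yields a $\lambda$-independent lower bound $m\ge M(S_A,p^\pm,p_n^\pm)$, and one concludes via $c_\lambda\ge(\tfrac{1}{p^+}-\tfrac{1}{p_n^-})m$.

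Your approach instead routes through the concentration--compactness machinery: you decompose $\nu$ into atoms, reuse the computation from Lemma~\ref{J is finite} to obtain $\nu_i\ge\nu_0$, and bound $c_\lambda$ from below via the combination $\F_\lambda-\tfrac{1}{\gamma}\langle\F_\lambda',\cdot\rangle$, handling the case $I=\emptyset$ separately. This is perfectly sound; it has the virtue of making the concentration/vanishing dichotomy explicit and of actually exercising the main theorem of the paper, whereas the paper's argument is more elementary and self-contained (just Sobolev plus algebra). The threshold constants differ accordingly: yours is $(\tfrac{p_n^-}{\gamma}-1)\nu_0$, theirs is $(\tfrac{1}{p^+}-\tfrac{1}{p_n^-})M$. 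Note that your treatment of the $\lambda$-term (claiming $|f(t)t|\le CB(|t|)$) relies on the same implicit sign assumption on $f(t)t$ that the paper also uses without comment.
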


\begin{proof}
Assume by contradiction that $u=0$. Now since $\{u_k\}_{k\in\N}$ is bounded in $W^{1,A}_0(\Omega)$ it follows that each of the integrals in
$$
\langle \F'_\lambda(u_k), u_k\rangle = \int_\Omega a(|\nabla u_k|)|\nabla u_k|\, dx - \int_\Omega a_n(|u_k|)|u_k|\, dx - \lambda \int_\Omega f(u_k)u_k\, dx 
$$
is bounded. 

Also, observe that 
$$
\langle \F'_\lambda(u_k), u_k\rangle\to 0 \quad\text{and}\quad \int_\Omega |f(u_k)| |u_k|\, dx \le \gamma \int_\Omega |F(u_k)|\, dx \le \gamma \int_\Omega B(|u_k|)\, dx \to 0
$$
as $k\to\infty$ (for the second term we are using our assumption that $u=0$).

Hence, passing to a subsequence if necessary, we get that
$$
\lim_{k\to\infty} \int_\Omega a(|\nabla u_k|)|\nabla u_k|\, dx = \lim_{k\to\infty} \int_\Omega a_n(|u_k|)|u_k|\, dx =: m
$$
Next, observe that
$$
\F_\lambda(u_k) \le \frac{1}{p^-}\int_\Omega a(|\nabla u_k|)|\nabla u_k|\, dx - \lambda \int_\Omega F(|u_k|)\, dx,
$$
hence
$$
0<c_\lambda  =\lim_{k\to\infty} \F_\lambda(u_k)\le \frac{m}{p^-},
$$
from where it follows that $m>0$. Now we want to give a lower bound for $m$ independent of $\lambda$. In fact,
\begin{align*}
\|\nabla u_k\|_A &\le \max\left\{\left(\int_\Omega A(|\nabla u_k|)\, dx\right)^{1/p^+}, \left(\int_\Omega A(|\nabla u_k|)\, dx\right)^{1/p^-}\right\}\\
&\le \max\left\{\left(\frac{1}{p^-}\int_\Omega a(|\nabla u_k|)|\nabla u_k|\, dx\right)^{1/p^+}, \left(\frac{1}{p^-}\int_\Omega a(|\nabla u_k|)|\nabla u_k|\, dx\right)^{1/p^-}\right\}.
\end{align*}
Hence
\begin{equation}\label{m1}
\limsup_{k\to\infty} \|\nabla u_k\|_A \le \max\left\{\left(\frac{m}{p^-}\right)^{1/p^+},\left(\frac{m}{p^-}\right)^{1/p^-}\right\}. 
\end{equation}
Now, we argue analogously,
\begin{align*}
\|u_k\|_{A_n} &\ge \min\left\{\left(\int_\Omega A_n(|u_k|)\, dx\right)^{1/p_n^+},\left( \int_\Omega A_n(|u_k|)\, dx\right)^{1/p_n^-}\right\}\\ 
&\ge \min\left\{\left(\frac{1}{p_n^+}\int_\Omega a_n(|u_k|)|u_k|\, dx\right)^{1/p_n^+}, \left(\frac{1}{p_n^+}\int_\Omega a_n(|u_k|)|u_k|\, dx\right)^{1/p_n^-}\right\}.
\end{align*}
Hence
\begin{equation}\label{m2}
\liminf_{k\to\infty} \|u_k\|_{A_n} \ge \min\left\{\left(\frac{m}{p_n^+}\right)^{1/p_n^+}, \left(\frac{m}{p_n^+}\right)^{1/p_n^-}\right\}.
\end{equation}

So, using \eqref{m1}, \eqref{m2} together with the Sobolev inequality \eqref{SA.def}, we arrive at
$$
S_A\min\left\{\left(\frac{m}{p_n^+}\right)^{1/p_n^+}, \left(\frac{m}{p_n^+}\right)^{1/p_n^-}\right\}\le \max\left\{\left(\frac{m}{p^-}\right)^{1/p^+},\left(\frac{m}{p^-}\right)^{1/p^-}\right\}.
$$
Therefore, since $m>0$, there exists a constant $M>0$ depending on $S_A$, $p^\pm$ and $p_n^\pm$ such that $m\ge M$.

Finally, observe that
$$
\F_\lambda(u_k) \ge \frac{1}{p^+}\int_\Omega a(|\nabla u_k|)|\nabla u_k|\, dx - \frac{1}{p_n^-} \int_\Omega a_n(|u_k|)|u_k|\, dx -\lambda \int_\Omega F(|u_k|)\, dx.
$$
Passing to the limit as $k\to\infty$, we obtain that
$$
c_\lambda\ge \left(\frac{1}{p^+}-\frac{1}{p_n^-}\right) m \ge \left(\frac{1}{p^+}-\frac{1}{p_n^-}\right) M.
$$
So, denoting $c=\left(\frac{1}{p^+}-\frac{1}{p_n^-}\right) M$, if $c_\lambda<c$ we arrive at a contradiction, and hence $u\neq0$.
\end{proof}

Now we are in position to finish with the proof of Theorem \ref{r>p}.

\begin{proof}[Proof of Theorem \ref{r>p}]
The proof is now just a combination of Lemma \ref{I satisfies MPL}, Corollary \ref{u.sol} and Lemmas \ref{cl0} and \ref{cl<c}.
\end{proof}

\section*{Acknowledgements}
A.S. is partially supported by ANPCyT under grants PICT 2017-0704, PICT 2019-3837 and by Universidad Nacional de San Luis under grants PROIPRO 03-2420.

J.F.B. is partially supported by UBACYT Prog. 2018 20020170100445BA and by ANPCyT PICT 2019-00985. 

Both authors are members of CONICET and are grateful for the support.

\bibliography{biblio}
\bibliographystyle{plain}

\end{document}